\theoremstyle{plain}
\newtheorem{theorem}{Theorem}[section]
\newtheorem{lemma}[theorem]{Lemma}
\newtheorem{proposition}[theorem]{Proposition}
\theoremstyle{definition}
\newtheorem{definition}[theorem]{Definition}
\newtheorem{remark}[theorem]{Remark}
\newtheorem{remarks}[theorem]{Remarks}
\newtheorem{example}[theorem]{Example}
\newcommand\bA{{\mathbb A}}
\newcommand\bG{{\mathbb G}}
\newcommand\bP{{\mathbb P}}
\newcommand\bZ{{\mathbb Z}}
\newcommand\cC{{\mathcal C}}
\newcommand\cO{{\mathcal O}}
\newcommand\fp{\mathfrak{p}}
\newcommand\fm{\mathfrak{m}}
\newcommand\rN{{\rm N}}
\newcommand\wG{\widehat{G}}
\newcommand\wH{\widehat{H}}
\newcommand\wT{\widehat{T}}
\newcommand\wU{\widehat{U}}
\renewcommand\char{{\rm char}}
\renewcommand\deg{{\rm deg}}
\renewcommand\div{{\rm div}}
\newcommand\et{\rm{\acute{e}t}}
\newcommand\gp{{\rm gp}}
\newcommand\id{{\rm id}}
\newcommand\red{{\rm red}}
\newcommand\Aut{{\rm Aut}}
\newcommand\Br{{\rm Br}}
\newcommand\Coker{{\rm Coker}}
\newcommand\Hom{{\rm Hom}}
\newcommand\Ker{{\rm Ker}}
\newcommand\LPic{{\rm LPic}}
\newcommand\Pic{{\rm Pic}}
\newcommand\Spec{{\rm Spec}}
\newcommand\Zar{{\rm Zar}}
\title{On linearization of line bundles}
\author{Michel Brion}
\date{}
\begin{document}

\maketitle
 
\begin{abstract}
We study the linearization of line bundles and the local properties
of actions of connected linear algebraic groups, in the setting of 
seminormal varieties. We show that several classical results on
normal varieties extend to that setting, if the Zariski topology 
is replaced with the \'etale topology.
\end{abstract}

\section{Introduction}
\label{sec:int}

Linearization of line bundles in the presence of algebraic 
group actions is a basic notion of geometric invariant theory; 
it also has applications to the local properties of such actions. 
For example, given an action of a connected linear algebraic 
group $G$ on a normal variety $X$ over a field $k$, and a line 
bundle $L$ on $X$, some positive power $L^{\otimes n}$ admits a 
$G$-linearization (as shown by Mumford \cite[Cor.~1.6]{Mumford} 
when $X$ is proper, and by Sumihiro \cite[Thm.~1.6]{Sumihiro-II} 
in a more general setting of group schemes; when $k$ is 
algebraically closed of characteristic $0$, we may take for $n$ 
the order of the Picard group of $G$ as shown by Knop,
Kraft, Luna and Vust \cite[2.4]{KKLV}). It can be inferred
that $X$ is covered by $G$-stable Zariski open subsets $U_i$, 
equivariantly isomorphic to $G$-stable subvarieties of 
projectivizations of finite-dimensional $G$-modules; 
if $G$ is a split torus, then the $U_i$ may be taken affine
(see \cite[Cor.~2]{Sumihiro}, 
\cite[Thm.~3.8, Cor.~3.11]{Sumihiro-II}, 
\cite[Thm.~1.1]{KKLV}). 

The latter result does not extend to non-normal varieties, 
a classical example being the nodal curve $X$  
obtained from the projective line $\bP^1$ by identifying $0$ 
and $\infty$: the natural action of the multiplicative group 
$\bG_m$ on $\bP^1$ yields an action on $X$, and every 
$\bG_m$-stable open neighborhood of the node is the whole $X$.
Yet $X$ admits an equivariant \'etale covering by an affine
variety, namely, the union of two affine lines
meeting at their origin, where $\bG_m$ acts by scalar 
multiplication on each line.

In this article, we show that the above results on linearization
of line bundles and the local properties of algebraic group 
actions hold under weaker assumptions than normality, 
if the Zariski topology is replaced with the \'etale topology. 
For simplicity, we state our main result in the case where $k$ 
is algebraically closed:

\begin{theorem}\label{thm:main}
Let $X$ be a variety equipped with an action of a connected 
linear algebraic group $G$. 

\smallskip

\noindent
{\rm (i)} If $X$ is seminormal, then there exists a torsor 
$\pi : Y \to X$ under the character group of $G$, and a positive 
integer $n$ (depending only on $G$) such that 
$\pi^*(L^{\otimes n})$ is $G$-linearizable for any line bundle 
$L$ on $X$.

\smallskip

\noindent
{\rm (ii)} If in addition $X$ is quasi-projective, then 
it admits an equivariant \'etale covering by $G$-stable
subvarieties of projectivizations of finite-dimensional 
$G$-modules.

\smallskip

\noindent
{\rm (iii)} If $G$ is a torus and $X$ is quasi-projective, 
then $X$ admits an equivariant \'etale covering by affine 
varieties. 
\end{theorem}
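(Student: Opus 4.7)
The plan is to prove (i) first and then derive (ii) and (iii) from it by constructing equivariant projective, respectively affine, charts on the étale cover supplied by (i).

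For (i), my approach is to reduce to the already-known normal case via the normalization $\nu \colon \tilde X \to X$, which is a finite $G$-equivariant morphism. The theorems of Mumford, Sumihiro, and Knop--Kraft--Luna--Vust recalled above furnish an integer $n$ depending only on $G$ such that $\nu^*(L^{\otimes n})$ carries a $G$-linearization on $\tilde X$ for every $L \in \Pic(X)$. The substantive task is to descend this linearization along $\nu$, at the cost of passing to a $\wG$-torsor over $X$. I would analyze the obstruction via the exact sequence relating $\cO_X^*$, $\nu_*\cO_{\tilde X}^*$, and the conductor quotient $\nu_*\cO_{\tilde X}^*/\cO_X^*$. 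Seminormality of $X$ forces this quotient to be, in a suitable sense, represented by a unipotent group sheaf supported on the non-normal locus; consequently, the connected component of its sheaf of $G$-equivariant automorphisms is trivial, and the obstruction to descent reduces to a class in $\rH^1_{\et}(X,\wG)$. The desired torsor $\pi \colon Y \to X$ is then the one classified by this universal obstruction class; by construction, $\pi^*(L^{\otimes n})$ admits a $G$-linearization on $Y$.

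For (ii), choose $L$ ample on the quasi-projective $X$. Since $\pi$ is finite, $\pi^*(L^{\otimes n})$ is ample on $Y$, and it is $G$-linearized by (i). A sufficiently high power then has enough $G$-stable finite-dimensional spaces of global sections to produce an equivariant locally closed immersion of $Y$ into the projectivization of a finite-dimensional $G$-module $V$. Together with the étale surjection $\pi$, this exhibits the required equivariant étale covering of $X$.

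For (iii), with $G$ a torus, $Y$ inherits from (i) a $G$-linearized ample line bundle. Applying a seminormal analogue of Sumihiro's theorem---which should itself follow from the linearization already established, using $G$-stable open subsets defined as non-vanishing loci of semi-invariant sections---gives a covering of $Y$ by $G$-stable affine opens. Composing with $\pi$ yields the desired equivariant étale affine covering of $X$.

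The main obstacle will be the rigorous identification of the descent obstruction in (i) as a class in $\rH^1_{\et}(X,\wG)$. This requires a delicate use of seminormality to rule out unipotent contributions from the non-normal locus, and careful bookkeeping of how automorphisms of fibers of $\nu$ act on linearizations. A secondary technical point is that the integer $n$ must depend only on $G$ throughout the descent construction, so the torsor $Y$ must be chosen universally rather than bundle by bundle. In (iii), extending Sumihiro's affine covering theorem to the seminormal étale setting is itself a nontrivial step, since the classical argument relies on normality of $Y$.
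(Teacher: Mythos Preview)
Your proposal contains two genuine gaps.

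\textbf{Part (i).} Your descent-along-normalization strategy hinges on the claim that seminormality forces the conductor quotient $\nu_*\cO_{\tilde X}^*/\cO_X^*$ to be ``unipotent,'' so that only a $\wG$-valued obstruction survives. This is backwards. Seminormality is precisely the condition that kills the \emph{additive} (unipotent) contributions and leaves the \emph{multiplicative} ones: for the nodal curve, the quotient is $\bG_m$, not $\bG_a$; it is the cuspidal (non-seminormal) curve whose conductor quotient is $\bG_a$. So the mechanism you invoke does not isolate an $H^1_{\et}(X,\wG)$-class in the way you describe, and the descent argument as sketched does not go through. The paper takes a completely different route: it never attempts to descend a linearization along $\nu$. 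Instead it analyzes the obstruction group $\Pic(G\times X)/p_2^*\Pic(X)$ directly via the Leray spectral sequence for $p_2$, producing an injective map $c:H^1_{\et}(X,\wG)\to\Pic(G\times X)/p_2^*\Pic(X)$. The substantive input from seminormality is Traverso's characterization $\Pic(R)\cong\Pic(R[t])$ together with Weibel's $K$-theoretic computation of $\Pic(R[t,t^{-1}])$, which are used (after reducing $G$ to $\bG_m$ via the fibrations $G\to G/U\to G/B$) to show that the stalks of $R^1p_{2*}(\bG_m)$ coincide with $\Pic(G)$. This forces the cokernel of $c$ to be $n$-torsion, so $\psi(L^{\otimes n})$ lies in the image of $c$ and is killed after pullback to the corresponding $\wG$-torsor.

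\textbf{Parts (ii) and (iii).} You assert that $\pi:Y\to X$ is finite, hence $\pi^*(L^{\otimes n})$ is ample and $Y$ embeds equivariantly in some $\bP(V)$. But $\wG$ is a free abelian group of positive rank whenever $G$ has nontrivial characters, so $Y$ is \emph{not} of finite type over $k$ and your ampleness/global-sections argument collapses. The paper repairs this with a structural result (its Proposition~\ref{prop:lift}): $Y$ is a locally finite union of closed $G$-stable subvarieties $Y_i$, each finite and surjective over $X$. For (ii) one takes, at each point $y$, the finite union $Y_y$ of the $Y_i$ through $y$; this is a $G$-stable subvariety, finite over $X$, carrying an ample $G$-linearized bundle, and \'etale over $X$ near $y$. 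For (iii) one refines this to obtain a quotient $Y/n\wG$ which is finite \'etale over $X$ and covered by $G$-quasiprojective opens; the passage to affine opens then uses the classical Sumihiro argument, with an extra step (via the seminormalization and Chevalley's theorem on affineness) to drop the seminormality hypothesis on $X$ itself.
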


We now provide details on the notions occuring in the above 
statement. By a variety, we mean a reduced separated scheme 
of finite type over the ground field 
(in particular, varieties need not be irreducible). 
Also, a line bundle $L$ on a $G$-variety $X$ is called 
$G$-linearizable, if $L$ admits a $G$-action which lifts 
the action on $X$ and is linear on fibers (see 
\cite[Sec.~1.3]{Mumford} or Subsection \ref{subsec:lin} 
for further details on linearization). 
Finally, recall the definition of seminormality: a reduced scheme 
$X$ is seminormal if every integral bijective morphism 
$f : X' \to X$ which induces an isomorphism on all residue 
fields, is an isomorphism. 
Every reduced scheme $X$ has a seminormalization map
$\sigma : X^+ \to X$, which factors the normalization map
$\eta : \tilde{X} \to X$ (see 
\cite{Andreotti-Bombieri, Greco-Traverso, Swan}).
Nodal curves are seminormal, but cuspidal curves are not.
Any projective cuspidal cubic curve $X$ has an action of 
the multiplicative group $\bG_m$ for which most line bundles 
on $X$ are not linearizable; see \cite[4.1.5]{Alexeev} or 
Example \ref{ex:cuspidal} for details. Thus, Theorem 
\ref{thm:main} (i) does not extend to arbitrarily singular 
varieties. 

That result takes a much simpler form when the character group
of $G$ is trivial, e.g., if $G$ is unipotent or semisimple. Then 
any line bundle on a seminormal $G$-variety $X$ has a 
$G$-linearizable positive power (depending only on $G$). 
As a consequence, if $X$ is quasi-projective, then it admits 
an equivariant embedding in the projectivization of a 
finite-dimensional $G$-module. Yet when the character group of
$G$ is nontrivial, the cover $\pi : Y \to X$ is infinite 
and hence $Y$ is not of finite type; but its irreducible 
components are varieties, as follows from Proposition 
\ref{prop:lift}.

In the case where $X$ is proper and $G$ is a torus, 
Theorem \ref{thm:main} (i) has been obtained by Alexeev 
(see \cite[Thm.~4.3.1]{Alexeev}) in the process of the
construction of certain moduli spaces.
His proof is based on the representability of the Picard 
functor, and hence does not extend to our general setting. 
We rather rely on results and methods from algebraic 
$K$-theory, taken from an article of Weibel (see 
\cite{Weibel}) which is chiefly concerned with the Picard 
group of Laurent polynomial rings over commutative rings. 
The connection with linearization will hopefully be clear 
from the following overview of the present article.

We work over an arbitrary field $k$; this raises some 
technical issues, as for example there exist connected 
unipotent groups having an infinite Picard group, whenever 
$k$ is imperfect (see \cite[Sec.~6.12]{KMT}).

In Section \ref{sec:prel}, we gather preliminary results
on the Picard group of linear algebraic groups, and on the
equivariant Picard group $\Pic^G(X)$ which classifies 
$G$-linearized line bundles on a $G$-scheme $X$; these results 
are variants of those in \cite{Sumihiro-II, KKV, KKLV}. In 
particular, when $X$ is reduced, we obtain an exact sequence
\[ \Pic^G(X) \stackrel{\varphi}{\longrightarrow} 
\Pic(X) \stackrel{\psi}{\longrightarrow}
\Pic(G \times X)/p_2^* \Pic(X), \]
where $\varphi$ denotes the forgetful map, and the
obstruction map $\psi$ arises from the pull-back under
the action morphism $G \times X \to X$ (see Proposition
\ref{prop:long}). Finally, we show that the obstruction 
group $\Pic(G \times X)/p_2^* \Pic(X)$ is $n$-torsion 
if $X$ is normal, where $n$ is a positive integer depending 
only on $G$ (Theorem \ref{thm:normal}).

The obstruction group is studied further in Section 
\ref{sec:semi}. We construct an injective map 
$c : H^1_{\et}(X,\wG) \to \Pic(G \times X)/p_2^* \Pic(X)$, 
where the left-hand side denotes the first \'etale cohomology 
group with coefficients in the character group of $G$ 
(viewed as an \'etale sheaf); recall that this 
cohomology group classifies $\wG$-torsors over $X$. 

Our main technical result (Theorem \ref{thm:obs}) asserts in
particular that the cokernel of $c$ is $n$-torsion for $n$ 
as above, if $X$ is a geometrically seminormal variety. When 
$G = \bG_m$, so that $\wG = \bZ$ and $G \times X = X[t,t^{-1}]$, 
the map $c : H^1_{\et}(X,\bZ) \to \Pic(X[t,t^{-1}])/\Pic(X)$ 
is a key ingredient of \cite[Sec.~7]{Weibel}, where it is
shown that $c$ is an isomorphism if $X$ is seminormal. 
For an arbitrary $G$, our proof proceeds via a reduction to 
$\bG_m$ by analyzing the behavior of the Picard group 
under various fibrations.
 
In Section \ref{sec:appl}, we present several applications 
of our analysis of the obstruction group. We first show that
linearizability is preserved under algebraic equivalence
(Proposition \ref{prop:alg}). Then we obtain a version
of Theorem \ref{thm:main} over an arbitrary base field 
(Theorems \ref{thm:lin}, \ref{thm:locqp} and \ref{thm:locaf}). 
Finally, we show that the seminormality assumption in Theorem 
\ref{thm:main} (i) and (ii) may be suppressed in prime 
characteristics (Subsection \ref{subsec:prime}).

Further applications, to the theorem of the square and the local 
properties of nonlinear group actions, will be presented in the 
follow-up article [Br14].

\medskip

\noindent
{\bf Notation and conventions.}
We consider schemes, their morphisms and their products over an 
arbitrary field $k$, with algebraic closure $\bar{k}$. All schemes 
are assumed to be separated and locally noetherian.
For any such scheme $X$, we denote by $\cO(X)$ the $k$-algebra of global 
sections of the structure sheaf, and by $\cO(X)^*$ the group of
units (i.e., invertible elements) of that algebra. The scheme obtained
from $X$ by base change via a field extension $K/k$ is denoted by $X_K$.

A smooth group scheme of finite type will be called an algebraic 
group. Throughout this article, $G$ denotes a connected algebraic
group, and $e_G$ its neutral element; a $G$-scheme is a scheme $X$ 
equipped with a $G$-action $\alpha : G \times X \to X$. We denote
by $\wG = \Hom_{\gp}(G,\bG_m)$ the character group scheme of $G$. 
We will view $\wG$ as an \'etale sheaf of free abelian groups 
of finite rank on $\Spec(k)$, and denote by $\wG(S)$ the abelian group 
of sections of $\wG$ over a scheme $S$.

\section{Preliminary results}
\label{sec:prel}

\subsection{The Picard group of a linear algebraic group}
\label{subsec:pic}

Recall that a connected unipotent group (resp.~a torus)
is said to be split, if it is an iterated extension of copies of 
the additive group $\bG_a$ (resp.~of the multiplicative group $\bG_m$). 
Also, a connected reductive group is said to be split, if it has 
a split maximal torus. We now introduce a direct generalization of 
these notions:

\begin{definition}\label{def:split}
We say that $G$ is \emph{split} if there is an exact sequence 
of algebraic groups
\begin{equation}\label{eqn:split}
1 \longrightarrow U \longrightarrow G \longrightarrow H \longrightarrow 1,
\end{equation}
where $U$ is a split connected unipotent group, and $H$ is a split
connected reductive group.
\end{definition}

\begin{remarks}\label{rem:split}
(i) The exact sequence (\ref{eqn:split}) is unique if it exists,
since $U$ is the unipotent radical of $G$. Also, the class of
split algebraic groups is stable under quotients by closed normal 
subgroups and under base change by field extensions.

\noindent
(ii) The split solvable groups in the above sense are exactly the 
extensions of split tori by split connected unipotent groups. This 
is equivalent to the usual notion of split solvable groups
(iterated extensions of copies of $\bG_a$ and $\bG_m$) in view of 
\cite[Thm.~15.4]{Borel}. 

\noindent
(iii) If $k$ is perfect, then any connected unipotent group is split
(see e.g. \cite[Cor.~15.5]{Borel}); moreover, the unipotent radical 
of $G_{\bar{k}}$ is defined over $k$. It follows that $G$ is split if 
and only if it has a split maximal torus. As a consequence, the
class of split algebraic groups is also stable under group extensions.

\noindent
(iv) If $k$ is imperfect, then there exist nontrivial forms of 
$\bG_a$, i.e., nonsplit connected unipotent groups of dimension $1$
(see e.g. \cite[Thm.~2.1]{Russell}). 

\noindent
(v) Clearly, any split group is connected and linear. Also,
$G_{\bar{k}}$ is split for any connected linear algebraic group $G$;
thus, $G_{k'}$ is split for some finite extension of fields $k'/k$. 
But such an extension may not be chosen separable; for example, 
when $G$ is a nontrivial form of $\bG_a$ (see e.g. 
\cite[Lem.~1.1, Lem.~1.2]{Russell}).
\end{remarks}

\begin{lemma}\label{lem:split}
If $G$ is split, then the sheaf $\wG$ is constant. Moreover, 
$\Pic(G)$ is finite and the natural map $\Pic(G) \to \Pic(G_K)$ 
is an isomorphism for any field extension $K/k$.
\end{lemma}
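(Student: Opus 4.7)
The plan is to treat the three assertions in succession, reducing the Picard group statements to the case of a split connected reductive group.

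\textbf{The sheaf $\wG$ is constant.} Since $U$ is unipotent, every character of $G$ is trivial on $U$ and factors through $H = G/U$; since $[H,H]$ is perfect, characters factor further through $H^{\ab}$. As $H$ is split reductive, $H^{\ab}$ is a quotient of a split maximal torus of $H$, hence itself a split torus. Its character sheaf is then a constant sheaf of finitely generated free abelian groups on $\Spec(k)$.

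\textbf{Reduction to the reductive case.} The quotient $\pi \colon G \to H$ is a principal bundle under the split unipotent group $U$. Using a composition series of $U$ with $\bG_a$-quotients together with the affine vanishing $\rH^1(H, \bG_a) = \rH^1(H, \cO_H) = 0$, a standard inductive argument shows that every $U$-torsor on the affine scheme $H$ is Zariski-trivial. Hence $G \cong U \times H$ as $H$-schemes, while $U \cong \bA^m_k$ as a $k$-scheme; homotopy invariance of the Picard group on smooth varieties then yields $\pi^*\colon \Pic(H) \xrightarrow{\sim} \Pic(G)$. The same argument applies over any field extension $K$, since being split is preserved under base change (Remark \ref{rem:split}(i)); this reduces both finiteness and base change to the case $G = H$ split reductive.

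\textbf{The split reductive case.} Let $Z^0$ denote the identity component of the center of $H$ and $\tilde H$ the simply connected cover of $[H,H]$. The product map $\tilde H \times Z^0 \to H$ is a central isogeny, with kernel a finite central group scheme $F$; since $\tilde H$ and $Z^0$ are split, $F$ sits inside a product of split tori, and is therefore of multiplicative type with constant character sheaf $\widehat{F}$. A classical result (Fossum--Iversen, resting on Steinberg) gives $\Pic(\tilde H) = 0$ for split simply connected semisimple $\tilde H$, and $\Pic(Z^0) = 0$ since $Z^0$ is a torus. A standard exact sequence for the isogeny then realizes $\Pic(H)$ as a subquotient of $\widehat{F}(k)$, which is finite. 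Since $\widehat{F}$ is a constant sheaf, $\widehat{F}(k) = \widehat{F}(K)$ for any extension $K/k$, and the identical analysis over $K$ produces the same subquotient, giving the base-change isomorphism $\Pic(H) \xrightarrow{\sim} \Pic(H_K)$.

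\textbf{Main obstacle.} The deepest classical input is the vanishing $\Pic(\tilde H) = 0$ for split simply connected semisimple $\tilde H$. Beyond this, the work is careful bookkeeping: one must verify that every intermediate object---the trivialization of the $U$-torsor, the multiplicative structure of $F$ (which may be non-\'etale in positive characteristic, e.g.\ $\mu_p$), and the resulting character sheaf $\widehat{F}$---is insensitive to extending the ground field. The split hypothesis is precisely what guarantees each of these.
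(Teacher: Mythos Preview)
Your argument is correct. The first two steps---showing $\widehat{G}$ is constant via the split torus $H^{\ab}$, and reducing to the split reductive case by trivializing the $U$-torsor $G \to H$---match the paper's proof closely.

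In the reductive step you take a genuinely different route. The paper invokes Iversen's exact sequence
\[
0 \longrightarrow \widehat{G} \longrightarrow \widehat{T} \longrightarrow \Pic(G/B) \longrightarrow \Pic(G) \longrightarrow 0
\]
for a split maximal torus $T$ and Borel subgroup $B$, and observes that both $\Pic(G/B)$ and the characteristic map $\widehat{T} \to \Pic(G/B)$ are determined purely by the root datum of $(G,T)$, which is unchanged under field extension; finiteness and base-change invariance of $\Pic(G)$ then drop out simultaneously. You instead pass to the central isogeny $\tilde H \times Z^0 \to H$ and use the vanishing $\Pic(\tilde H) = 0$ for split simply connected groups to exhibit $\Pic(H)$ as a quotient of the finite constant group $\widehat{F}(k)$. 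Both approaches rest on the same structure theory; the paper's is slightly more self-contained, since Iversen's sequence already packages the root-datum description, whereas the vanishing $\Pic(\tilde H) = 0$ that you import is itself typically proved via that same sequence. One small point you leave implicit: the exact sequence you invoke requires $\Pic(\tilde H \times Z^0) = 0$, not just the vanishing of each factor; this follows since $Z^0 \cong \bG_m^r$ and $\tilde H$ is smooth, so $\Pic(\tilde H \times \bG_m^r)$ is a quotient of $\Pic(\tilde H \times \bA^r) \cong \Pic(\tilde H) = 0$.
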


\begin{proof}
With the notation of Definition \ref{def:split}, the sheaf
$\wU$ is trivial, and hence the pull-back map $\wH \to \wG$ 
is an isomorphism. Choose a split maximal torus $T$ of $H$. 
Then the \'etale sheaf of abelian groups $\wT$ on $\Spec(k)$
is constant, and the pull-back map $\wH \to \wT$ 
is injective; it follows that $\wG$ is constant as well.

Also, $U$ (viewed as a variety) is isomorphic to an affine space 
$\bA^n$, and $G$ (viewed as a variety again) is isomorphic to
$U \times H$, since the $U$-torsor $G \to G/U \cong H$ is trivial.
It follows that the pull-back map $\Pic(H_K) \to \Pic(G_K)$ is an 
isomorphism for any field extension $K/k$. Thus, we may assume that 
$G$ is reductive. Choose a Borel subgroup $B \subset G$ and a
maximal torus $T \subset B$. By \cite[Prop.~1.9]{Iversen}, 
we have an exact sequence
\begin{equation}\label{eqn:picG} \CD
0 @>>> \wG @>{i^*}>> \wT @>{\gamma}>> \Pic(G/B) 
@>{f^*}>> \Pic(G) @>>> 0, 
\endCD \end{equation}
where $i : T \to G$ denotes the inclusion, $\gamma$ the
characteristic homomorphism, and $f : G \to G/B$ the quotient map.
Furthermore, $\Pic(G/B)$ and $\gamma$ can be explicitly described 
in terms of the root datum of $(G,T)$, in view of 
\cite[Prop.~5.2, Thm.~5.3]{Iversen}.
It follows that $\Pic(G)$ depends only on this root datum
(as already observed in \cite[Rem.~VII.1.7.a)]{Raynaud}); 
moreover, this datum is unchanged under field extensions.
\end{proof}

We also record the following observation, implicit in
\cite[Lem.~VII.1.6.1]{Raynaud}:

\begin{lemma}\label{lem:tors}
Let $X$ be a scheme, and $k'/k$ a finite extension of fields. 
Then the kernel of the pull-back map $\Pic(X) \to \Pic(X_{k'})$
is killed by $[k':k]$.
\end{lemma}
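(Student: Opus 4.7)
The plan is to use the norm map for the finite flat morphism $f : X_{k'} \to X$ of degree $n = [k':k]$ coming from base change. Since $\Spec(k') \to \Spec(k)$ is finite locally free of degree $n$, so is $f$ after base change, and hence $f_* \cO_{X_{k'}}$ is a locally free $\cO_X$-module of rank $n$.

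First I would recall that for any line bundle $L$ on $X$, the projection formula gives a natural isomorphism
\[ f_* f^* L \; \cong \; L \otimes_{\cO_X} f_* \cO_{X_{k'}}. \]
Taking the determinant of both sides, and using that $\det(L \otimes \cE) \cong L^{\otimes \rk(\cE)} \otimes \det(\cE)$ for a locally free sheaf $\cE$, yields
\[ \det\bigl( f_* f^* L \bigr) \; \cong \; L^{\otimes n} \otimes \det\bigl( f_* \cO_{X_{k'}} \bigr). \]

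Now suppose $L$ lies in the kernel of $f^* : \Pic(X) \to \Pic(X_{k'})$, so $f^* L \cong \cO_{X_{k'}}$. Then the left-hand side above becomes $\det(f_* \cO_{X_{k'}})$, and cancelling this factor gives $L^{\otimes n} \cong \cO_X$ in $\Pic(X)$, which is the claim.

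I do not expect any serious obstacle: the content is essentially the existence of the norm homomorphism $N_{X_{k'}/X} : \Pic(X_{k'}) \to \Pic(X)$ (defined by $\cM \mapsto \det f_* \cM$) and the identity $N_{X_{k'}/X} \circ f^* = [n] \cdot (\text{plus twist by a fixed bundle})$, which collapses on the kernel. The only point requiring a brief justification is the flatness and local freeness of $f$, which is immediate since $\Spec(k') \to \Spec(k)$ is finite locally free.
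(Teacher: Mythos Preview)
Your proof is correct and follows essentially the same approach as the paper: both exploit the norm for the finite locally free morphism $f:X_{k'}\to X$. The paper simply cites the EGA norm $\rN$ and the identity $\rN(L_{k'})\cong L^{\otimes[k':k]}$ from \cite[II.6.5.2.4]{EGA}, whereas you unpack this via the projection formula and $\det f_*$ (your map differs from $\rN$ only by the fixed twist $\det f_*\cO_{X_{k'}}$, which is exactly what you cancel).
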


\begin{proof}
Consider a line bundle $L$ on $X$ such that $L_{k'}$ 
is trivial. Then the norm $\rN(L_{k'})$ (defined in 
\cite[II.6.5]{EGA}) is trivial as well. But 
$\rN(L_{k'}) \cong L^{\otimes [k':k]}$ by 
\cite[II.6.5.2.4]{EGA}; this yields the assertion.
\end{proof}

We now obtain a refinement of a result of Raynaud (see
\cite[Cor.~VII.1.6]{Raynaud}).
Assume that $G$ is linear, and consider a finite extension 
$k'/k$ of fields such that $G_{k'}$ is split. By Lemma 
\ref{lem:split}, the group $\Pic(G_{k'})$ is finite and 
independent of $k'$. Denote by $m = m(G)$ the exponent of 
that group (i.e., the smallest positive integer such that 
$\Pic(G_{k'})$ is $m$-torsion) and by $d = d(G)$ the greatest 
common divisor of the degrees $[k':k]$ of splitting fields 
for $G$. Finally, set $n = n(G) := d \, m$.

\begin{proposition}\label{prop:tors}
With the above notation and assumptions, the abelian group $\Pic(G_K)$ 
is killed by $n$ for any field extension $K/k$.
\end{proposition}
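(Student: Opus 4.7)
The plan is to show that $nL = 0$ for every $L \in \Pic(G_K)$, by constructing for each finite splitting field $k_0/k$ of $G$ a family of finite extensions of $K$ over which $L$ becomes sufficiently killed. Fix such a $k_0$ (which exists by Remark~\ref{rem:split}(v)), and decompose the finite $K$-algebra $K \otimes_k k_0$ as a product $\prod_i A_i$ of local Artinian $K$-algebras with residue fields $K'_i$. Each $K'_i/K$ is a finite extension, and the composition $k_0 \to K \otimes_k k_0 \to K'_i$ shows that $K'_i$ contains $k_0$, so $G_{K'_i}$ is split over $K'_i$.

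By Lemma~\ref{lem:split} applied to $G_{k_0}$, the pullback map $\Pic(G_{k_0}) \to \Pic(G_{K'_i})$ is an isomorphism, and $\Pic(G_{k_0})$ has exponent $m$ by definition of $m = m(G)$. Hence $mL$ lies in the kernel of $\Pic(G_K) \to \Pic(G_{K'_i})$, which by Lemma~\ref{lem:tors} is killed by $[K'_i:K]$. Therefore $[K'_i:K] \cdot m \cdot L = 0$ in $\Pic(G_K)$ for every $i$, and so $\gcd_i [K'_i:K] \cdot m \cdot L = 0$, since the annihilator of $mL$ in $\bZ$ is an ideal. Now the dimensional identity
\[
\sum_i [K'_i:K] \cdot \mathrm{length}(A_i) \;=\; \dim_K(K \otimes_k k_0) \;=\; [k_0:k]
\]
shows that $\gcd_i [K'_i:K]$ divides $[k_0:k]$, whence $[k_0:k] \cdot m \cdot L = 0$. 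Letting $k_0$ range over all finite splitting fields of $G$, the greatest common divisor of such $[k_0:k]$ is $d$ by definition, so $nL = d \cdot m \cdot L = 0$.

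The main technical subtlety is handling inseparable splitting fields: when $k_0/k$ is inseparable, $K \otimes_k k_0$ may carry nilpotents and $\sum_i [K'_i:K]$ can be strictly smaller than $[k_0:k]$, so it is essential to weight the sum by the Artinian lengths of the $A_i$ in order to deduce $\gcd_i [K'_i:K] \mid [k_0:k]$. In the separable case $K \otimes_k k_0$ is \'etale over $K$, all lengths equal $1$, and this divisibility is immediate.
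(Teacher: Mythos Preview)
Your proof is correct and follows the same strategy as the paper's: pass to a residue field of $K\otimes_k k'$ (where $G$ becomes split), apply Lemma~\ref{lem:split} to see that $mL$ dies there, and use the norm bound of Lemma~\ref{lem:tors} to descend; then vary the splitting field $k'$ to replace $[k':k]$ by $d$. You are in fact more careful than the paper on one point: the paper picks a single maximal ideal and asserts that $[K':K]$ divides $[k':k]$, which can fail (e.g.\ $k=\bQ$, $k'=\bQ(2^{1/3})=K$ gives residue degrees $1$ and $2$ while $[k':k]=3$), whereas your use of \emph{all} residue fields together with the length-weighted formula $\sum_i [K'_i:K]\cdot\mathrm{length}(A_i)=[k':k]$ legitimately yields $\gcd_i[K'_i:K]\mid[k':k]$ and hence the conclusion.
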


\begin{proof}
Choose a splitting field $k'$ and a maximal ideal $\fm$ 
of the algebra $K \otimes_k k'$. Then the quotient field 
$K' := (K \otimes_k k')/\fm$ 
is a finite extension of $K$ of degree dividing $[k':k]$. 
By Lemma \ref{lem:tors}, it follows that the kernel of the 
pull-back map $\Pic(G_K) \to \Pic(G_{K'})$ is killed by $[k':k]$. 
Moreover, $K'$ contains $k'$ and hence 
$\Pic(G_{K'})\cong \Pic(G_{k'})$ in view of Lemma \ref{lem:split}.
Thus, $\Pic(G_K)$ is killed by $[k':k] \, m$. 
\end{proof}

We say that $n$ is the \emph{stable exponent} of $\Pic(G)$. When 
$G$ is split (e.g., when $k$ is algebraically closed), $n$ is just 
the exponent of that group. For an arbitrary connected linear 
algebraic group $G$, we do not know any example where the stable 
exponent differs from the exponent of $\Pic(G)$.

\subsection{A criterion for linearizability}
\label{subsec:lin}

We first obtain a variant of another result of Raynaud
(see \cite[Cor.~VII.1.2]{Raynaud}):

\begin{lemma}\label{lem:unit}
Let $X$ be a reduced scheme. Then the multiplication map
\[ \mu: \wG(X) \times \cO(X)^* \longrightarrow \cO(G \times X)^*, 
\quad (\chi,f) \longmapsto ((g,x) \mapsto \chi(x)(g) \, f(x)) \]
is an isomorphism.
\end{lemma}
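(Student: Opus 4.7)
The plan is to prove the two directions separately, with the real content being surjectivity.

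\emph{Injectivity} is formal: pulling $\mu(\chi,f)$ back along the section $s := (e_G,\id_X) : X \to G \times X$ yields $f$, since every character sends $e_G$ to $1$. So $\mu(\chi,f) = 1$ forces first $f = 1$, and then $\chi(x)(g) = 1$ identically, hence $\chi = 1$.

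For surjectivity, given $u \in \cO(G \times X)^*$, I would set $f := s^*u \in \cO(X)^*$ and $v := u \cdot p_2^*(f^{-1})$; this is a unit on $G \times X$ satisfying $s^*v = 1$. The goal is to show $v$ arises from a morphism $\chi : X \to \wG$ (so that $\mu(\chi,f) = u$). Since $\wG$ represents the functor assigning to a scheme $T$ the group of characters of $G_T$, this amounts to showing $v$ is multiplicative in the $G$-factor, i.e., that
\[
w := m^*(v) \cdot (p_{13}^*v)^{-1}\cdot(p_{23}^*v)^{-1} \;\in\; \cO(G \times G \times X)^*
\]
equals $1$, where $m : G \times G \to G$ is the group multiplication and $p_{ij}$ denote the projections from $G \times G \times X$. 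A direct substitution, using $s^*v = 1$, shows that $w$ restricts to $1$ on both ``axes'' $\{e_G\} \times G \times X$ and $G \times \{e_G\} \times X$.

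The main obstacle is to promote this axis-vanishing to the global identity $w = 1$. Since $G$ is smooth and $X$ is reduced, $G \times G \times X$ is reduced, so it suffices to verify $w = 1$ at every generic point. By faithfully flat descent along $k \to \kb$, I may assume $k$ algebraically closed; then $G$ is geometrically integral, and every generic point of $G \times G \times X$ has the form $\eta_G \times \eta_G \times \eta$ for $\eta$ a generic point of $X$. Over the residue field $K$ of such a point, Rosenlicht's theorem on units of products of geometrically integral varieties, combined with the standard identification $\cO(G_K)^*/K^* \cong \wG(K)$ for a connected linear algebraic group, yields
\[
\cO(G_K \times_K G_K)^* / K^* \;\cong\; \wG(K) \oplus \wG(K).
\]
Writing $w_K = c \cdot p_1^*(\chi_1) \cdot p_2^*(\chi_2)$ in this decomposition and imposing the two axis conditions forces $c = 1$ and $\chi_1 = \chi_2 = 1$; hence $w_K = 1$ and therefore $w = 1$. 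This produces the desired $\chi \in \wG(X)$ with $\mu(\chi, f) = u$. Alternatively, one could invoke Raynaud's [Cor.~VII.1.2] directly for the identity $w = 1$, at the price of matching frameworks.
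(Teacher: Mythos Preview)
Your argument is correct, and it rests on the same underlying input as the paper's---Rosenlicht's theorem that a normalized unit on a connected algebraic group is a character---but the paper's execution is much more direct. After the identical normalization $v(e_G,x)=1$, the paper simply applies Rosenlicht to the fiber $G_{\kappa(x)}$ over \emph{every} point $x$ of $X$: the map $g\mapsto v(g,x)$ is then a character for each $x$, which is exactly the statement $v\in\wG(X)$. There is no need to introduce the obstruction $w$ on $G\times G\times X$, restrict to generic points, or invoke the product decomposition $\cO(G_K\times_K G_K)^*/K^*\cong\wG(K)\oplus\wG(K)$; your route essentially unpacks the one-line fiberwise application of Rosenlicht into a cocycle computation and then re-proves it via two further appeals to Rosenlicht-type results. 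Two minor points: the base change to $\bar k$ is unnecessary, since a connected smooth group scheme with a rational point is already geometrically integral over $k$ (and the step is slightly delicate because $X_{\bar k}$ need not remain reduced); and your phrase ``connected linear algebraic group'' should drop ``linear'', as the lemma does not assume this and the identity $\cO(G_K)^*/K^*\cong\wG(K)$ holds for any connected algebraic group.
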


\begin{proof}
Clearly, $\mu$ is a group homomorphism. If $\mu(\chi,f) = 1$ then 
pulling back to $\{ e_G \} \times X$, we get $f = 1$ and hence 
$\chi = 1$; thus, $\mu$ is injective.
 
To show the surjectivity, consider $f \in \cO(G \times X)^*$.
Replacing $f$ with the map $(g,x) \mapsto f(g,x) \, f(e_G,x)^{-1}$,
we may assume that $f(e_G,x) = 1$ identically. Then the map 
$g \mapsto f(g,x)$ is a character for any point $x$, 
by \cite[Prop.~3]{Rosenlicht}. Therefore, $f \in \wG(X)$.
\end{proof}

\begin{lemma}\label{lem:weight}
Let $X$ be a reduced $G$-scheme. Then for any $f \in \cO(X)^*$,
there exists a unique $\chi = \chi(f) \in \wG(X)$ such that
$f(\alpha(g, x)) = \chi(x)(g) f(x)$ identically. Moreover, 
the assignement $f \mapsto \chi(f)$ yields an exact sequence
\begin{equation}\label{eqn:weight}
0 \longrightarrow \cO(X)^{*G} \longrightarrow \cO(X)^*
\stackrel{\chi}{\longrightarrow} \wG(X),
\end{equation} 
where $\cO(X)^{*G}$ denotes the subgroup of $G$-invariants in
$\cO(X)^*$.
\end{lemma}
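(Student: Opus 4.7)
The natural route is to pull $f$ back along the action morphism $\alpha : G \times X \to X$ and then decompose the resulting unit via Lemma \ref{lem:unit}. Concretely, set $F := \alpha^*(f) \in \cO(G \times X)^*$, which is a well-defined unit since $\alpha^*$ sends units to units. By Lemma \ref{lem:unit}, there exist uniquely determined $\chi \in \wG(X)$ and $h \in \cO(X)^*$ with $F(g,x) = \chi(x)(g) \, h(x)$ identically. Pulling this identity back along the neutral section $X \to G \times X$, $x \mapsto (e_G,x)$, the left-hand side becomes $f(\alpha(e_G,x)) = f(x)$, while the right-hand side becomes $\chi(x)(e_G) \, h(x) = h(x)$ because $\chi(x)$ is a character and hence sends $e_G$ to $1$. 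Thus $h = f$, which gives the desired formula $f(\alpha(g,x)) = \chi(x)(g) \, f(x)$, and uniqueness of $\chi(f)$ is inherited from the uniqueness in Lemma \ref{lem:unit}.

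To obtain the exact sequence, I would verify that $\chi : \cO(X)^* \to \wG(X)$ is a group homomorphism and then compute its kernel. Multiplicativity is immediate: if $f_1, f_2 \in \cO(X)^*$, then $\alpha^*(f_1 f_2) = \alpha^*(f_1) \alpha^*(f_2)$, and since the isomorphism in Lemma \ref{lem:unit} is a group isomorphism, the $\wG(X)$-component of a product equals the product of the $\wG(X)$-components, i.e., $\chi(f_1 f_2) = \chi(f_1) \chi(f_2)$. For the kernel, $\chi(f) = 1$ means that $\alpha^*(f) = p_2^*(f)$, where $p_2 : G \times X \to X$ is the second projection; by definition, this is exactly the condition that $f$ is $G$-invariant, so $\Ker(\chi) = \cO(X)^{*G}$. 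Injectivity of $\cO(X)^{*G} \hookrightarrow \cO(X)^*$ is trivial, finishing the exact sequence (\ref{eqn:weight}).

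The only step that requires any care is the appeal to Lemma \ref{lem:unit}, which is where reducedness of $X$ (and the Rosenlicht-type statement on units of $G \times X$) really enters; once that lemma is available, the rest is a formal unwinding. I do not anticipate any serious obstacle beyond making sure the restriction to the neutral section is done cleanly in order to identify the function $h$ with $f$ itself.
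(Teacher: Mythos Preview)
Your proof is correct and follows essentially the same approach as the paper: apply Lemma~\ref{lem:unit} to $\alpha^*(f)$, then evaluate at $g = e_G$ to identify the $\cO(X)^*$-factor with $f$. The paper simply remarks that the exact sequence ``follows readily,'' whereas you spell out the homomorphism and kernel computations explicitly, but the underlying argument is identical.
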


\begin{proof}
Applying Lemma \ref{lem:unit} to 
$f \circ \alpha \in \cO(G \times X)^*$ yields $\chi \in \wG(X)$ 
and $\varphi \in \cO(X)^*$ such that 
$f(\alpha(g, x)) = \chi(x)(g) \, \varphi(x)$ identically.
By evaluating at $g= e_G$, we obtain $\varphi = f$.
This yields the first assertion; the second one follows
readily.
\end{proof}

\begin{lemma}\label{lem:res}
Let $X$ be a reduced scheme, and 
$f \in  \cO(G \times G \times X)^*$ such that 
$f(e_G,g,x) = 1 = f(g,e_G,x)$ identically. Then $f = 1$.
\end{lemma}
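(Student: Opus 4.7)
The plan is to apply Lemma \ref{lem:unit} just once, but with the connected algebraic group $G \times G$ in place of $G$, taking $X$ itself as the reduced base scheme. The group $G \times G$ is again a connected algebraic group, so the lemma applies. Moreover, since $\bG_m$ is commutative, any group homomorphism $G \times G \to \bG_m$ restricts to a character on each factor and is recovered by multiplying the two, giving a canonical identification of étale sheaves $\widehat{G \times G} \cong \wG \oplus \wG$ on $\Spec(k)$.

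Under this identification, Lemma \ref{lem:unit} furnishes a unique decomposition
\[
f(g_1, g_2, x) \;=\; \chi_1(x)(g_1)\,\chi_2(x)(g_2)\,\varphi(x)
\]
with $\chi_1, \chi_2 \in \wG(X)$ and $\varphi \in \cO(X)^*$. Evaluating at $g_1 = g_2 = e_G$, characters give the value $1$, so $\varphi(x) = f(e_G, e_G, x) = 1$, whence $\varphi \equiv 1$. Substituting $g_1 = e_G$ then yields $\chi_2(x)(g_2) \equiv 1$, forcing $\chi_2 = 0$; symmetrically $g_2 = e_G$ forces $\chi_1 = 0$. Hence $f \equiv 1$.

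The only thing beyond Lemma \ref{lem:unit} that needs justification is the splitting $\widehat{G\times G} \cong \wG \oplus \wG$, which is immediate from the commutativity of $\bG_m$. I do not anticipate any genuine obstacle: the lemma is essentially a two-variable incarnation of Lemma \ref{lem:unit}, obtained by doubling the acting group.
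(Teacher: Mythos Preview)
Your proof is correct and follows essentially the same route as the paper: apply Lemma~\ref{lem:unit} to the connected group $G \times G$ acting on the reduced scheme $X$, use the splitting $\widehat{G \times G} \cong \wG \times \wG$ to write $f(g_1,g_2,x) = \chi_1(x)(g_1)\,\chi_2(x)(g_2)\,\varphi(x)$, and then specialize at $e_G$ in each slot to kill $\varphi$, $\chi_1$, and $\chi_2$ in turn. The only cosmetic difference is that the paper deduces $\varphi = 1$ simultaneously with the vanishing of the characters rather than first, and writes the trivial character multiplicatively.
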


\begin{proof}
By Lemma \ref{lem:unit} and the isomorphism
$\widehat{G \times G} \cong \wG \times \wG$, there exist 
$\chi,\eta \in \wG(X)$ and $\varphi \in \cO(X)^*$
such that $f(g,h,x) = \chi(x)(g) \, \eta(x)(h) \, \varphi(x)$
identically. Then the assumption means that
$\eta(x)(h) \, \varphi(x) = 1 = \chi(x)(g) \, \varphi(x)$ identically, 
and hence $\varphi = 1 = \chi = \eta$.
\end{proof}

Let $X$ be a $G$-scheme, and $\pi : L \to X$ a line bundle.
Recall from \cite[Def.~1.6]{Mumford} that a $G$-linearization 
of $L$ is an action of $G$ on the scheme $L$ which lifts the given 
$G$-action $\alpha$ on $X$, and commutes with the $\bG_m$-action
on $L$ by multiplication on fibers. Equivalently, a $G$-linearization 
of $L$ is an isomorphism $\Phi : \alpha^*(L) \to p_2^*(L)$
of line bundles on $G \times X$, which satisfies the cocycle 
condition $\Phi_{gh} = \Phi_h \circ h^*(\Phi_g)$ for all points $g$, 
$h$ of $G$. 

When $X$ is reduced, this cocycle condition may be omitted, 
as shown by the following result (implicit in 
\cite[p.~577]{Sumihiro-II}; for the case where $k$ is algebraically 
closed of characteristic $0$, see \cite[Lem.~2.3]{KKLV}):

\begin{lemma}\label{lem:lin}
Let $X$ be a reduced $G$-scheme, and $L$ a line bundle on $X$. 
Then $L$ admits a $G$-linearization if and only if the line bundles
$\alpha^*(L)$ and $p_2^*(L)$ on $G \times X$ are isomorphic. 
\end{lemma}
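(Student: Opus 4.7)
My plan is as follows. The \emph{only if} direction is immediate from the definition of linearization, so the content lies in the converse. Suppose given an isomorphism $\Phi : \alpha^*(L) \to p_2^*(L)$ on $G \times X$; the task is to upgrade $\Phi$ to a cocycle, i.e., to arrange the identity $\Phi_{gh} = \Phi_h \circ h^*(\Phi_g)$.

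First, I would normalize $\Phi$ along the neutral section. Pulling back via $(e_G, \id_X) : X \to G \times X$ yields an automorphism of $L$, hence a unit $u \in \cO(X)^*$. Replacing $\Phi$ by $p_2^*(u)^{-1} \cdot \Phi$, I may assume that $\Phi$ restricts to the identity on $\{e_G\} \times X$. This is the only freedom one has in adjusting $\Phi$, and it is designed precisely to trivialize the two ``boundary'' restrictions appearing in Lemma \ref{lem:res}.

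Next, measure the failure of the cocycle condition on $G \times G \times X$. By associativity of $\alpha$, both $(m \times \id_X)^* \Phi$ and $p_{23}^* \Phi \circ (\id_G \times \alpha)^* \Phi$ are isomorphisms from $(\alpha \circ (m \times \id_X))^* L = (\alpha \circ (\id_G \times \alpha))^* L$ to $p_3^* L$, where $m : G \times G \to G$ is multiplication and $p_3 : G \times G \times X \to X$ the projection. Since $X$ is reduced, so is $G \times G \times X$, and the ratio of these two isomorphisms of the same line bundle is multiplication by a unit $f \in \cO(G \times G \times X)^*$. The cocycle condition for $\Phi$ is precisely the statement $f = 1$.

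The decisive step is then to apply Lemma \ref{lem:res}. Restricting to $\{e_G\} \times G \times X$, the first pullback becomes $\Phi$ itself (since $e_G \cdot h = h$), while the second becomes $\Phi$ composed with $\Phi|_{\{e_G\} \times X} = \id$, hence again $\Phi$. So $f(e_G, h, x) = 1$ identically. A symmetric computation at $h = e_G$ gives $f(g, e_G, x) = 1$, and Lemma \ref{lem:res} forces $f = 1$. Thus $\Phi$ satisfies the cocycle condition and defines a $G$-linearization. The only real difficulty is bookkeeping: one must verify carefully that the two sides of the cocycle equation are morphisms between the \emph{same} two line bundles on $G \times G \times X$ (which uses associativity of $\alpha$), and track what ``restriction at $e_G$'' does to each of the three pullbacks $(m \times \id_X)^* \Phi$, $p_{23}^* \Phi$, and $(\id_G \times \alpha)^* \Phi$. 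Once this is done, the proof reduces to the normalization step and to Lemma \ref{lem:res}.
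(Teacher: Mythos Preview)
Your proof is correct and follows essentially the same approach as the paper: normalize $\Phi$ along the neutral section, express the obstruction to the cocycle (equivalently, associativity) condition as a unit on $G \times G \times X$, check that this unit is trivial on the two slices $g = e_G$ and $h = e_G$, and conclude via Lemma~\ref{lem:res}. The only cosmetic difference is that the paper phrases the argument in terms of the induced morphism $\beta : G \times L \to L$ on the total space rather than directly in terms of the cocycle identity for $\Phi$.
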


\begin{proof}
Let $\Phi: \alpha^*(L) \to p_2^*(L)$ be an isomorphism. Since
$\alpha \circ (e_G \times \id_X) = p_2 \circ (e_G \times \id_X) = \id_X$, 
the pull-back $(e_G \times \id_X)^*(\Phi)$ is identified
with an automorphism of the line bundle $\pi : L \to X$, i.e., 
with the multiplication by some $f \in \cO(X)^*$. Replacing 
$\Phi$ with $\Phi \circ p_2^*(f)^{-1}$, we may assume that $f = 1$. 
Then $\Phi$ corresponds to a morphism 
$\beta : G \times L \to L$ such that the diagram
\[ \CD
G \times L @>{\beta}>> L \\
@V{\id_G \times \pi}VV @V{\pi}VV \\
G \times X @>{\alpha}>> X \\
\endCD \]
commutes; moreover, $\beta(e_G,z) = z$ identically. It remains
to show that $\beta$ satisfies the associativity condition
of a group action.  But the obstruction to associativity is an
automorphism of the line bundle 
$\id_{G \times G} \times \pi: 
G \times G \times L \to G \times G \times X$,
i.e., the multiplication by some 
$\varphi \in \cO(G \times G \times X)^*$. Moreover, since 
$\beta(g,\beta(e_G,z)) = \beta(g,z) = \beta(e_G,\beta(g,z))$
identically, we have $\varphi(g,e_G,x) = 1 = \varphi(e_G,g,x)$.
By Lemma \ref{lem:res}, it follows that $\varphi = 1$, i.e.,
$\beta$ is associative.
\end{proof}

\subsection{The equivariant Picard group}
\label{subsec:equiv}

The isomorphism classes of $G$-linearized line bundles
on a given $G$-scheme $X$ form an abelian group that we will
call the equivariant Picard group, and denote by $\Pic^G(X)$.
This group is equipped with a homomorphism
\[ \varphi : \Pic^G(X) \longrightarrow \Pic(X) \]
which forgets the linearization.
Also, we have a homomorphism
\[ \gamma : \wG(X) \longrightarrow \Pic^G(X) \]
which assigns to any $\chi \in \wG(X)$, the class of the trivial 
line bundle $p_1: X \times \bA^1 \to X$ on which $G$ acts by
$\beta(g,x,t) := (\alpha(g,x), \chi(x)(g) t)$.

With this notation, we may now state the following result (a version 
of \cite[Lem.~2.2, Prop.~2.3]{KKLV}):

\begin{proposition}\label{prop:long}
Let $X$ be a reduced $G$-scheme. Then there is an exact sequence
\[ 0 \to \cO(X)^{*G} \to \cO(X)^* 
\stackrel{\chi}{\longrightarrow} \wG(X) 
\stackrel{\gamma}{\longrightarrow} \Pic^G(X) 
\stackrel{\varphi}{\longrightarrow} \Pic(X) 
\stackrel{\psi}{\longrightarrow} \Pic(G \times X)/p_2^*\Pic(X), \]
where $\psi(L)$ denotes the image of $\alpha^*(L)$
in $\Pic(G \times X)/p_2^*\Pic(X)$, for any $L \in \Pic(X)$.
\end{proposition}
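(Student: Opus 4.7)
The plan is to build on the exact sequence (\ref{eqn:weight}) of Lemma \ref{lem:weight}, which already provides exactness at $\cO(X)^{*G}$ and at $\cO(X)^*$. It remains to establish exactness at the three terms $\wG(X)$, $\Pic^G(X)$, and $\Pic(X)$. I would verify these in reverse order, relying on the preceding material of the subsection.

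For exactness at $\Pic(X)$, I would invoke Lemma \ref{lem:lin}: a class $L \in \Pic(X)$ lies in the image of $\varphi$ if and only if it admits a $G$-linearization, equivalently if and only if $\alpha^*(L) \cong p_2^*(L)$, which plainly forces $\psi(L) = 0$. Conversely, $\psi(L) = 0$ means $\alpha^*(L) \cong p_2^*(M)$ for some $M \in \Pic(X)$; pulling back via $e_G \times \id_X : X \to G \times X$ forces $M \cong L$, so $\alpha^*(L) \cong p_2^*(L)$ and Lemma \ref{lem:lin} provides the desired linearization. For exactness at $\Pic^G(X)$, clearly $\varphi \circ \gamma = 0$. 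Conversely, given $[L, \Phi] \in \ker(\varphi)$, I would fix a trivialization $L \cong X \times \bA^1$; then $\Phi$ corresponds to a $G$-action $\beta(g, x, t) = (\alpha(g, x), f(g, x)\, t)$ for some $f \in \cO(G \times X)^*$ with $f(e_G, x) = 1$ identically (from the identity axiom of a group action). Applying Lemma \ref{lem:unit} to $f$ yields $f(g, x) = \eta(x)(g) \cdot h(x)$ for some $\eta \in \wG(X)$ and $h \in \cO(X)^*$, and evaluating at $g = e_G$ forces $h = 1$; hence $f(g, x) = \eta(x)(g)$ and $[L, \Phi] = \gamma(\eta)$.

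For exactness at $\wG(X)$, given $f \in \cO(X)^*$ with associated $\chi(f) \in \wG(X)$ as in Lemma \ref{lem:weight}, the defining identity $f(\alpha(g, x)) = \chi(f)(x)(g) \cdot f(x)$ asserts exactly that fiberwise multiplication by $f^{-1}$ is a $G$-equivariant isomorphism from $\gamma(\chi(f))$ to the trivially linearized bundle, so $\gamma \circ \chi = 0$. Conversely, if $\eta \in \wG(X)$ satisfies $\gamma(\eta) = 0$, an equivariant trivialization is given by fiberwise multiplication by some $\psi \in \cO(X)^*$, and reversing the above computation yields $\eta = \chi(\psi^{-1})$. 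The main (modest) obstacle is the exactness at $\Pic^G(X)$, where one must verify that every $G$-linearization of a trivial line bundle comes from a single character; this step crucially depends on the reducedness of $X$, entering through Lemma \ref{lem:unit}. The remaining verifications are mechanical applications of Lemmas \ref{lem:lin} and \ref{lem:weight}.
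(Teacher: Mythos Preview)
Your proof is correct and follows essentially the same approach as the paper's own argument: both invoke Lemma~\ref{lem:weight} for the left end, use Lemma~\ref{lem:unit} to identify linearizations of the trivial bundle with characters (exactness at $\Pic^G(X)$), translate equivariant trivializations into the weight condition (exactness at $\wG(X)$), and apply Lemma~\ref{lem:lin} together with pull-back along $e_G \times \id_X$ for exactness at $\Pic(X)$. The only difference is the order in which you treat the three terms, which is immaterial.
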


\begin{proof}
In view of Lemma \ref{lem:weight}, it suffices to show that
the above sequence is exact at $\wG(X)$, $\Pic^G(X)$ and $\Pic(X)$.

Exactness at $\wG(X)$: Let $\lambda \in \wG(X)$. Then 
$\gamma(\lambda) = 0$ if and only if there is an isomorphism 
of $G$-linearized line bundles 
$F: X \times \bA^1 \to X \times \bA^1$, 
where the left-hand side denotes the trivial $G$-linearized bundle,
and the right-hand side, that linearized via $\lambda$. Then $F$ is 
given by $F(x,z) = (x, f(x) z)$ for some $f \in \cO(X)^*$; moreover,
the equivariance of $F$ translates into the condition that
$f(\alpha(g, x)) = \lambda(x)(g) \, f(x)$ on $G \times X$, i.e., 
$\lambda = \chi(f)$. 

Exactness at $\Pic^G(X)$: Let $L$ be a $G$-linearized line bundle
on $X$. Then $\varphi(L) = 0$ if and only if $L$ is trivial as
a line bundle. Moreover, any $G$-linearization of the trivial
line bundle $p_1 : X \times \bA^1 \to X$ is of the form
\[ \beta(g,x,z) = (\alpha(g,x), f(g,x) z), \]
where $f \in \cO(G \times X)^*$; moreover, $f(e_G,x) = 1$
identically. By Lemma \ref{lem:unit}, it follows that 
$f(g,x) = \chi(x)(g)$ for a unique $\chi \in \wG(X)$. 

Exactness at $\Pic(X)$: Let $L$ be a line bundle on $X$. 
By Lemma \ref{lem:lin}, $L$ is $G$-linearizable if and only if 
$\alpha^*(L) \cong p_2^*(L)$. This is equivalent to the condition 
that $\alpha^*(L) \cong p_2^*(M)$ for some line bundle $M$ on $X$,
since that condition implies $L \cong M$ by pulling back to
$\{ e_G \} \times X$.
\end{proof}

\begin{remark}\label{rem:comp}
The obstruction map $\psi = \psi_{G,X}$ is compatible with 
pull-backs in the following sense: given a homomorphism 
$h : G \to G'$ of connected algebraic groups, a reduced 
$G'$-scheme $X'$ and a morphism $f : X \to X'$ of schemes 
such that $f(g \cdot x) = h(g) \cdot f(x)$ identically, 
the diagram
\[ \CD
\Pic(X') @>{\psi_{G',X'}}>> \Pic(G' \times X')/p_2^* \Pic(X') \\
@V{f^*}VV @V{(h \times f)^*}VV \\
\Pic(X) @>{\psi_{G,X}}>> \Pic(G \times X)/p_2^* \Pic(X) \\
\endCD \]  
commutes. (This follows readily from the definition of $\psi$ 
as a pull-back).
\end{remark}

Next, we consider linearization of line bundles over normal 
$G$-schemes. We will need two lemmas:

\begin{lemma}\label{lem:prod}
Let $X$ be a normal integral scheme, $\eta$ its generic point,
and $Y$ a smooth integral scheme. Then we have an exact sequence 
\[ \Pic(X) \stackrel{p_1^*}{\longrightarrow} \Pic(X \times Y) 
\stackrel{(\eta \times \id_Y)^*}{\longrightarrow} 
\Pic(\eta \times Y) = \Pic(Y_{k(X)}). \]
\end{lemma}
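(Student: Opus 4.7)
The plan is to verify the two parts of exactness separately. The composition $(\eta \times \id_Y)^* \circ p_1^*$ sends $L \in \Pic(X)$ to the pullback of $L|_\eta$ along the projection $\eta \times Y \to \eta$; since $\eta = \Spec k(X)$ is the spectrum of a field, $L|_\eta$ is trivial, so the composition vanishes. The real content is the reverse direction: given $M \in \Pic(X \times Y)$ with $(\eta \times \id_Y)^*(M)$ trivial, produce $L \in \Pic(X)$ with $p_1^*(L) \cong M$.

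First I would spread out the trivialization. Since $\eta \times Y$ is the cofiltered limit of $U \times Y$ over non-empty opens $U \subseteq X$ with affine transition maps, a trivializing global section of $M|_{\eta \times Y}$ extends to a nowhere-vanishing section on some $U \times Y$ (for quasi-compact quasi-separated schemes, $\Pic$ and the sections functor commute with such limits). Hence we may assume $M|_{U \times Y} \cong \cO_{U \times Y}$. Because $Y$ is smooth, the projection $p_1 : X \times Y \to X$ is smooth, so $X \times Y$ is normal. The chosen trivialization then gives a rational section of $M$ on $X \times Y$ that is regular and nonvanishing on $U \times Y$, producing a Cartier divisor $D$ on $X \times Y$ with $M \cong \cO(D)$ and $\Supp(D) \subseteq (X \setminus U) \times Y$.

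Next, view $D$ as a Weil divisor on the normal scheme $X \times Y$. For each prime component $D_i$ of $D$, the image $p_1(D_i)$ lies in $X \setminus U$; since $D_i$ has codimension $1$ in $X \times Y$ and the fibres of $p_1$ have dimension $\dim Y$, a dimension count forces $p_1(D_i)$ to have codimension $1$ in $X$ and $D_i$ to be a component of $p_1^{-1}(Z_i) = Z_i \times Y$ for $Z_i := \overline{p_1(D_i)}$. Gathering terms (and handling multiplicities arising from the possibly non-integral fibres $Z_i \times Y$ when $Y$ is not geometrically irreducible), this yields a Weil divisor $D' = \sum n_i Z_i$ on $X$ such that $D = p_1^*(D')$ as Weil divisors on $X \times Y$.

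To conclude, I would argue that $D'$ is Cartier on $X$: the ideal sheaf $\cO_X(-D')$ is a coherent fractional ideal whose pullback under the faithfully flat map $p_1$ is $\cO_{X \times Y}(-D)$, which is invertible by construction; since invertibility of quasi-coherent sheaves is fpqc-local, $\cO_X(-D')$ is invertible, i.e., $D'$ is Cartier. Setting $L := \cO_X(D') \in \Pic(X)$, we get $p_1^*(L) \cong \cO_{X \times Y}(D) \cong M$, as required. The main obstacle I expect is the dimension and descent step for the Weil divisor structure when $X \times Y$ fails to be integral (which can happen even under our hypotheses, over a non-perfect or small base field); this is resolved by noting that $X \times Y$ is still normal, so each connected component is integral, and by running the divisor argument componentwise while exploiting the flatness of $p_1$ to transfer invertibility between $X$ and $X \times Y$.
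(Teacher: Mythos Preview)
Your proposal is correct and follows essentially the same route as the paper's proof: represent the line bundle by a Cartier divisor on the normal scheme $X \times Y$, use triviality over $\eta \times Y$ to arrange that the divisor is supported away from the generic fibre, observe that such a divisor is the pullback of a Weil divisor on $X$, and then invoke descent (faithfully flat, via the smooth surjection $p_1$) to conclude that this Weil divisor is Cartier. The paper compresses your spreading-out step into the single sentence ``there exists a rational function $f$ on $X \times Y$ such that $D - \div(f)$ vanishes on $\{\eta\} \times Y$'', which has the mild advantage of not requiring $Y$ to be quasi-compact (your limit argument for extending the trivialization to some $U \times Y$ implicitly uses this); but since in the paper's application $Y = G$ is of finite type, this makes no practical difference.
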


\begin{proof}
This can be extracted from \cite[Cor.~II.21.4.13]{EGA}, but we
prefer to give a direct proof. Note that $X \times Y$ is normal. 
Let $L$ be a line bundle on $X \times Y$ such that 
$(\eta \times \id_Y)^*(L)$ is trivial. Then 
$L \cong \cO_{X \times Y}(D)$ for some Cartier divisor $D$ on 
$X \times Y$, and there exists a rational function $f$ on 
$X \times Y$ such that $D - \div(f)$ vanishes on 
$\{ \eta \} \times Y$. Thus, $D - \div(f) = p_1^*(E)$ 
for some Weil divisor $E$ on $X$. Then $L \cong p_1^*(\cO_X(E))$; 
by descent, it follows that $E$ is a Cartier divisor.
\end{proof}

\begin{lemma}\label{lem:normal}
Let $X$ be a normal integral $G$-scheme, and $\eta$ its generic point. 
Then we have an exact sequence
\begin{equation}\label{eqn:longer}
\Pic^G(X) \stackrel{\varphi}{\longrightarrow} \Pic(X)
\stackrel{\rho}{\longrightarrow} \Pic(G_{k(X)}),
\end{equation}
where $\rho$  denotes the composition 
$\Pic(X) \stackrel{\alpha^*}{\longrightarrow}
\Pic(G \times X) \stackrel{(\id_G \times \eta)^*}{\longrightarrow}
\Pic(G_{k(X)})$.
\end{lemma}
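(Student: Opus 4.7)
The plan is to combine Proposition~\ref{prop:long} with Lemma~\ref{lem:prod} to identify the kernel of $\rho$ with the kernel of the obstruction map $\psi$, which by Proposition~\ref{prop:long} equals the image of $\varphi$.

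First I would recall from Proposition~\ref{prop:long} that we already have an exact sequence
\[ \Pic^G(X) \stackrel{\varphi}{\longrightarrow} \Pic(X) \stackrel{\psi}{\longrightarrow} \Pic(G \times X)/p_2^*\Pic(X), \]
where $\psi(L)$ is the class of $\alpha^*(L)$. So the statement will be proved once I show $\Ker(\rho) = \Ker(\psi)$, i.e.\ that $\rho$ factors through $\psi$ via an \emph{injective} homomorphism $\bar{\rho} : \Pic(G \times X)/p_2^*\Pic(X) \to \Pic(G_{k(X)})$.

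For this, I would apply Lemma~\ref{lem:prod} with the given $X$ (normal integral by assumption) and with $Y = G$: since $G$ is a smooth connected algebraic group over $k$, it is smooth and integral, so the lemma gives an exact sequence
\[ \Pic(X) \stackrel{p_2^*}{\longrightarrow} \Pic(G \times X) \stackrel{(\id_G \times \eta)^*}{\longrightarrow} \Pic(G_{k(X)}) \]
(after identifying $X \times G$ with $G \times X$, so that $p_1^*$ in Lemma~\ref{lem:prod} becomes $p_2^*$ here). Exactness precisely says that $(\id_G \times \eta)^*$ kills $p_2^*\Pic(X)$ and that its kernel \emph{equals} $p_2^*\Pic(X)$. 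Equivalently, the induced map
\[ \bar{\rho} : \Pic(G \times X)/p_2^*\Pic(X) \longrightarrow \Pic(G_{k(X)}), \qquad [N] \longmapsto (\id_G \times \eta)^*(N), \]
is a well-defined injective homomorphism.

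Finally I would observe that $\rho = \bar{\rho} \circ \psi$ directly from the definitions: for $L \in \Pic(X)$,
\[ \rho(L) = (\id_G \times \eta)^* \alpha^*(L) = \bar{\rho}([\alpha^*(L)]) = \bar{\rho}(\psi(L)). \]
Since $\bar{\rho}$ is injective, $\Ker(\rho) = \Ker(\psi) = \Im(\varphi)$, which is exactly the asserted exactness of (\ref{eqn:longer}). There is no serious obstacle here; the only point requiring any care is the verification that $G$ satisfies the hypotheses of Lemma~\ref{lem:prod}, which reduces to the standard fact that a smooth connected scheme over a field is integral.
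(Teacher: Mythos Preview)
Your proof is correct and follows exactly the approach indicated in the paper, which simply says the lemma follows by combining Proposition~\ref{prop:long} and Lemma~\ref{lem:prod} (with $Y = G$). You have spelled out the details of that combination carefully, including the factorization $\rho = \bar{\rho} \circ \psi$ with $\bar{\rho}$ injective, which is precisely what the paper leaves implicit.
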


\begin{proof}
This follows by combining Proposition \ref{prop:long} and Lemma 
\ref{lem:prod} (with $Y = G$).
\end{proof}

Lemma \ref{lem:normal} together with Proposition \ref{prop:tors} 
imply the following:

\begin{theorem}\label{thm:normal}
Let $G$ be a connected linear algebraic group, and $X$ a normal 
$G$-scheme. Then $L^{\otimes n}$ is $G$-linearizable for any line 
bundle $L$ on $X$, where $n$ denotes the stable exponent of 
$\Pic(G)$.
\end{theorem}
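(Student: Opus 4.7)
The plan is to chain together the two results immediately preceding the theorem. The exact sequence (\ref{eqn:longer}) in Lemma \ref{lem:normal} identifies the obstruction to $G$-linearizability of a line bundle $L \in \Pic(X)$ as the class $\rho(L) \in \Pic(G_{k(X)})$. Proposition \ref{prop:tors} asserts that $\Pic(G_K)$ is killed by $n = n(G)$ for any field extension $K/k$; in particular, taking $K = k(X)$ (the function field of an irreducible component), we get $n \cdot \rho(L) = \rho(L^{\otimes n}) = 0$ in $\Pic(G_{k(X)})$. Exactness of (\ref{eqn:longer}) then yields that $L^{\otimes n}$ lies in the image of $\varphi$, i.e., admits a $G$-linearization.

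The one subtlety is that Lemma \ref{lem:normal} is stated for a \emph{normal integral} $G$-scheme, whereas the hypothesis of the theorem only says $X$ is normal. So first I would reduce to the integral case. Since $X$ is normal and locally noetherian, it is locally integrally closed; consequently its irreducible components are pairwise disjoint and coincide with its connected components, each a normal integral scheme. Because $G$ is connected, its action preserves each connected component. Writing $X = \bigsqcup_i X_i$, we obtain $\Pic(X) = \prod_i \Pic(X_i)$ and $\Pic^G(X) = \prod_i \Pic^G(X_i)$ compatibly with the forgetful map, so it suffices to linearize $L^{\otimes n}|_{X_i}$ on each $X_i$ separately.

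On each normal integral component, I apply Lemma \ref{lem:normal} to produce the obstruction in $\Pic(G_{k(X_i)})$, and then Proposition \ref{prop:tors} to kill it after tensoring with $n$. Assembling the resulting $G$-linearizations across the components gives the desired $G$-linearization of $L^{\otimes n}$ on $X$. Since no further calculation is needed, there is no real main obstacle here; the theorem is essentially a formal consequence of the two inputs, with the only care required being the reduction from normal to normal integral, and the observation that the same integer $n$ (which depends only on $G$) works uniformly across all components and all field extensions.
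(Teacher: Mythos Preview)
Your proof is correct and follows exactly the approach the paper intends: it simply states that the theorem follows from Lemma~\ref{lem:normal} together with Proposition~\ref{prop:tors}. Your added reduction from normal to normal integral (via the decomposition into connected components, each preserved by the connected group $G$) fills in a detail the paper leaves implicit.
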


As mentioned in the introduction, the normality assumption in 
Theorem \ref{thm:normal} cannot be omitted in view of examples 
of nodal or cuspidal curves. We now provide details on these:

\begin{example}\label{ex:nodal}
Let $X$ be the curve obtained from $\bP^1$ by identifying 
the points $0$ and $\infty$ to the nodal point $z$. 
Denote by $\eta : \bP^1 \to X$ the normalization. Then we have 
an exact sequence
\[ 
0 \longrightarrow k^* \stackrel{\delta}{\longrightarrow}
k^* \times k^*
\longrightarrow \Pic(X)
\stackrel{\eta^*}{\longrightarrow} \Pic(\bP^1) 
\longrightarrow 0,
\]
where $\delta$ denotes the diagonal (this follows e.g. from the 
Units-Pic sequence of \cite[Prop.~7.8]{Weibel}). 
This yields an exact sequence (of abstract groups)
\[ 0 \longrightarrow \bG_m(k) \longrightarrow \Pic(X)
\stackrel{\deg}{\longrightarrow} \bZ \longrightarrow 0, \]
where the degree map is identified with $\eta^*$; the corresponding 
sequence of group schemes is exact in view of \cite[Sec.~9.2]{BLR}).

Also, the automorphism group of $X$ is isomorphic to the stabilizer
of $\{ 0 ,\infty \}$ in $\Aut(\bP^1)$; this is the semi-direct product
of $\bG_m$ (fixing $0$ and $\infty$) with the group generated by an
involution exchanging these points. In particular, the connected
automorphism group of $X$ is just $\bG_m =: G$; it acts on the Picard 
group by preserving the degree.

Let $L$ be a line bundle on $X$ of degree $n \neq 0$. Then $L$ is 
not $G$-linearizable. Indeed, any linearization of
$\eta^*(L) \cong \cO_{\bP^1}(n)$ differs from the standard 
linearization (arising from the linear action of $G$ on 
$H^0(\bP^1,\cO_{\bP^1}(1))$ with weights $0$ and $1$) by a character
of $G$, i.e., an integer $m$. Thus, the linear action of $G$ on the 
fiber $\eta^*(L)_0$ (resp. $\eta^*(L)_{\infty}$) has weight $m + n$
(resp. $m$). But $\eta^*(L)_0 \cong L_z \cong \eta^*(L)_{\infty}$
as $G$-modules, if $L$ is $G$-linearized.

By Proposition \ref{prop:long}, we have an exact sequence
\[ 0 \longrightarrow \bZ 
\stackrel{\gamma}{\longrightarrow} \Pic^G(X)
\stackrel{\varphi}\longrightarrow 
\Pic(X) \stackrel{\psi}{\longrightarrow} 
\Pic(G \times X)/p_2^*\Pic(X). \]
Using the Units-Pic exact sequence again, one may check that
$\Pic(G \times X)/p_2^* \Pic(X) \cong \bZ$ and
this identifies $\psi$ with the degree map. Thus, the $G$-linearizable 
line bundles on $X$ are exactly those of degree $0$. 
\end{example}

\begin{example}\label{ex:cuspidal}
Let $X$ be the curve obtained from $\bP^1$ by pinching the fat 
point $Z := \Spec(\cO_{\bP^1,\infty}/\fm^2)$ to the cuspidal point $z$. 
Then the normalization $\eta: \bP^1 \to X$ yields an exact sequence
\[ 
0 \longrightarrow k^* \longrightarrow (\cO_{\bP^1,\infty}/\fm^2)^*
\longrightarrow \Pic(X)
\stackrel{\eta^*}{\longrightarrow} \Pic(\bP^1) 
\longrightarrow 0.
\]
In view of the isomorphisms
$(\cO_{\bP^1,\infty}/\fm^2)^*/k^* \cong (1 + \fm)/(1 + \fm^2) 
\cong \fm/\fm^2$,
this may be identified with the exact sequence
\[ 0 \longrightarrow \bG_a(k) \longrightarrow \Pic(X)
\stackrel{\deg}{\longrightarrow} \bZ \longrightarrow 0. \]
The corresponding sequence of group schemes is exact again.

Also, the automorphism group of $X$ is isomorphic to 
$\Aut(\bP^1,\infty)$, i.e., to the automorphism group of the 
affine line; this is the semi-direct product 
$\bG_a \ltimes \bG_m$, where $\bG_m$ acts on $\bG_a$ by scalar 
multiplication. This group acts on $\Pic(X)$ by preserving the
degree, and on $\fm/\fm^2$ via the linear action of the quotient 
$\bG_m$ with weight $-1$. In particular, no nontrivial line bundle 
of degree $0$ is $\bG_m$-linearizable. But there exists a
$\bG_m$-linearizable line bundle $L$ of degree $1$: indeed, the map 
$\bP^1 \to \bP^2$, $[x:y] \mapsto [x^3 : xy^2 : y^3]$
factors through an embedding $F : X \to \bP^2$, equivariant for 
the action of $\bG_m$ on $\bP^2$ with weights $0,2,3$;
thus, we may take $L = F^*\cO_{\bP^2}(1)$. In view of Proposition
\ref{prop:long}, this yields an exact sequence
\[ 0 \longrightarrow \bZ 
\stackrel{\gamma}{\longrightarrow} \Pic^{\bG_m}(X)
\stackrel{\deg}{\longrightarrow} \bZ \longrightarrow 0. \]

We now consider the action of $G := \bG_a$ on $X$. 
In characteristic $0$, no line bundle $L$ 
of nonzero degree is $G$-linearizable. Indeed, we may assume
that $L$ has positive degree $n$, and (replacing $L$ with a 
positive power) that $L$ is very ample. 
Then one may check that the image of the pull-back map 
$H^0(X,L) \to H^0(\bP^1, \cO_{\bP^1}(n))$ 
is a hyperplane, say $H$. If $L$ is $G$-linearizable, then $H$
is stable under the representation of $G$ in 
$H^0(\bP^1, \cO_{\bP^1}(n))$, the space of homogeneous polynomials 
of degree $n$ in $x,y$ on which $G$ acts via 
$t \cdot (x,y) := (x, y + t x)$. It follows that
$H^0(\bP^1, \cO_{\bP^1}(n))$ contains a unique $G$-stable
hyperplane: the span of the monomials 
$x^n,x^{n-1}y, \ldots, x y^{n-1}$, or equivalently, the kernel 
of the evaluation map at $\infty$. But this contradicts 
the assumption that $L$ is very ample.

In contrast, if $k$ has prime characteristic $p$, then $X$ 
has a $G$-linearizable line bundle $L_p$ of degree $p$. 
Consider indeed the morphism
\[ f : \bP^1 \longrightarrow \bP^{p - 1}, \quad
[x : y] \longmapsto 
[x^p : x^{p-2} y ^2 : x^{p-3} y^3: \cdots : y^p]. \]
Then $f$ factors through a morphism $F : X \to \bP^{p-1}$,
and its image generates a $G$-stable hyperplane of
$H^0(\bP^1,\cO_{\bP^1}(p))$, viewed as the space of homogeneous 
polynomials of degree $p$ in $x,y$ with the natural action of
$G$. Thus, we may take $L_p = F^* \cO_{\bP^{p-1}}(1)$.
Note that $F$ is an embedding if $p \geq 3$; if $p = 2$, then
the map
\[ \bP^1 \longrightarrow \bP^3, \quad
[x : y] \longmapsto [x^4 : x^2 y^2 : x y ^3 : y^4] \]
yields an embedding of $X$. In any case, $X$ admits an equivariant 
embedding in the projectivization of some finite-dimensional 
$G$-module.

We now describe the obstruction to linearization, in arbitrary
characteristic. Proposition \ref{prop:long} yields an exact
sequence
\[ 0 \longrightarrow \Pic^G(X) \longrightarrow \Pic(X)
\longrightarrow \Pic(G \times X)/p_2^*\Pic(X). \]
Moreover, one may check that 
$\Pic(G \times X)/p_2^*\Pic(X) \cong \cO(G \times Z)^*/ \cO(Z)^*
\cong k[t]/k$ 
and this identifies $\psi$ with the map 
$L \mapsto \deg(L)\,t$. In particular, a line bundle 
on $X$ is $G$-linearizable if and only if its degree is a multiple 
of the characteristic.
\end{example}

\section{The obstruction to linearization on seminormal varieties}
\label{sec:semi}

\subsection{The obstruction group}
\label{subsec:obs}

Given a scheme $X$, we analyze the quotient
$\Pic(G \times X)/p_2^* \Pic(X)$. Using the section 
$e_G \times \id_X : X \to G \times X$ of $p_2$, we may identify
$\Pic(G \times X)/p_2^* \Pic(X)$ with the kernel of 
$(e_G \times \id_X)^* : \Pic(G \times X) \to \Pic(X)$. This
identifies the obstruction map $\psi$ with 
$\alpha^* - p_2^* : \Pic(X) \to \Pic(G \times X)$.

Also, recall that $\Pic(X)$ is isomorphic to the \'etale 
cohomology group $H^1_{\et}(X, \bG_m)$
(see \cite[Prop.~II.4.9]{Milne}). Thus, the Leray spectral 
sequence for $p_2$ yields an exact sequence
\begin{equation}\label{eqn:leray}
0 \longrightarrow H^1_{\et}(X,p_{2*}(\bG_m)) \longrightarrow
\Pic(G \times X) \longrightarrow H^0_{\et}(X,R^1 p_{2*}(\bG_m)). 
\end{equation}
When $X$ is reduced, Lemma \ref{lem:unit} gives an isomorphism 
of \'etale sheaves on $X$
\begin{equation}\label{eqn:prod}
\mu : \wG \times \bG_m \stackrel{\cong}{\longrightarrow} p_{2*}(\bG_m).
\end{equation} 
In particular, the map $\wG  \to p_{2*}(\bG_m)$ defines
a map $H^1_{\et}(X,\wG) \to H^1_{\et}(X,p_{2*}(\bG_m))$ and hence
a map  
\begin{equation}\label{eqn:obs} 
c = c_{G,X} : H^1_{\et}(X,\wG) \longrightarrow \Pic(G \times X).
\end{equation}

\begin{lemma}\label{lem:leray}
Let $X$ be a reduced scheme. 

\smallskip

\noindent
{\rm (i)} The above map $c$ sits in an exact sequence
\begin{equation}\label{eqn:direct}
0 \longrightarrow H^1_{\et}(X,\wG) \times \Pic(X)
\stackrel{c \times p_2^*}{\longrightarrow}
\Pic(G \times X) \longrightarrow H^0_{\et}(X,R^1 p_{2*}(\bG_m)).
\end{equation}

\smallskip

\noindent
{\rm (ii)} $c$ is compatible with pull-backs on $G$ and $X$ in 
the following sense: for any homomorphism $h : G \to G'$ 
of connected algebraic groups, and any morphism $f : X \to X'$ 
of reduced schemes, we have a commutative diagram
\[ \CD
H^1_{\et}(X',\widehat{G'}) @>{c_{G',X'}}>> \Pic(G' \times X') \\
@VVV @VVV \\
H^1_{\et}(X,\wG) @>{c_{G,X}}>> \Pic(G \times X), \\
\endCD \]
where the vertical arrows are pull-backs.

\smallskip

\noindent
{\rm (iii)} For any geometric point $\bar{x}$ of $X$, 
there is an isomorphism
\begin{equation}\label{eqn:higher}
R^1 p_{2*}(\bG_m)_{\bar{x}} \cong \Pic(G \times \Spec(\cO_{X,\bar{x}})),
\end{equation}
where $\cO_{X,\bar{x}}$ denotes the strict henselization of 
the local ring $\cO_{X,x}$; this identifies the natural map 
$\Pic(G) \to R^1 p_{2*}(\bG_m)_{\bar{x}}$ with the pull-back map 
$p_1^* : \Pic(G) \to  \Pic(G \times \Spec(\cO_{X,\bar{x}}))$.
\end{lemma}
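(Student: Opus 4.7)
All three parts follow from a direct analysis of the Leray spectral sequence for the projection $p_2 : G \times X \to X$ with $\bG_m$-coefficients, combined with the identification $\Pic = H^1_{\et}(-,\bG_m)$ of \cite[Prop.~II.4.9]{Milne}.

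For (i), the low-degree five-term exact sequence of this spectral sequence is exactly (\ref{eqn:leray}). Substituting the isomorphism $\mu$ of (\ref{eqn:prod}) splits the left-hand term as $H^1_{\et}(X,\wG) \oplus \Pic(X)$, so it remains to identify the two components of the restricted edge map with $c$ and $p_2^*$ respectively. The inclusion $\bG_m \hookrightarrow p_{2*}\bG_m$ built into $\mu$ is the unit of adjunction $\bG_m \to p_{2*} p_2^* \bG_m$, so its contribution to the Leray edge map $H^1_{\et}(X,p_{2*}\bG_m) \to \Pic(G \times X)$ is the pull-back $p_2^*$ by the standard construction of edge maps; the $\wG$-component is $c$ by the very definition (\ref{eqn:obs}). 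The main delicate point is precisely this bookkeeping---verifying that the two summands really map by $p_2^*$ and $c$ rather than by some twist---but once this is checked, exactness on the right is inherited from (\ref{eqn:leray}) with no further work.

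For (ii), the Leray spectral sequence is natural for the commutative square of projections given by the pair of morphisms $h \times f : G \times X \to G' \times X'$ and $f : X \to X'$, and the isomorphism $\mu$ is natural in $(G,X)$ directly from its explicit formula $\mu(\chi,f)(g,x) = \chi(x)(g)\,f(x)$ of Lemma \ref{lem:unit}. Combining these two naturalities yields the commutative square in (ii).

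For (iii), \'etale cohomology commutes with filtered inverse limits of quasi-compact quasi-separated schemes with affine transition maps, so the stalk $R^1 p_{2*}(\bG_m)_{\bar{x}}$ is the filtered colimit of $\Pic(G \times U) = H^1_{\et}(G \times U,\bG_m)$ taken over \'etale neighborhoods $(U,\bar{u})$ of $\bar{x}$ in $X$, and this colimit equals $H^1_{\et}(G \times \Spec(\cO_{X,\bar{x}}),\bG_m) = \Pic(G \times \Spec(\cO_{X,\bar{x}}))$. The natural map $\Pic(G) \to R^1 p_{2*}(\bG_m)_{\bar{x}}$ is obtained by sheafifying the presheaf morphism $U \mapsto \bigl(p_1^* : \Pic(G) \to \Pic(G \times U)\bigr)$, so under the above identification it is precisely the pull-back $p_1^* : \Pic(G) \to \Pic(G \times \Spec(\cO_{X,\bar{x}}))$.
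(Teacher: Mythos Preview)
Your proof is correct and follows essentially the same route as the paper: part (i) combines the five-term Leray sequence (\ref{eqn:leray}) with the splitting (\ref{eqn:prod}), part (ii) is naturality of both ingredients (the paper phrases this via \v{C}ech cocycles rather than invoking naturality of the spectral sequence directly, but the content is identical), and part (iii) is exactly the limit statement \cite[Thm.~III.1.15]{Milne}. Your write-up is in fact more explicit than the paper's on the identification of the two summands of the edge map with $p_2^*$ and $c$, which is a useful addition.
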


\begin{proof}
(i) is obtained by combining (\ref{eqn:leray}) and (\ref{eqn:prod}).

(ii) By construction of $c$, it suffices to check that both
squares in the diagram
\[ \CD
H^1_{\et}(X', \widehat{G'}) @>>> H^1_{\et}(X',p'_{2*}(\bG_m)) 
@>>> \Pic(G' \times X') \\
@VVV @VVV @VVV \\
H^1_{\et}(X, \wG) @>>> H^1_{\et}(X,p_{2*}(\bG_m)) 
@>>> \Pic(G \times X) \\
\endCD \]
commute. We may view each group as a \v{C}ech cohomology group by 
\cite[Cor.~III.2.10]{Milne}.
Then the commutativity of the left square follows from the
compatibility of the map $\wG \to p_{2*}(\bG_m)$ with pull-backs,
and that of the right square is obtained by viewing
$H^1_{\et}(X,p_{2*}(\bG_m))$ as the subgroup of $\Pic(G \times X)$ 
consisting of those line bundles that are trivial on each 
$G \times U_i$ for some \'etale covering $(U_i \to X)$.

(iii) is a consequence of \cite[Thm.~III.1.15]{Milne}.
\end{proof}

When $G = \bG_m$ (so that $\wG$ is the constant sheaf $\bZ$) and 
$X$ is of finite type, the abelian group $H^1_{\et}(X, \bZ)$ is free 
of finite rank in view of \cite[Thm.~7.9]{Weibel}. Moreover, we have 
$H^0_{\et}(X,\bZ) = H^0_{\Zar}(X,\bZ) =: \bZ(X)$, and this abelian group 
is free of finite rank as well (see e.g. \cite[Ex.~7.1]{Weibel}. 
Also, $H^1_{\et}(X,\bZ) = 0$ if $X$ is normal, by 
\cite[Prop.~7.4, Thm.~7.5]{Weibel}. For a reduced scheme $X$ such 
that the normalization $\eta : \tilde{X} \to X$ is finite, the group 
$H^1_{\et}(X,\bZ)$ may be determined from $\eta$: consider indeed 
the conductor square 
\[ \CD
Y' @>>> \tilde{X} \\
@VVV  @V{\eta}VV \\
Y @>>> X \\
\endCD \]
(see e.g. \cite{Ferrand}). Then we have an exact sequence
\begin{equation}\label{eqn:lpic} 
0 \to \bZ(X) \to \bZ(\tilde{X}) \oplus \bZ(Y) \to \bZ(Y') 
\to H^1_{\et}(X,\bZ) \to 
H^1_{\et}(\tilde{X}, \bZ) \oplus H^1_{\et}(Y,\bZ) \to 
H^1_{\et}(Y',\bZ) 
\end{equation}
as follows from \cite[Thm.~7.6, Prop.~7.8]{Weibel}. For instance,
when $X$ is the nodal curve as in Example \ref{ex:nodal}, 
this yields $H^1_{\et}(X,\bZ) \cong \bZ$. 

These results extend readily to the case where $\bZ$ is replaced with 
any constant sheaf $\Lambda$ of free abelian groups of finite rank. 
Indeed, the natural map  
$H^i_{\et}(X,\bZ) \otimes_{\bZ} \Lambda \to H^i_{\et}(X,\Lambda)$ 
is then an isomorphism for all $i \geq 0$, since
$H^i_{\et}(X,\bZ^r) \cong H^i_{\et}(X,\bZ)^r$.

We now record variants of some of these results for locally constant sheaves:

\begin{lemma}\label{lem:fin}
Let $X$ be a scheme of finite type, and $\Lambda$ an \'etale 
sheaf of free abelian groups of finite rank on $\Spec(k)$. 

\smallskip

\noindent
{\rm (i)} The abelian group $\Lambda(X)$ is free of finite rank. 
Moreover, $H^1_{\et}(X,\Lambda)$ is finitely generated.

\smallskip

\noindent
{\rm (ii)} If $X$ is normal, then $H^1_{\et}(X,\Lambda)$ 
is finite.
\end{lemma}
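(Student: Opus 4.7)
The plan is to reduce everything to the case of constant coefficients, which is covered by the results of Weibel recalled just above. Since $\Lambda$ is an \'etale sheaf on $\Spec(k)$ with finitely generated stalk, it corresponds to a finitely generated free abelian group $M$ equipped with a continuous action of $\mathrm{Gal}(k^\sep/k)$. Continuity together with discreteness of $M$ force this action to factor through a finite quotient $\Gamma := \mathrm{Gal}(k'/k)$ for some finite Galois extension $k'/k$; after this base change, $\Lambda|_{X_{k'}}$ becomes the constant sheaf with value $M \cong \bZ^r$, and $X_{k'} \to X$ is a finite \'etale Galois cover with group $\Gamma$.

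For the first claim of (i), I would use that the restriction $\Lambda(X) \hookrightarrow \Lambda(X_{k'}) = H^0_{\et}(X_{k'},\bZ)^r$ is injective by descent, and that the target is free of finite rank because $X_{k'}$ is of finite type and so has finitely many connected components. Thus $\Lambda(X)$, as a subgroup of a free abelian group of finite rank, is itself such. For the second claim, I would feed the Galois cover into the Hochschild--Serre spectral sequence
\[ E_2^{p,q} = H^p(\Gamma, H^q_{\et}(X_{k'},\Lambda)) \Longrightarrow H^{p+q}_{\et}(X,\Lambda), \]
whose five-term exact sequence reads
\[ 0 \to H^1(\Gamma, H^0_{\et}(X_{k'},\Lambda)) \to H^1_{\et}(X,\Lambda) \to H^1_{\et}(X_{k'},\Lambda)^{\Gamma}. \]
The leftmost term is finite, being $H^1$ of a finite group with values in a finitely generated abelian group (hence finitely generated and annihilated by $|\Gamma|$ in positive degree). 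The rightmost term embeds into $H^1_{\et}(X_{k'},\bZ)^r$, which is free of finite rank by \cite[Thm.~7.9]{Weibel}. An extension of a finitely generated group by a finite one is finitely generated, yielding (i).

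For (ii), the morphism $X_{k'} \to X$ is \'etale (as $k'/k$ is separable), so normality of $X$ transfers to $X_{k'}$. Then \cite[Thm.~7.5]{Weibel} gives $H^1_{\et}(X_{k'},\bZ) = 0$, hence $H^1_{\et}(X_{k'},\Lambda) = 0$. The five-term sequence then collapses to an isomorphism $H^1_{\et}(X,\Lambda) \cong H^1(\Gamma, H^0_{\et}(X_{k'},\Lambda))$, which is finite as already noted. The only delicate point in this plan is setting up the initial reduction to a finite Galois cover cleanly; once that is done, the argument is a formal consequence of Weibel's results on constant-coefficient \'etale cohomology combined with standard properties of cohomology of finite groups.
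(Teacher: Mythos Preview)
Your proposal is correct and follows essentially the same approach as the paper: reduce to constant coefficients via a finite Galois extension $k'/k$ trivializing $\Lambda$, then use the Hochschild--Serre five-term exact sequence together with Weibel's results on $H^0_{\et}$ and $H^1_{\et}$ with $\bZ$-coefficients. The paper's write-up is terser (it identifies $\Lambda(X)$ with the $\Gamma$-invariants rather than just a subgroup, and omits the justification that $X_{k'}$ is normal), but the logical structure is the same.
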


\begin{proof}
(i) We may choose a finite Galois extension of fields $k'/k$
such that $\Lambda_{k'}$ is constant. Denoting by $\Gamma$ 
the Galois group, we have an isomorphism
$H^0(X,\Lambda) \cong H^0(X_{k'}, \Lambda_{k'})^{\Gamma}$.
Moreover, the Hochschild-Serre spectral sequence (see 
\cite[Thm.~III.2.20]{Milne}) yields an exact sequence
\[
0 \longrightarrow H^1(\Gamma, H^0_{\et}(X_{k'},\Lambda_{k'})) 
\longrightarrow H^1_{\et}(X,\Lambda) 
\longrightarrow H^1_{\et}(X_{k'},\Lambda_{k'})^{\Gamma}. 
\]
Since both abelian groups $H^0_{\et}(X_{k'},\Lambda_{k'})$ and 
$H^1_{\et}(X_{k'},\Lambda_{k'})$ are free of finite rank, this yields 
the assertions.

(ii) follows from the above exact sequence in view of the
vanishing of $H^1_{\et}(X_{k'},\Lambda_{k'})$. 
\end{proof}

Next, recall that a ring $R$ is called Nagata, or pseudo-geometric, 
if $R$ is noetherian and for any prime ideal $\fp$ of $R$ and any 
finite extension $L$ of the fraction field of $R/\fp$, the integral 
closure of $R$ in $L$ is a finite $R$-module. 
(This is equivalent to $R$ being noetherian and universally japanese,
in view of \cite[Thm.~36.5]{Nagata}; see also 
\cite[Thm.~IV.7.7.2]{EGA}). A scheme $X$ is Nagata if $X$ has an 
affine open covering by spectra of Nagata rings; this holds for example 
when $X$ is locally of finite type. 

We may now state our main technical result:

\begin{theorem}\label{thm:obs}
Let $X$ be a Nagata scheme.

\smallskip

\noindent
{\rm (i)} If $G$ is split and $X$ is connected and seminormal, 
then the map 
\[ p_1^* \times c \times p_2^*:  
\Pic(G) \times H^1_{\et}(X,\wG) \times \Pic(X) \longrightarrow 
\Pic(G \times X) \]
is an isomorphism.

\smallskip

\noindent
{\rm (ii)} If $G$ is linear and $X$ is geometrically seminormal,
then the cokernel of the map 
\[ c \times p_2^* : H^1_{\et}(X,\wG) \times \Pic(X) 
\longrightarrow \Pic(G \times X) \]
is killed by the stable exponent of $\Pic(G)$.
\end{theorem}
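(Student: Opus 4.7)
The plan is to exploit the exact sequence of Lemma~\ref{lem:leray}(i), which already shows that $c \times p_2^*$ is injective with image equal to the kernel of the edge map $\Pic(G \times X) \to H^0_{\et}(X, R^1 p_{2*}\bG_m)$. What remains for both parts is to control this edge map, and by Lemma~\ref{lem:leray}(iii) the essential input is an understanding of $\Pic(G_R)$ for $R = \cO_{X,\bar{x}}$, a strictly henselian local seminormal Nagata $k$-algebra.

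For part (i), the plan is to establish the key claim that $p_1^*\colon \Pic(G) \to \Pic(G_R)$ is an isomorphism whenever $G$ is split and $R$ is as above. Granting it, $R^1 p_{2*}\bG_m$ is the constant \'etale sheaf with value $\Pic(G)$; since $X$ is connected we get $H^0_{\et}(X, R^1 p_{2*}\bG_m) = \Pic(G)$, and the composition $\Pic(G) \xrightarrow{p_1^*} \Pic(G \times X) \to \Pic(G)$ is the identity, so the Leray sequence splits and yields (i) (the triviality of $p_1^*\Pic(G)$ with the image of $c \times p_2^*$ being verified by restriction to a geometric point of $X$, where both $H^1_{\et}(-,\wG)$ and $\Pic$ vanish while $\Pic(G)$ is preserved, using Lemma~\ref{lem:split}). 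The key claim itself I would prove by d\'evissage on $G$: (a) if $G$ is split unipotent, then $G \cong \bA^n$ as a variety, so $\Pic(G_R) = \Pic(R[x_1,\dots,x_n]) = \Pic(R) = 0$ by Traverso's homotopy invariance for the Picard group over seminormal rings; (b) if $G = T$ is a split torus $\bG_m^r$, iterating Weibel's formula $\Pic(A[t,t^{-1}]) = \Pic(A) \oplus H^1_{\et}(A,\bZ)$ over seminormal $A$ (stable under Laurent polynomial extensions), together with the vanishing of $H^1_{\et}(\bG_m^i \times_k \Spec R, \bZ)$ for all $i$ (because $\pi_1^{\et}$ of $\bG_m^i$ over the separably closed residue field of $R$ is profinite, hence has no nontrivial continuous map to $\bZ$), gives $\Pic(T_R) = 0$; (c) if $G$ is split reductive, I would extend Iversen's exact sequence (\ref{eqn:picG}) to the base $R$ by analyzing the $B_R$-torsor $G_R \to (G/B)_R$, and show $\Pic((G/B)_R) \cong \Pic(G/B)$ using that the Schubert divisors generating $\Pic(G/B)$ are already defined over $k$ and extend canonically to $R$ (combined with $\Pic(R) = 0$); (d) for a general split $G$, the extension $1 \to U \to G \to H \to 1$ makes $G \to H$ a Zariski locally trivial $U$-torsor (as $U$ is split unipotent), so homotopy invariance combined with (c) and Lemma~\ref{lem:split} gives $\Pic(G_R) = \Pic(H_R) = \Pic(H) = \Pic(G)$.

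For part (ii), the plan is to reduce to (i) by base change to a finite splitting field $k'/k$ for $G$. Since $X$ is geometrically seminormal, $X' := X_{k'}$ is seminormal, so (i) applied to each connected component of $X'$ yields that the cokernel of $c_{k'} \times (p_2)_{k'}^*$ in $\Pic(G_{k'} \times X')$ is isomorphic to $\Pic(G_{k'})$, which is killed by $m = m(G)$. Given $L \in \Pic(G \times X)$, decompose $L_{k'} = p_1^*M + c_{k'}(\alpha) + (p_2)_{k'}^*(\beta)$ by (i); then $mL_{k'}$ lies in the image of $c_{k'} \times (p_2)_{k'}^*$ over $k'$. Applying the norm $\rN_{k'/k}$ (compatible with $c$ and $p_2^*$, and satisfying $\rN_{k'/k}(L_{k'}) = L^{\otimes [k':k]}$ by Lemma~\ref{lem:tors}) shows that $m[k':k]\,L$ lies in the image of $c \times p_2^*$ over $k$. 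Ranging over all splitting fields $k'$ and taking $\gcd$ of $[k':k]$ gives the bound $m \cdot d = n$.

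The main obstacle I expect is case (c) of the key claim: extending Iversen's sequence (\ref{eqn:picG}) from a field to the base ring $R$ requires a relative analysis of the $B_R$-torsor $G_R \to (G/B)_R$ and of the Picard group of flag varieties over a local base, which is the most technical step (the other cases (a), (b), (d) reducing rather cleanly to known results of Traverso and Weibel). The norm-descent step in part (ii) should be less delicate once one verifies the compatibility of $c$ with norm maps on $H^1_{\et}$, paralleling Lemma~\ref{lem:leray}(ii) for pull-backs.
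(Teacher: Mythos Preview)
Your overall plan for part (i) --- using Lemma~\ref{lem:leray} to reduce to computing $\Pic(G \times \Spec R)$ for $R$ a seminormal strictly henselian local ring --- is exactly the paper's strategy, and the paper carries out your d\'evissage in essentially the same order (affine bundle for the unipotent part, torus torsor onto $G/B$, representability of $\Pic_{G/B}$). However, your justification in case (b) has a genuine gap. You argue that $H^1_{\et}(\bG_m^i \times \Spec R, \bZ) = 0$ because ``$\pi_1^{\et}$ is profinite, hence has no nontrivial continuous map to $\bZ$''. The identification $H^1_{\et}(Y,\bZ) = \Hom_{\rm cont}(\pi_1^{\et}(Y),\bZ)$ is valid only for normal (or geometrically unibranch) $Y$; since $R$ is merely seminormal, $\bG_m^i \times \Spec R$ need not be normal, and $\bZ$-torsors are then not governed by the profinite $\pi_1$ (the nodal curve, which is seminormal, has $H^1_{\et}(-,\bZ)\cong\bZ$). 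The paper supplies the missing ingredient in two places: inside Lemma~\ref{lem:torus} it invokes Weibel's invariance $\LPic(R[t,t^{-1}]) \cong \LPic(R)$ together with $\LPic(R)=0$ for $R$ henselian local; and Proposition~\ref{prop:homotopy} proves the general homotopy invariance $H^1_{\et}(F\times Y,\bZ)\cong H^1_{\et}(Y,\bZ)$ (for $F$ smooth geometrically connected and $Y$ henselian local) by Noetherian induction on the conductor square of the normalization. This is a substantive step your sketch is missing, and it is also what makes your case (c) go through cleanly: once you have Lemma~\ref{lem:torus} over $R$, Iversen's sequence extends without needing a separate analysis of Schubert divisors.

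For part (ii) your approach genuinely differs from the paper's. You propose a global norm argument: base-change to a splitting field $k'$, apply (i) there, and push back via $\rN_{k'/k}$. The paper instead works purely locally: it observes (Lemma~\ref{lem:locpic}(ii)) that $\Pic(G \times \Spec \cO_{X,\bar x})$ is $n$-torsion for every geometric point $\bar x$, so $R^1p_{2*}(\bG_m)$ is an $n$-torsion sheaf, and hence the cokernel in (\ref{eqn:direct}) is $n$-torsion. This is shorter and avoids the compatibility of $c$ with norm/transfer maps that your route requires (and which you correctly flag as needing verification). Your approach would likely work, but the stalk-wise argument is cleaner and sidesteps the issue entirely.
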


The proof will be given in the next two subsections. 
When $k$ has characteristic $p > 0$, the assumptions of 
(geometric) seminormality can be suppressed by tensoring with 
$\bZ[\frac{1}{p}]$, see Theorem \ref{thm:obsp}. 

When $G = \bG_a$, Theorem \ref{thm:obs} boils down 
to the isomorphism $\Pic(X \times \bA^1) \cong \Pic(X)$, 
which characterizes seminormality for reduced affine 
schemes (see \cite[Thm.~3.6]{Traverso} and \cite[Thm.~1]{Swan}).

\subsection{Some fibrations of seminormal schemes}
\label{subsec:seminormal}

From now on, we assume that all schemes under consideration are Nagata. 
This will allow us to apply results from \cite{Greco-Traverso}, 
where schemes are assumed to be locally noetherian and with 
finite normalization; also, we will use inductive arguments 
based on the Units-Pic sequence for a finite morphism of schemes, 
and the conductor square of the normalization.

We first record two easy observations:

\begin{lemma}\label{lem:des}
Let $f: X \to Y$ be a smooth surjective morphism. 
Then $X$ is seminormal if and only if $Y$ is seminormal.
\end{lemma}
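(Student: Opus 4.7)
My plan treats the two implications separately. In both, the schemes in question are automatically reduced, since smooth surjective morphisms preserve reducedness in the source (being flat with reduced fibers) and descend it to the target (being faithfully flat), so the discussion can focus on the seminormality condition itself.

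The implication \emph{$Y$ seminormal $\Rightarrow$ $X$ seminormal} is an instance of the stability of seminormality under smooth base change, a standard result of Greco--Traverso. It can also be deduced from first principles by factoring a smooth morphism \'etale-locally on $X$ as $X \to \bA^n_Y \to Y$ with the first arrow \'etale, and then combining two ingredients: \'etale morphisms preserve seminormality on the source, and $R$ seminormal implies $R[t]$ seminormal. Both ingredients are available in the references already cited in the paper.

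For the converse \emph{$X$ seminormal $\Rightarrow$ $Y$ seminormal}, I would argue by faithfully flat descent. Let $g : Y' \to Y$ be an integral bijective morphism inducing isomorphisms on residue fields, and form the base change $g' := g \times_Y X : X' \to X$. Then $g'$ is again integral, and I would check that bijectivity together with the residue-field condition pass to $g'$: for any $x \in X$ lying over $y = f(x)$, the fiber of $g'$ at $x$ equals $\Spec(\kappa(x) \otimes_{\kappa(y)} (B \otimes_A \kappa(y)))$, where $B \otimes_A \kappa(y)$ is a local Artinian $\kappa(y)$-algebra with nilpotent maximal ideal and residue field $\kappa(y)$; tensoring with $\kappa(x)$ preserves this structure and yields a local ring with unique point and residue field $\kappa(x)$. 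Seminormality of $X$ then forces $g'$ to be an isomorphism, and faithfully flat descent shows that $g$ itself is an isomorphism. The only mildly technical step is this scheme-theoretic verification of bijectivity for the pullback; the rest is a direct appeal to Greco--Traverso and to faithfully flat descent, and the Nagata hypothesis imposed at the start of the subsection enters only through the cited results.
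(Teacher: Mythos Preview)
Your argument is correct, with one point that deserves to be made explicit. When you test seminormality of $Y$ via $g: Y' \to Y$, the definition (as used in the paper and in Greco--Traverso/Swan) implicitly takes $Y'$ reduced --- equivalently, the ring map is an injection, or $Y'$ sits between $Y$ and its normalization. You should state this, and then note that $X' = X \times_Y Y'$ is reduced as well because $X' \to Y'$ is smooth; only then does seminormality of $X$ apply to $g'$. With that in place, the fiber computation and the descent step go through. (Under the Nagata hypothesis you may even take $g$ finite, which justifies calling the fiber Artinian; without it, ``local with nil maximal ideal'' is what you actually get, and that already suffices.)

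The paper's route is more uniform. It invokes a single result of Greco--Traverso (their Prop.~5.1): seminormalization commutes with smooth base change, so one has a cartesian square with $\sigma_X : X^+ \to X$ and $\sigma_Y : Y^+ \to Y$ as vertical arrows and $f$, $f^+$ as horizontal ones. Both implications then drop out at once --- if $\sigma_Y$ is an isomorphism so is its pullback $\sigma_X$, and conversely if $\sigma_X$ is an isomorphism then so is $\sigma_Y$ by faithfully flat descent of isomorphisms. Your approach uses the same two ingredients (Greco--Traverso for one direction, faithfully flat descent for the other) but unpacks the converse by hand via a test morphism rather than through the seminormalization functor. The paper's version is shorter; yours makes the descent direction essentially self-contained, independent of the functoriality of seminormalization.
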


\begin{proof}
By \cite[Prop.~5.1]{Greco-Traverso}, there is a cartesian square
\[ \CD
X^+ @>{f^+}>> Y^+ \\
@V{\sigma_X}VV @V{\sigma_Y}VV \\
X @>{f}>> Y,
\endCD \]
where the vertical arrows are the seminormalization maps.
So the assertion follows by descent of isomorphisms
(see \cite[Exp.~VIII, Cor.~5.4]{SGA1}).
\end{proof}

\begin{lemma}\label{lem:ac}
Let $X$ be a reduced $G$-scheme, and $\sigma : X^+ \to X$
the seminormalization. Then $X^+$ has a unique structure
of $G$-scheme such that $\sigma$ is equivariant.
\end{lemma}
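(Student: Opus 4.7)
The approach is to establish and apply a universal property of the seminormalization: any morphism $f: Y \to X$ from a reduced seminormal Nagata scheme $Y$ lifts uniquely to $f^+: Y \to X^+$ with $\sigma \circ f^+ = f$. To prove this, I would form the fibered product $Z := Y \times_X X^+$ with first projection $p_1: Z \to Y$. As a base change of $\sigma$, the morphism $p_1$ is integral, bijective, and induces isomorphisms on residue fields; these properties descend to $Z_{\red} \to Y$, which is therefore an isomorphism by the definition of seminormality of $Y$. Composing the resulting section $Y \cong Z_{\red} \hookrightarrow Z$ with the second projection $Z \to X^+$ produces the lift. For uniqueness, any two lifts yield a morphism $Y \to X^+ \times_X X^+$; since $X^+ \times_X X^+$ has the same underlying set as $X^+$ (fibers over $X$ being single reduced points, because $\sigma$ has trivial residue field extensions), its underlying reduced subscheme is $X^+$ embedded diagonally, so reducedness of $Y$ forces the two lifts to coincide.

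Granting this universal property, the construction of $\alpha^+$ is immediate. By Lemma \ref{lem:des} applied to the smooth surjection $G \times X^+ \to X^+$, the scheme $G \times X^+$ is seminormal; hence the morphism $\alpha \circ (\id_G \times \sigma): G \times X^+ \to X$ has a unique lift $\alpha^+: G \times X^+ \to X^+$ satisfying $\sigma \circ \alpha^+ = \alpha \circ (\id_G \times \sigma)$, which is exactly the equivariance of $\sigma$.

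To verify that $\alpha^+$ is a $G$-action, both axioms reduce to uniqueness of the lift. For the identity: $\alpha^+ \circ (e_G \times \id_{X^+})$ and $\id_{X^+}$ are two morphisms $X^+ \to X^+$ whose composites with $\sigma$ both equal $\sigma$, so they agree. For associativity, one compares $\alpha^+ \circ (\mu_G \times \id_{X^+})$ and $\alpha^+ \circ (\id_G \times \alpha^+)$ as morphisms $G \times G \times X^+ \to X^+$; their composites with $\sigma$ agree by associativity of $\alpha$, and $G \times G \times X^+$ is again seminormal by Lemma \ref{lem:des}, so uniqueness forces equality. Uniqueness of the $G$-structure on $X^+$ making $\sigma$ equivariant is built directly into the universal property, since any such action morphism is a lift of $\alpha \circ (\id_G \times \sigma)$.

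The main technical subtlety throughout is that fibered products such as $Y \times_X X^+$ and $X^+ \times_X X^+$ need not be reduced even when the factors are, which is why one must consistently pass to the underlying reduced subscheme before invoking the definition of seminormality.
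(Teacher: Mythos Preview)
Your proof is correct and follows a somewhat different route from the paper's. The paper applies \cite[Prop.~5.1]{Greco-Traverso} directly to both $\alpha$ and $p_2$: for each smooth surjection to $X$, the seminormalization of the source is the pullback of $\sigma$, giving a cartesian square. The square for $p_2$ identifies $(G\times X)^+ \cong G\times X^+$, and the square for $\alpha$ then furnishes the lift $\alpha^+$. Uniqueness and the action axioms are dispatched by noting that $\sigma$ is an isomorphism over a dense open of $X$, so two morphisms from the reduced scheme $G\times X^+$ (or $G\times G\times X^+$) into the separated scheme $X^+$ that agree there must agree everywhere. Your approach instead isolates the universal property of seminormalization and uses it uniformly for existence, uniqueness, and both axioms; it only invokes Lemma~\ref{lem:des} for the weaker fact that $G\times X^+$ is seminormal. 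This is cleaner conceptually and more self-contained, while the paper's argument is shorter because it can cite the cartesian-square result wholesale. Your handling of uniqueness via the reduced structure on $X^+\times_X X^+$ is a pleasant alternative to the density argument.
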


\begin{proof}
The action map $\alpha : G \times X \to X$ is smooth and
surjective. Thus, we obtain a cartesian square as in the
proof of Lemma \ref{lem:des}
\[ \CD
(G \times X)^+ @>>> X^+ \\
@VVV @VVV \\
G \times X @>{\alpha}>> X,
\endCD \]
where the vertical maps are the seminormalizations.
Replacing $\alpha$ with the projection $p_2 : G \times X \to X$
and arguing similarly, we obtain an isomorphism
$(G \times X)^+ \stackrel{\cong}{\longrightarrow} G \times X^+$.
Thus, $\alpha$ lifts to a morphism 
$\alpha^+: G \times X^+ \to X^+$. Since $\sigma$ restricts to
an isomorphism on an open dense subscheme of $X$, we see that
$\alpha^+$ is unique and satisfies the properties of an algebraic
group action.
\end{proof}

Lemma \ref{lem:des} applies to any torsor under an algebraic 
group, and also to any affine bundle, i.e., a morphism $f : X \to Y$, 
where $Y$ is covered by Zariski open subsets $U_i$ such
that $f^{-1}(U_i) \cong U_i \times \bA^n$ as schemes over $U_i$. 
We now record an easy property of such bundles:

\begin{lemma}\label{lem:affine}
Let $f: X \to Y$ be an affine bundle, where $Y$ is seminormal. 
Then the pull-back maps $\cO(Y)^* \to \cO(X)^*$ and 
$\Pic(Y) \to \Pic(X)$ are isomorphisms.
\end{lemma}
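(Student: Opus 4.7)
The plan is to compute the Zariski direct image sheaves on $Y$ and feed them into the low-degree Leray exact sequence for $f$ with coefficients in $\bG_m$. Concretely, it suffices to establish that the canonical map $\bG_{m,Y} \to f_*\bG_{m,X}$ is an isomorphism and that $R^1 f_*\bG_{m,X} = 0$. Granting this, the equality $\cO(X)^* = H^0(X,\bG_m) = H^0(Y, f_*\bG_m) = \cO(Y)^*$ handles the units, while the five-term Leray sequence
\[
0 \longrightarrow H^1(Y, f_*\bG_m) \longrightarrow H^1(X, \bG_m) \longrightarrow H^0(Y, R^1 f_*\bG_m)
\]
collapses to the desired isomorphism $\Pic(Y) \cong \Pic(X)$.

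To verify the two sheaf identities I work stalk-by-stalk on $Y$. Fix $y \in Y$, choose a Zariski open neighbourhood $U$ over which $f$ trivialises (so that $f^{-1}(U) \cong U \times_k \bA^n$), and set $R := \cO_{Y,y}$. Pulling back gives $f^{-1}(\Spec R) \cong \Spec R[t_1,\dots,t_n]$, so the stalks in question are $R[t_1,\dots,t_n]^*$ and $\Pic(R[t_1,\dots,t_n])$. Seminormality, the Nagata property, and reducedness all pass to localisations, so $R$ is a seminormal, reduced, Nagata local ring; in particular $\Pic(R) = 0$. Since $R$ is reduced, units in $R[t_1,\dots,t_n]$ already lie in $R^*$, which is the stalk of $\bG_{m,Y}$ at $y$. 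Iterating the Traverso--Swan theorem (\cite[Thm.~3.6]{Traverso}, \cite[Thm.~1]{Swan}) shows that each pullback $\Pic(R) \to \Pic(R[t_1,\dots,t_n])$ is an isomorphism; combined with $\Pic(R) = 0$, this gives the vanishing of $(R^1 f_*\bG_m)_y$.

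The only substantive ingredient is the Traverso--Swan characterisation of seminormality via $\Pic(R) \cong \Pic(R[t])$; everything else is bookkeeping with the Leray spectral sequence together with standard local behaviour of seminormality and the Nagata property. The main point to check carefully is that the stalkwise identifications arise from the canonical inclusion $R^* \hookrightarrow R[t_1,\dots,t_n]^*$, so that the global isomorphisms coincide with the pullback maps of the lemma rather than being merely abstract matches.
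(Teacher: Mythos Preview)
Your proof is correct and follows essentially the same route as the paper: reduce to the local statements $f_*\bG_m \cong \bG_{m,Y}$ and $R^1 f_*\bG_m = 0$, and verify these via Traverso's theorem $\Pic(A) \cong \Pic(A[t_1,\dots,t_n])$ for seminormal $A$. The only point worth making explicit is your identification of the stalk $(R^1 f_*\bG_m)_y$ with $\Pic(\cO_{Y,y}[t_1,\dots,t_n])$: this uses that $\Pic$ commutes with filtered colimits of rings, which is standard but is a genuine step. The paper sidesteps this by arguing directly that the presheaf $U \mapsto \Pic(U \times \bA^n)$ already has trivial stalks, since for affine $U$ one has $\Pic(U \times \bA^n) \cong \Pic(U)$ and any line bundle on $Y$ trivialises on a smaller affine neighbourhood; but this is a cosmetic difference, not a different strategy.
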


\begin{proof}
Recall that $\cO(X)^* = H^0_{\Zar}(X, \cO_X^*)$, 
$\Pic(X) = H^1_{\Zar}(X,\cO_X^*)$, and likewise for $Y$. 
In view of the Leray spectral sequence for $f$ in the Zariski 
topology, it suffices to show that the natural map 
$\cO_Y^* \to f_*(\cO_X^*)$ is an isomorphism, and 
$R^1 f_*(\cO_X^*) = 0$. For this, we may assume that $f$ is 
the projection $p_1: Y \times \bA^n \to Y$. Let $U$ be an
open subscheme of $Y$, and $A := \cO(U)$. Since $A$ is reduced,
we obtain
$\cO(f^{-1}(U))^* \cong A[t_1,\ldots,t_n]^* = A^*$.
If $U$ is affine, then the pull-back map 
$\Pic(A) \to \Pic(A[t_1,\ldots,t_n])$ 
is an isomorphism, by seminormality and \cite[Thm.~3.6]{Traverso}. 
So the presheaf on $Y$ given by $U \mapsto \Pic(U \times \bA^n)$ 
has trivial stalks (since every line bundle is trivial on some 
affine neighborhood of each point). Thus, the sheaf associated
with this presheaf is trivial as well, i.e., 
$R^1 p_{1*}(\cO_{Y \times \bA^n}^*) =0$. 
\end{proof}

Next, we consider torsors under split tori; recall that any such
torsor for the fppf topology is locally trivial for the Zariski
topology.  

\begin{lemma}\label{lem:torus}
Let $f : X \to Y$ be a torsor under a split torus $T$, where
$X$ is seminormal and connected. Then there is an exact sequence
\begin{equation}\label{eqn:torus} 
0 \longrightarrow \cO(Y)^* \stackrel{f^*}{\longrightarrow} 
\cO(X)^* \stackrel{\chi}{\longrightarrow}
\wT \stackrel{\gamma}{\longrightarrow}
\Pic(Y) \stackrel{f^*}{\longrightarrow} \Pic(X)
\stackrel{\chi}{\longrightarrow} H^1_{\et}(Y,\wT),
\end{equation}
where $\gamma$ denotes the characteristic homomorphism that
assigns to any character of $T$, the class of the associated 
line bundle on $Y$.
\end{lemma}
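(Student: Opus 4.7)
The plan is to realize the desired exact sequence as the long exact cohomology sequence of a short exact sequence of \'etale sheaves on $Y$. First observe that since $f$ is smooth surjective with $X$ seminormal and connected, $Y$ is seminormal by Lemma \ref{lem:des} and connected; in particular $H^0_{\et}(Y,\wT) = \wT$, since $\wT$ is a constant sheaf when $T$ is split. The crucial step is to construct a short exact sequence of \'etale sheaves on $Y$,
\[
0 \longrightarrow \bG_m \longrightarrow f_*\bG_m \stackrel{w}{\longrightarrow} \wT \longrightarrow 0,
\]
where $\bG_m \to f_*\bG_m$ is pull-back of units and $w$ is the weight homomorphism obtained from Lemma \ref{lem:weight} applied to the $T$-torsor $f_V : X_V \to V$ for each connected \'etale $V \to Y$ (with $X_V$ reduced, and connected since $T$ is connected). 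Exactness on the left and in the middle amounts to the descent identity $\cO(V)^* = \cO(X_V)^{*T}$ for the torsor $f_V$. Surjectivity on the right is checked after \'etale-locally trivializing the torsor: when $X_V \cong V \times T$, the function $(v,t) \mapsto \chi(t)$ is a unit on $X_V$ of weight $\chi$.

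Next I would establish the vanishing $R^1 f_*\bG_m = 0$. The stalk at a geometric point $\bar y$ of $Y$ is $\Pic(X \times_Y \Spec R)$ with $R := \cO_{Y,\bar y}$ (the strict henselization); since $f$ is a smooth $T$-torsor over the strictly henselian $R$ it trivializes, so this stalk equals $\Pic(R[t_1^{\pm 1}, \ldots, t_r^{\pm 1}])$, where $r$ is the rank of $T$. Now $R$ is seminormal (seminormality being preserved under \'etale localization), local, and strictly henselian, so $\Pic(R) = 0$ and $H^1_{\et}(\Spec R, \bZ) = 0$. Iterating Weibel's isomorphism $\Pic(S[t^{\pm 1}]) \cong \Pic(S) \oplus H^1_{\et}(\Spec S, \bZ)$ for seminormal $S$---checking at each stage that the intermediate Laurent extension remains seminormal (by Traverso's theorem on polynomial rings plus localization) and has vanishing first \'etale $\bZ$-cohomology (by smooth base change from the separably closed residue field of $R$)---yields $\Pic(R[t_1^{\pm 1}, \ldots, t_r^{\pm 1}]) = 0$.

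With $R^1 f_*\bG_m = 0$, the Leray spectral sequence for $f$ identifies $H^1_{\et}(Y, f_*\bG_m)$ with $\Pic(X)$. Combined with $H^0_{\et}(Y, f_*\bG_m) = \cO(X)^*$ and $H^0_{\et}(Y, \wT) = \wT$, the long exact cohomology sequence attached to the short exact sequence above produces exactly the desired sequence. Identifying the connecting map $\wT \to \Pic(Y)$ with $\gamma$ is a \v{C}ech-cocycle computation: lift $\chi \in \wT$ locally to a unit on $X_V$ of weight $\chi$; the $T$-invariant ratios on overlaps descend to $Y$ and give the transition cocycle of the associated bundle $L_\chi = X \times^T k_\chi$. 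The identification of $\Pic(Y) \to \Pic(X)$ with $f^*$ is immediate from the definition of $\bG_m \to f_*\bG_m$ as pull-back. The principal obstacle in this approach is the iterated vanishing of $R^1 f_*\bG_m$: while the rank-one case is a direct consequence of Weibel's theorem, the higher-rank case requires careful bookkeeping to propagate seminormality and triviality of $H^1_{\et}(\cdot, \bZ)$ through the successive Laurent extensions.
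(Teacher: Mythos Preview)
Your approach is essentially identical to the paper's: the same short exact sequence of \'etale sheaves on $Y$, the same use of the Leray spectral sequence to identify $H^1_{\et}(Y, f_*\bG_m)$ with $\Pic(X)$ once $R^1 f_*\bG_m = 0$, and the same reduction of that vanishing to $\Pic(A[t_1^{\pm1},\ldots,t_r^{\pm1}]) = 0$ for $A$ the strict henselization at a geometric point.

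The one point where your argument is not quite right is the justification of the vanishing of $H^1_{\et}(-,\bZ)$ at the intermediate Laurent extensions. You invoke ``smooth base change from the separably closed residue field of $R$'', but the smooth base change theorem in \'etale cohomology requires torsion coefficients and does not apply to the constant sheaf $\bZ$; there is no general comparison between $H^1_{\et}(\Spec A[t_1^{\pm1},\ldots,t_i^{\pm1}],\bZ)$ and the cohomology of the special fiber. The paper handles this step differently: it cites \cite[Thm.~2.4]{Weibel}, which gives $\LPic(R) \cong \LPic(R[t,t^{-1}])$ for any commutative ring $R$ (this is the statement that $\LPic$ is a contracted functor). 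Combined with \cite[Thm.~2.5]{Weibel} ($\LPic(A)=0$ for $A$ henselian local) and the seminormality of each successive Laurent extension, this yields directly
\[
\Pic(A[t_1^{\pm1},\ldots,t_r^{\pm1}]) \;\cong\; \Pic(A) \oplus \bigoplus_{i=1}^r \LPic(A) \;=\; 0.
\]
So your bookkeeping concern is legitimate, but the fix is to quote Weibel's contraction result rather than smooth base change.
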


\begin{proof}
By Lemma \ref{lem:weight}, we have a short exact sequence of 
\'etale sheaves on $Y$
\[ 0 \longrightarrow \bG_m \longrightarrow f_*(\bG_m)
\stackrel{\chi}{\longrightarrow} \wT \longrightarrow 0. \]
The associated long exact sequence of \'etale cohomology 
begins with (\ref{eqn:torus}), except that $\Pic(X)$ is
replaced with $H^1_{\et}(Y,f_*(\bG_m))$. Also, the Leray spectral 
sequence for $f$ yields an exact sequence 
\[ 0 \longrightarrow H^1_{\et}(Y,f_*(\bG_m)) \longrightarrow
\Pic(X) = H^1_{\et}(X,\bG_m) \longrightarrow
H^0_{\et}(Y,R^1f_*(\bG_m)). \]
To complete the proof, it suffices to show that 
$R^1 f_*(\bG_m) = 0$. As in the proof of Lemma \ref{lem:leray},
this is equivalent to the assertion that 
$\Pic(T \times \Spec(\cO_{Y,\bar{y}})) = 0$ 
for any geometric point $\bar{y}$ of $Y$. Since $T \cong \bG_m^n$
for some positive integer $n$, this amounts to
$\Pic(A[t_1,\ldots,t_n,t_1^{-1},\ldots,t_n^{-1}]) = 0$, where
$A$ denotes the henselian local ring $\cO_{Y,\bar{y}}$.

This vanishing follows by combining results of \cite{Weibel}.
More specifically, there is an exact sequence for any commutative
ring $R$
\[ 0 \longrightarrow \Pic(R) \longrightarrow 
\Pic(R[t]) \times \Pic(R[t^{-1}]) \longrightarrow
\Pic(R[t,t^{-1}]) \longrightarrow \LPic(R)
\longrightarrow 0, \]
where $\LPic(R) \cong H^1_{\et}(\Spec(R),\bZ)$ (see 
\cite[Lem.~1.5.1, Thm.~5.5]{Weibel}). If $R$ is seminormal,
then the maps $\Pic(R) \to \Pic(R[t])$ and 
$\Pic(R) \to \Pic(R[t^{-1}])$ are isomorphisms by 
\cite[Thm.~3.6]{Traverso} again; thus, we obtain an isomorphism
\[ \Pic(R[t,t^{-1}])/\Pic(R) \cong \LPic(R). \]
Since the (injective) map $\Pic(R) \to \Pic(R[t,t^{-1}])$ is split 
by the evaluation at $t = 1$, this yields an isomorphism
\[ \Pic(R[t,t^{-1}]) \cong \Pic(R) \oplus \LPic(R). \]
Also, $\LPic(R) \cong \LPic(R[t,t^{-1}])$ by 
\cite[Thm.~2.4]{Weibel}. Since $R[t,t^{-1}]$ is seminormal
as well, we obtain by induction
\[ \Pic(R[t_1,\ldots,t_n, t_1^{-1},\ldots,t_n^{-1}]) 
\cong \Pic(R) \oplus \bigoplus_{i=1}^n \LPic(R). \]
Moreover, $\Pic(A) = 0$ since $A$ is local, and $\LPic(A) = 0$
by \cite[Thm.~2.5]{Weibel}.
\end{proof}

\begin{remarks}
(i) The argument of Lemma \ref{lem:torus} yields a map
$\delta : H^1_{\et}(Y,\wT) \to H^2_{\et}(Y,\bG_m)$
such that the sequence 
\[
\Pic(X) \stackrel{\chi}{\longrightarrow} H^1_{\et}(Y,\wT)
\stackrel{\delta}{\longrightarrow} H^2_{\et}(Y,\bG_m)
\]
is exact; recall that $H^2_{\et}(Y,\bG_m)$ is the cohomological Brauer 
group $\Br'(Y)$. One may check that $\delta$ is the composition
\[
H^1_{\et}(Y,\wT) = H^1_{\et}(Y,\bZ) \otimes_{\bZ} \wT 
\stackrel{\id \otimes \gamma}{\longrightarrow}
H^1_{\et}(Y,\bZ) \otimes_{\bZ} H^1_{\et}(Y,\bG_m)
\stackrel{\cup}{\longrightarrow} H^2_{\et}(Y,\bG_m),
\]
where $\cup$ denotes the cup product. 

\smallskip

\noindent
(ii) For a torsor $f : X \to Y$ under an arbitrary connected linear
algebraic group $G$ (assumed in addition to be reductive if $k$ is 
imperfect), one has a longer exact sequence
\[
0 \to \cO(Y)^* \to \cO(X)^* \to \wG(k) \to \Pic(Y) \to \Pic(X)
\to \Pic(G) \to \Br'(Y) \to \Br'(X)
\]
when $Y$ is a smooth variety (see \cite[Prop.~6.10]{Sansuc}).

\smallskip

\noindent
(iii) Returning to the setting of seminormal schemes, the exact 
sequence (\ref{eqn:torus}) can be generalized to torsors under
connected algebraic groups. This will not be needed here, and 
is postponed to \cite{Brion}.
\end{remarks}

\subsection{Proof of the main result}
\label{subsec:proof}

Recall our standing assumption that all schemes are Nagata.
A key ingredient of the proof of Theorem \ref{thm:obs} is 
the following invariance property:  

\begin{proposition}\label{prop:homotopy}
Let $f : X \to Y$ be a locally trivial fibration for the \'etale 
topology, with smooth and geometrically connected fibers. Then 
$f^* : H^i_{\et}(Y,\Lambda) \to H^i_{\et}(X,\Lambda)$ 
is an isomorphism for $i = 0,1$ and any \'etale sheaf $\Lambda$
of free abelian groups of finite rank on $\Spec(k)$.
\end{proposition}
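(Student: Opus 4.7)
The plan is to reduce the proposition to the case of constant coefficients $\Lambda = \bZ$, then apply the Leray spectral sequence for $f$ and compute the higher direct images via the local triviality of the fibration.

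First, I choose a finite Galois extension $k'/k$ with group $\Gamma$ such that $\Lambda_{k'}$ is constant, hence isomorphic to $\bZ^r$ for some $r$. Since base change to $k'$ preserves local triviality, smoothness and geometric connectedness of the fibers of $f$, and since the Hochschild-Serre spectral sequence (already invoked in the proof of Lemma \ref{lem:fin}) is compatible with $f^*$, it suffices to prove the assertion for $\Lambda = \bZ$ applied to $f_{k'} \colon X_{k'} \to Y_{k'}$. Using the identification $H^i_{\et}(-,\bZ^r) \cong H^i_{\et}(-,\bZ)^r$, we are reduced to proving the statement for $\Lambda = \bZ$.

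Next, I apply the Leray spectral sequence
\[
H^p_{\et}(Y, R^q f_* \bZ) \Longrightarrow H^{p+q}_{\et}(X,\bZ).
\]
The conclusion in degrees $0$ and $1$ will follow from the two assertions: (a) the natural map $\bZ_Y \to f_*\bZ_X$ is an isomorphism of \'etale sheaves, and (b) $R^1 f_* \bZ_X = 0$. Both conditions may be checked on stalks at geometric points, so by local triviality it suffices to treat the case $f = p_2 \colon F \times Y \to Y$ with $F$ smooth and geometrically connected. For a geometric point $\bar{y}$ of $Y$, writing $S := \Spec(\cO_{Y,\bar{y}}^{\sh})$, the stalk is
\[
(R^q f_* \bZ)_{\bar{y}} \;\cong\; H^q_{\et}(F \times S, \bZ).
\]
For (a), both $F$ (after base change to the separably closed residue field of $S$) and $S$ are connected, so $F \times S$ is connected and $H^0_{\et}(F \times S, \bZ) = \bZ$, matching $\bZ_{Y,\bar{y}}$.

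The heart of the proof is (b): the vanishing $H^1_{\et}(F \times S, \bZ) = 0$ for $F$ smooth geometrically connected and $S$ strictly henselian local. My approach is to use Weibel's identification $H^1_{\et}(-, \bZ) \cong \LPic(-)$ from \cite[Thm.~5.5]{Weibel}, together with the key inputs that $F$ is normal, hence $H^1_{\et}(F,\bZ) = 0$ by \cite[Prop.~7.4]{Weibel}, and that $S$ is strictly henselian, hence $H^p_{\et}(S, \cF) = 0$ for all $p \geq 1$ and any sheaf $\cF$. Combining these vanishings via the Leray spectral sequence for the projection $F \times S \to F$ (whose higher direct images are controlled by the strict henselian nature of $S$), or equivalently via the explicit behaviour of $\Pic$ under Laurent polynomial extensions over seminormal rings as developed in \cite[Sec.~2, Sec.~5]{Weibel}, yields the desired vanishing.

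The main obstacle is item (b). The standard proper and smooth base change theorems in \'etale cohomology do not directly apply to the non-torsion sheaf $\bZ$, so one cannot simply identify $(R^1 f_* \bZ)_{\bar{y}}$ with $H^1_{\et}$ of the geometric fibre. Instead, the argument must proceed through the explicit $\Pic$-theoretic description of $H^1_{\et}(-,\bZ)$ and the interaction of $\LPic$ with products of the form (smooth variety) $\times$ (strictly henselian local scheme); a subtle point is that $S$ need not be normal or even seminormal in general, so one has to rely on the vanishing of higher \'etale cohomology of $S$ rather than on any regularity property of its local ring.
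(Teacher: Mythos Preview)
Your reduction steps are exactly those of the paper: pass to constant $\bZ$ via Hochschild--Serre, apply Leray for $f$, reduce (a) and (b) to the product case $F \times S$ with $S$ the spectrum of a strictly henselian local ring, and handle (a) by connectedness. The gap is entirely in (b), and neither of the two routes you sketch actually closes it.

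The Leray spectral sequence for $p_1 \colon F \times S \to F$ does not help: the stalk of $R^q p_{1*}\bZ$ at a geometric point $\bar{x}$ of $F$ is $H^q_{\et}(\Spec(\cO_{F,\bar{x}}^{\sh}) \times_k S,\bZ)$, a product of two henselian local schemes that is in general neither local nor normal, so you are no better off than before. The phrase ``controlled by the strict henselian nature of $S$'' would require a base change theorem for $\bZ$-coefficients that, as you yourself note, is not available. Your alternative via Laurent polynomials and seminormality is also blocked: $S$ need not be seminormal, and $F$ is not a torus, so Weibel's formulas for $\Pic$ of Laurent extensions do not apply directly.

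The paper's argument for (b) is a Noetherian induction on the local ring $A$. One may assume $A$ reduced; then take the normalization $\tilde{Y} \to Y = \Spec(A)$ and its conductor square with conductor $S \subsetneq Y$ and preimage $S'$. Since $F$ is smooth, $F \times \tilde{Y}$ is normal and hence $H^1_{\et}(F \times \tilde{Y},\bZ) = 0$; since $S$ is again henselian local but strictly smaller, the inductive hypothesis gives $H^1_{\et}(F \times S,\bZ) = 0$. Plugging into the conductor exact sequence (\ref{eqn:lpic}) for the product, the remaining point is the surjectivity of $\bZ(F \times \tilde{Y}) \to \bZ(F \times S')$, which (using that $F \times Z$ is connected whenever $Z$ is) reduces to the surjectivity of $\bZ(\tilde{Y}) \to \bZ(S')$, established at the end of the proof of \cite[Thm.~2.5]{Weibel}. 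This induction on the singularities of $A$ is the missing idea in your proposal.
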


\begin{proof}
Note that the assumptions on $f$ still hold after base change by
a finite Galois extension of fields $k'/k$. Choose such an
extension so that $\Lambda_{k'}$ is trivial, and denote by $\Gamma$
the Galois group. Then the Hochschild-Serre spectral sequence 
yields a commutative diagram with exact rows 
\[ \CD
0 \to & H^1(\Gamma,H^0_{\et}(Y_{k'},\Lambda_{k'})) &
\to H^1_{\et}(Y,\Lambda) &
\to H^0(\Gamma,H^1_{\et}(Y_{k'},\Lambda_{k'})) &
\to H^2(\Gamma,H^0_{\et}(Y_{k'},\Lambda_{k'})) \\
& \downarrow & \downarrow & \downarrow 
& \downarrow \\
0 \to & H^1(\Gamma,H^0_{\et}(X_{k'},\Lambda_{k'})) &
\to H^1_{\et}(X,\Lambda) &
\to H^0(\Gamma,H^1_{\et}(X_{k'},\Lambda_{k'})) &
\to H^2(\Gamma,H^0_{\et}(X_{k'},\Lambda_{k'})), \\
\endCD \]
where the vertical arrows are pull-backs by $f$. As a consequence,
we may assume that $\Lambda$ is constant. Clearly, we may further 
assume that $\Lambda = \bZ$.

Next, using the Leray spectral sequence for $f$, it suffices to 
show that the natural map $\bZ \to f_*(\bZ)$ is an isomorphism,
and $R^1 f_*(\bZ) = 0$. In view of the local triviality assumption,
we may assume that $f$ is the projection $F \times Y \to Y$,
where $F$ is smooth and geometrically connected.
By \cite[Thm.~III.1.15]{Milne}, we are reduced to checking that 
whenever $Y = \Spec(A)$ for a henselian local ring $A$, we have
\[ \bZ(F \times Y) = \bZ \quad {\rm and} \quad
H^1_{\et}(F \times Y,\bZ) = 0. \] 
Since $Y$ is connected and $F$ is geometrically connected,
$F \times Y$ is connected by \cite[Cor.~II.4.5.8]{EGA}; this yields
the former displayed equality. The latter displayed equality is proved 
in \cite[2.5, 5.5]{Weibel} in the case where $F = \bG_m$; we argue 
along the same lines in our setting. We may assume that $Y$ is 
reduced by using \cite[Rem.~III.1.6]{Milne}. Consider the 
normalization map $\eta : \tilde{Y} \to Y$ and its conductor square
\[ \CD
S' @>>> \tilde{Y} \\
@VVV  @V{\eta}VV \\
S @>>> Y. \\
\endCD \] 
Note that $F \times \tilde{Y}$ is normal, since $F$ is smooth; 
then one easily checks that the normalization of $F \times Y$ is
$\id_F \times \eta: F \times \tilde{Y} \to F \times Y$, with conductor
$F \times S$. In other words, the conductor square of the normalization 
of $F \times Y$ is 
\[ \CD
F \times S' @>>> F \times \tilde{Y}\\
@VVV  @V{\id_F \times \eta}VV \\
F \times S @>>> F \times Y. \\
\endCD \]
So (\ref{eqn:lpic}) yields an exact sequence
\[ \bZ(F \times \tilde{Y}) \oplus \bZ(F \times S) \to
\bZ(F \times S') \to H^1_{\et}(F \times Y,\bZ) \to
H^1_{\et}(F \times \tilde{Y}, \bZ) \oplus H^1_{\et}(F \times S,\bZ). \]
Since $S$ is local henselian and strictly contained in $Y$, 
we may argue by noetherian induction and assume that 
$H^1_{\et}(F \times S,\bZ) = 0$. 
Also, since $F \times \tilde{Y}$ is normal, we have 
$H^1_{\et}(F \times \tilde{Y},\bZ) = 0$.
Thus, it suffices to show that the pull-back map 
$\bZ(F \times \tilde{Y}) \to \bZ(F \times S')$
is surjective. As $F \times Z$ is connected for any connected 
scheme $Z$, the pull-back maps
$\bZ(\tilde{Y}) \to \bZ(F \times \tilde{Y})$
and $\bZ(S') \to \bZ(F \times S')$ are isomorphisms. 
Thus, we are reduced to checking the surjectivity of the pull-back 
map $\bZ(\tilde{Y}) \to \bZ(S')$. But this is established in 
\cite{Weibel} at the end of the proof of Theorem 2.5, p.~360.
\end{proof}

\begin{remarks}\label{rem:homotopy}
(i) The above proposition applies to any $G$-torsor for the 
\'etale topology. For example, under the assumptions
of Lemma \ref{lem:torus}, this yields an isomorphism
$H^1_{\et}(Y,\wT) \cong H^1_{\et}(X,\wT)$.

\smallskip

\noindent
{\rm (ii)} The assertion of that proposition also holds
for a morphism of schemes $f: X \to Y$ which is a universal
homeomorphism, in view of
\cite[Rem.~II.3.17, Rem.~II.1.6]{Milne}. In particular, 
we obtain isomorphisms 
$H^1_{\et}(X,\Lambda) \cong H^1_{\et}(X_{\red},\Lambda) 
\cong H^1_{\et}(X^+,\Lambda)$,
where $X_{\red} \subset X$ denotes the reduced subscheme,
and $X^+$ its normalization. These isomorphisms also follow 
from \cite[Thm.~7.6, Cor.~7.6.1]{Weibel}).
\end{remarks}

Next, we obtain a local version of Theorem \ref{thm:obs}. 
To motivate its statement, recall from \cite[Thm.~44.2]{Nagata} 
that the henselian local ring $\cO_{X,\bar{x}}$ is Nagata for any 
geometric point $\bar{x}$ of a Nagata scheme $X$. By using
\cite[Prop.~5.1, Prop.~5.2]{Greco-Traverso}, it follows that
$\cO_{X,\bar{x}}$ is seminormal if so is $\cO_{X,x}$. Also, recall 
that $H^1_{\et}(\Spec(\cO_{X,\bar{x}}),\bZ) = 0$ by
\cite[Thm.~2.5]{Weibel}.

\begin{lemma}\label{lem:locpic}
Let $A$ be a henselian local ring, and $X := \Spec(A)$. 

\smallskip

\noindent
{\rm (i)} If $G$ is split and $X$ is seminormal, then the 
pull-back map $\Pic(G) \to \Pic(G \times X)$ is an isomorphism.

\smallskip

\noindent
{\rm (ii)} If $G$ is linear and $X$ is geometrically seminormal, 
then $\Pic(G \times X)$ is killed by the stable exponent 
of $\Pic(G)$.
\end{lemma}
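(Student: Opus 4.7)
The plan for (i) is a two-stage reduction in the structure of $G$, combined with Lemma \ref{lem:torus} and Proposition \ref{prop:homotopy}. Writing $1 \to U \to G \to H \to 1$ with $U$ split unipotent and $H$ split reductive, the underlying variety of $G$ is isomorphic to $U \times H$, with $U$ isomorphic to affine space. Since $H \times X$ is seminormal by Lemma \ref{lem:des}, Lemma \ref{lem:affine} applied to the trivial affine bundle $G \times X \to H \times X$ yields $\Pic(G \times X) \cong \Pic(H \times X)$, and the same argument over $\Spec(k)$ gives $\Pic(G) \cong \Pic(H)$. We are thus reduced to the case where $G$ is split reductive.

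Now fix a Borel subgroup $B \subset G$ with split maximal torus $T$ and unipotent radical $U_B$, and factor $G \to G/B$ as $G \to G/U_B \to G/B$, where the first map is a $U_B$-torsor (hence a trivial affine bundle, as $U_B$ is split unipotent) and the second is a $T$-torsor. After base change by $X$, the smoothness of $G/U_B$ and Lemma \ref{lem:des} guarantee that $(G/U_B) \times X$ is connected and seminormal, so Lemma \ref{lem:affine} gives $\Pic(G \times X) \cong \Pic((G/U_B) \times X)$, and likewise $\Pic(G) \cong \Pic(G/U_B)$ over $\Spec(k)$. Lemma \ref{lem:torus} applied to the $T$-torsor $(G/U_B) \times X \to (G/B) \times X$ then yields an exact sequence
\[
\wT \stackrel{\gamma_X}{\longrightarrow} \Pic((G/B) \times X)
\longrightarrow \Pic((G/U_B) \times X)
\longrightarrow H^1_{\et}((G/B) \times X, \wT),
\]
and the analogous sequence over $\Spec(k)$ computes $\Pic(G/U_B)$ from $\Pic(G/B)$ and $H^1_{\et}(G/B, \wT)$. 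Since pull-back by $p_1 : (G/B) \times X \to G/B$ is compatible with both sequences, it suffices to show that the pull-back maps $\Pic(G/B) \to \Pic((G/B) \times X)$ and $H^1_{\et}(G/B, \wT) \to H^1_{\et}((G/B) \times X, \wT)$ are isomorphisms.

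For the \'etale $H^1$: Proposition \ref{prop:homotopy} applied to the smooth projective morphism $(G/B) \times X \to X$ (with geometrically connected fibers) reduces the statement to $H^1_{\et}(X, \wT) \cong H^1_{\et}(G/B, \wT)$; since $\wT$ is the constant sheaf $\bZ^{\rk T}$ and $X = \Spec(A)$ is henselian local, $H^1_{\et}(X, \bZ) = 0$ (the absolute Galois group of the residue field of $A$ is profinite, whereas $\bZ$ is torsion-free), and similarly $H^1_{\et}(G/B, \bZ) = 0$ as $G/B$ is a smooth projective rational variety. For the $\Pic$ comparison, the Leray spectral sequence for $(G/B) \times X \to X$ combined with $\Pic(X) = 0$ and the fact that the relative Picard sheaf is the constant sheaf $\Pic(G/B)$ (since $G$ is split and $G/B$ has the $k$-rational point $eB$) collapses to $\Pic((G/B) \times X) \cong \Pic(G/B)$ via $p_1^*$. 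I expect this last Leray analysis --- specifically, verifying that the transgression $d_2 : \Pic(G/B) \to \Br(X)$ vanishes --- to be the main technical point; one concludes by observing that $p_1^*$ provides a section of the edge map $\Pic((G/B) \times X) \to H^0_{\et}(X, R^1 p_{2*}\bG_m) = \Pic(G/B)$.

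Assertion (ii) follows from (i) by extension of scalars. Choose a finite field extension $k'/k$ over which $G$ splits; then $A' := A \otimes_k k'$ is a semilocal henselian $k'$-algebra whose local factors are seminormal by the geometric seminormality of $X$, and applying (i) to $G_{k'}$ and each local factor gives $\Pic(G_{k'} \times \Spec(A')) \cong \prod_i \Pic(G_{k'})$, which is killed by the exponent $m(G)$. By Lemma \ref{lem:tors}, the kernel of the pull-back $\Pic(G \times X) \to \Pic(G_{k'} \times \Spec(A'))$ is killed by $[k':k]$, so $\Pic(G \times X)$ is killed by $[k':k] \cdot m(G)$; taking the gcd of $[k':k]$ over all such splitting fields yields $d(G) \cdot m(G) = n(G)$, the stable exponent.
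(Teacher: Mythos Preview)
Your proof is correct and follows essentially the same route as the paper: factor $G \to G/U \to G/B$, apply Lemmas~\ref{lem:affine} and~\ref{lem:torus}, kill the $H^1_{\et}(\cdot,\wT)$ term via Proposition~\ref{prop:homotopy} together with the vanishing $H^1_{\et}(\Spec A,\bZ)=0$ for henselian local $A$, and identify $\Pic((G/B)\times X)$ with $\Pic(G/B)$ using that the Picard functor of $G/B$ is a constant group scheme. Your preliminary reduction to the reductive quotient $H$ is harmless but redundant (the paper takes a Borel of $G$ directly, whose unipotent part already contains the unipotent radical), and in part~(ii) you have simply spelled out the norm argument that the paper compresses into the phrase ``in view of Proposition~\ref{prop:tors}''.
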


\begin{proof}
(i) We may choose a pair $(T,B)$, where $T \subset G$ is a split 
maximal torus, and $B \subset G$ is a Borel subgroup containing 
$T$. This defines morphisms
\[ G \times X \stackrel{\varphi}{\longrightarrow} G/U \times X
\stackrel{f}{\longrightarrow} G/B \times X, \]
where $\varphi$ is a torsor under the unipotent part $U$ of $B$,
and $f$ is a torsor under $B/U \cong T$. Since $G$ is split,
so is $U$, since it is an extension of a maximal connected
unipotent subgroup of a split connected reductive group, by 
the unipotent radical of $G$ which is split as well. Thus, 
$\varphi$ is an affine bundle. Note that $G \times X$, 
$G/U \times X$ and $G/B \times X$ are seminormal by 
Lemma \ref{lem:des}. Hence $\varphi^*$ induces an isomorphism
$\Pic(G/U \times X) \cong \Pic(G \times X)$, 
in view of Lemma \ref{lem:affine}. Next, $G/U \times X$ is 
connected, since $X$ is connected and $G/U$ is geometrically 
connected. Thus, Lemma \ref{lem:torus} yields an exact sequence
\[ 
\wT \stackrel{\gamma}{\longrightarrow} \Pic(G/B \times X) 
\stackrel{f^*}{\longrightarrow} \Pic(G/U \times X)
\longrightarrow H^1_{\et}(G/B \times X,\wT). \]
Moreover, we have
$H^1_{\et}(G/B \times X,\wT) \cong 
H^1_{\et}(G/B \times X,\bZ) \otimes_{\bZ} \wT \cong
H^1_{\et}(X,\bZ) \otimes_{\bZ} \wT$
by Proposition \ref{prop:homotopy}; thus,
$H^1_{\et}(G/B \times X,\wT) = 0$ by \cite[Thm.~2.5]{Weibel}
again. Also, we have $\Pic(G/B \times X) \cong \Pic(G/B)$, 
since $X$ is local and the Picard functor of $G/B$ 
is representable by a constant group scheme. Thus, the above 
exact sequence reduces to
\[ \wT \stackrel{\gamma}{\longrightarrow} \Pic(G/B) 
\stackrel{\delta}{\longrightarrow} 
\Pic(G \times X) \longrightarrow 0, \]
where $\gamma$ is the characteristic homomorphism, and $\delta$
denotes the pull-back via the natural map $G \times X \to G/B$.
But the cokernel of $\gamma$ is isomorphic to $\Pic(G)$ in view
of the exact sequence (\ref{eqn:picG}).

(ii) By Remark \ref{rem:split} (v), we may choose a finite extension 
of fields $k'/k$ such that $G_{k'}$ is split. Then $A \otimes_k k'$ is
the product of finitely many seminormal henselian local rings. 
In view of (i), it follows that $\Pic(G \times X)_{k'})$ 
is the product of finitely many copies of $\Pic(G_{k'})$.
This yields the assertion in view of Proposition \ref{prop:tors}.
\end{proof}

\medskip

\noindent
{\sc Proof of Theorem \ref{thm:obs}.}

(i) In view of Lemmas \ref{lem:leray} and \ref{lem:locpic},
the map $\Pic(G) \to R^1p_{2*}(\bG_m)$ is an isomorphism
on stalks. Thus, $R^1p_{2*}(\bG_m)$ is the constant sheaf 
$\Pic(G)$; as a consequence, the composition of the maps 
$\Pic(G) \stackrel{p_1^*}{\longrightarrow} \Pic(G \times X) 
\longrightarrow H^0(X,R^1p_{2*}(\bG_m))$
is an isomorphism. So the assertion follows from the exact
sequence (\ref{eqn:direct}).

(ii) Let $n$ denote the stable exponent of $\Pic(G)$. Then
$\Pic(G \times \Spec(\cO_{X,\bar{x}}))$ is $n$-torsion for any
geometric point $\bar{x}$ of $X$, in view of Lemma \ref{lem:locpic}.
By Lemma \ref{lem:leray}, it follows that $R^1 p_{2*}(\bG_m)$ 
is $n$-torsion as well; hence so is the cokernel of $c \times p_2^*$,
in view of the exact sequence (\ref{eqn:direct}) again.

\section{Some applications}
\label{sec:appl}

\subsection{Linearization, algebraic equivalence and free abelian covers}
\label{subsec:alg}

We first show that linearizability of line bundles is preserved 
in an algebraic family:

\begin{proposition}\label{prop:alg}
Consider a split algebraic group $G$, a connected seminormal 
$G$-variety $X$, a smooth connected variety $Y$ and a line bundle
$L$ on $X \times Y$. Assume that the pull-back of $L$ to 
$X \times \{y \}$ is $G$-linearizable for some $y \in Y(k)$. 
Then $L$ is $G$-linearizable.
\end{proposition}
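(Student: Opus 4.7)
The plan is to apply Theorem \ref{thm:obs}(i) to the $G$-scheme $X \times Y$ (with $G$ acting through the first factor) and to read off the obstruction to linearizing $L$ from the resulting decomposition. The hypotheses of that theorem hold: $X \times Y$ is connected (both factors are), Nagata (being a variety over $k$), and seminormal by Lemma \ref{lem:des} applied to the smooth surjection $p_1 : X \times Y \to X$ (since $X$ is seminormal and $Y$ is smooth). Hence
\[ \Pic(G \times X \times Y) \;\cong\; \Pic(G) \,\oplus\, H^1_{\et}(X \times Y,\wG) \,\oplus\, \Pic(X \times Y), \]
and quotienting by the pull-back of $\Pic(X \times Y)$ identifies the obstruction group appearing in Proposition \ref{prop:long} with $\Pic(G) \oplus H^1_{\et}(X \times Y,\wG)$. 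By that proposition, $L$ is $G$-linearizable if and only if its obstruction class $\psi_{G,X \times Y}(L) = (a,b)$ vanishes in this group.

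To analyze $(a,b)$, I would pull back along the $G$-equivariant closed immersion $i_{y_0} : X \hookrightarrow X \times Y$, $x \mapsto (x,y_0)$. Under the identifications provided by Theorem \ref{thm:obs}(i) on $X$ and on $X \times Y$, this pull-back corresponds to $(\id,\, i_{y_0}^*) : \Pic(G) \oplus H^1_{\et}(X \times Y,\wG) \to \Pic(G) \oplus H^1_{\et}(X,\wG)$, as follows from Lemma \ref{lem:leray}(ii); combined with the compatibility of $\psi$ with equivariant pull-backs (Remark \ref{rem:comp}), it sends $(a,b)$ to $\psi_{G,X}(L|_{X \times \{y_0\}})$. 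The latter vanishes by hypothesis, so $a = 0$ and $i_{y_0}^* b = 0$. It then remains to promote $i_{y_0}^* b = 0$ to $b = 0$; equivalently, to prove that $i_{y_0}^* : H^1_{\et}(X \times Y,\wG) \to H^1_{\et}(X,\wG)$ is injective.

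This is the main step and, I expect, the only real obstacle. Since $i_{y_0}$ is a section of $p_1 : X \times Y \to X$, the injectivity of $i_{y_0}^*$ is equivalent to the bijectivity of $p_1^*$ on $H^1_{\et}(-,\wG)$, for which I would invoke Proposition \ref{prop:homotopy}. That proposition applies to the (trivial) locally trivial fibration $p_1$ with fibre $Y$, provided $Y$ is smooth (given) and geometrically connected. The latter is forced by the existence of the $k$-rational point $y_0$ on the connected variety $Y$: evaluation at $y_0$ sends the field of constants of $Y$ injectively into $k$, so that field equals $k$ and $Y$ is geometrically connected. Once this is in place, Proposition \ref{prop:homotopy} yields that $p_1^*$, and therefore its left inverse $i_{y_0}^*$, is an isomorphism on $H^1_{\et}(-,\wG)$; this forces $b = 0$, and combined with $a = 0$ gives $\psi_{G,X \times Y}(L) = 0$, so $L$ is $G$-linearizable by Proposition \ref{prop:long}.
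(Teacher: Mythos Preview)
Your proof is correct and follows essentially the same route as the paper's: both decompose the obstruction group via Theorem \ref{thm:obs}(i), pull back along the section at $y_0$ using Remark \ref{rem:comp} and Lemma \ref{lem:leray}(ii), and then invoke Proposition \ref{prop:homotopy} (after noting that the $k$-point forces $Y$ to be geometrically connected) to conclude that this pull-back is an isomorphism. You are in fact slightly more explicit than the paper in checking the hypotheses of Theorem \ref{thm:obs}(i) for $X \times Y$ (seminormality via Lemma \ref{lem:des}); one small presentational point is that the connectedness of $X \times Y$ already uses the geometric connectedness of $Y$, so that observation should come first.
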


\begin{proof}
By Proposition \ref{prop:long}, we have
$\psi_{G,X}(\id_X \times y)^*(L)= 0$ in 
$\Pic(G \times X)/p_2^*\Pic(X)$; also, note that
$\psi_{G,X}(\id_X \times y)^*(L) =
(\id_{G \times X} \times y)^* \psi_{G,X \times Y}(L)$
in view of Remark \ref{rem:comp}. 

Also, we have a commutative square
by Lemma \ref{lem:leray} and Theorem \ref{thm:obs}:
\[ \CD
\Pic(G) \times H^1_{\et} (X \times Y,\wG) 
@>{(\id_X \times y)^*}>> \Pic(G) \times H^1_{\et} (X,\wG) \\
@V{p_1^* \times c_{G,X \times Y}}VV @V{p_1^* \times c_{G,X}}VV \\
\Pic(G \times X \times Y)/p_{23}^*\Pic(X \times Y) 
@>{(\id_{G \times X} \times y)^*}>> \Pic(G \times X)/p_2^* \Pic(X), \\
\endCD \]
where the vertical arrows are isomorphisms. Moreover, $Y$ is 
geometrically connected, since it is connected and has a 
$k$-rational point (see \cite[Cor.~4.5.14]{EGA}).
By Proposition \ref{prop:homotopy}, it follows that the map 
$p_1^*: H^1_{\et} (X,\wG) \to H^1_{\et} (X \times Y,\wG)$
is an isomorphism. Since $\id_X \times y$ is a section of $p_1$,
we see that 
$(\id_X \times y)^*: H^1_{\et} (X \times Y,\wG) \to H^1_{\et} (X,\wG)$
is the inverse isomorphism. As a consequence, both horizontal
arrows in the above diagram are isomorphisms. Thus,
$\psi_{G,X \times Y}(L) = 0$, i.e., $L$ is linearizable.
\end{proof}

\begin{remarks}\label{rem:alg}
(i) The seminormality assumption cannot be suppressed in the
above proposition. Consider indeed the cuspidal curve $X$ with its
action of $\bG_m$ as in Example \ref{ex:cuspidal}, and the 
line bundle $L$ on $X \times \bA^1$ associated with the isomorphism
$\bG_a \cong \Pic^0(X)$. Denote by $L_t$ the pull-back of $L$
to $X \times \{ t\}$, where $t \in k$. Then $L_0$ is linearizable,
but $L_t$ is not for $t \neq 0$.

\smallskip

\noindent
(ii) The following variant of that proposition is obtained by 
similar arguments: let $X$ be a connected, geometrically seminormal 
variety equipped with an action of a connected linear algebraic group 
$G$. Let $Y$ be a smooth connected variety equipped with a $k$-rational
point $y$, and $L$ a line bundle on $X \times Y$ which pulls back 
to a $G$-linearizable line bundle on $X \times \{ y \}$. 
Then $L^{\otimes n}$ is $G$-linearizable, where $n$ denotes the stable 
exponent of $\Pic(G)$.
\end{remarks}

Next, we obtain a lifting property for actions of connected
algebraic groups:

\begin{proposition}\label{prop:lift}
Let $X$ be a $G$-variety, $\Lambda$ an \'etale sheaf of 
free abelian groups of finite rank on $\Spec(k)$, and $\pi : Y \to X$ 
a $\Lambda$-torsor. 

\smallskip

\noindent
{\rm (i)} There is a unique action of $G$ on $Y$ such 
that $\pi$ is equivariant; moreover, this action commutes with that 
of $\Lambda$. 

\smallskip

\noindent
{\rm (ii)} The scheme $Y$ is a union of closed $G$-stable 
subvarieties $Y_i$ such that $\pi$ restricts to finite 
surjective maps $\pi_i : Y_i \to X$ for all $i$, and every $Y_i$ meets 
only finitely many $Y_j$'s. If in addition $\Lambda$ is constant,
then we may take for $Y_i$ the $\Lambda$-translates of some closed
$G$-stable subvariety $Z \subset Y$.
\end{proposition}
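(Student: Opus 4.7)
My plan for part (i) is to use Proposition~\ref{prop:homotopy} to transport the action of $G$ across the $\Lambda$-torsor. The second projection $p_2 : G \times X \to X$ is a locally trivial fibration with smooth, geometrically connected fibers, so by Proposition~\ref{prop:homotopy} the pull-back $p_2^* : H^i_{\et}(X,\Lambda) \to H^i_{\et}(G \times X,\Lambda)$ is an isomorphism for $i = 0, 1$, with inverse $(e_G \times \id_X)^*$. Writing $\alpha = p_2 \circ \Phi$ with $\Phi(g,x) := (g, g \cdot x)$ an automorphism of $G \times X$, the pull-back $\alpha^*$ is also an isomorphism; since it is a right inverse of $(e_G \times \id_X)^*$, it must coincide with $p_2^*$. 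Hence the $\Lambda$-torsors $\alpha^* Y$ and $G \times Y = p_2^* Y$ on $G \times X$ are isomorphic, and such an isomorphism amounts to a $\Lambda$-equivariant morphism $\beta : G \times Y \to Y$ satisfying $\pi \circ \beta = \alpha \circ (\id_G \times \pi)$. The set of such isomorphisms is a torsor under $\Lambda(G \times X) = \Lambda(X)$ (using Proposition~\ref{prop:homotopy} for $i = 0$), so the normalization $\beta(e_G, y) = y$ pins down a unique $\beta$. Associativity is then automatic: the two morphisms $\beta \circ (\id_G \times \beta)$ and $\beta \circ (m \times \id_Y)$ on $G \times G \times Y$ both cover $\alpha \circ (m \times \id_X)$ and are $\Lambda$-equivariant, hence differ by an element of $\Lambda(G \times G \times X) = \Lambda(X)$, which vanishes upon evaluation at $(e_G, e_G, y)$. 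Commutation with the $\Lambda$-action is built into the $\Lambda$-equivariance of the chosen isomorphism.

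For part (ii), I would first treat the case of constant $\Lambda = \bZ^r$. Let $\eta : \tilde X \to X$ be the normalization, which inherits a $G$-action by the universal property. Pulling back the torsor gives $\tilde Y := Y \times_X \tilde X \to \tilde X$; since each connected component of $\tilde X$ is normal, \cite[Prop.~7.4, Thm.~7.5]{Weibel} yields $H^1_{\et}(\tilde X, \bZ) = 0$, and tensoring with $\Lambda$ shows $H^1_{\et}(\tilde X, \Lambda) = 0$, so $\tilde Y \cong \Lambda \times \tilde X$. Set $\tilde X_\lambda := \{\lambda\} \times \tilde X \subset \tilde Y$ and let $Z_\lambda \subset Y$ be its (closed) scheme-theoretic image under the finite morphism $\tilde Y \to Y$. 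Because $\tilde X_\lambda \cong \tilde X \to X$ is finite surjective and factors as $\tilde X_\lambda \to Z_\lambda \to X$, the restriction $\pi|_{Z_\lambda} : Z_\lambda \to X$ is also finite and surjective. For $G$-stability, the diagonal $G$-action on $\tilde Y$ commutes with $\Lambda$, so in the trivialization it takes the form $g \cdot (\lambda, \tilde x) = (\lambda + h(g, \tilde x), g \cdot \tilde x)$ for some locally constant $h : G \times \tilde X \to \Lambda$; since $G$ is connected (so $G \times \tilde X^{(j)}$ is connected for each component $\tilde X^{(j)}$ of $\tilde X$) and $h(e_G, -) = 0$, we obtain $h \equiv 0$. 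Thus each $\tilde X_\lambda$, and hence each $Z_\lambda$, is $G$-stable. Setting $Z := Z_0$, the $\Lambda$-translates satisfy $Z_\lambda = \lambda \cdot Z$ and $Y = \bigcup_\lambda Z_\lambda$.

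The finite-intersection property and the non-constant case follow by further arguments. For $\lambda \neq \lambda'$, $Z_\lambda \cap Z_{\lambda'} \subset \pi^{-1}(C)$ where $C \subset X$ is the non-normal locus (the support of the conductor of $\eta$); the finiteness of the fibers of $\eta$ above $C$ then forces each $Z_\lambda$ to meet only finitely many $Z_{\lambda'}$. For general $\Lambda$, I would choose a finite Galois extension $k'/k$ with group $\Gamma$ such that $\Lambda_{k'}$ is constant, apply the constant case over $k'$ to produce subvarieties $Z^{k'}_\lambda \subset Y_{k'}$, and then descend $\Gamma$-orbits: for each finite $\Gamma$-orbit $F \subset \Lambda(k')$, the union $\bigcup_{\lambda \in F} Z^{k'}_\lambda$ is $\Gamma$-stable and therefore descends via Galois descent to a closed $G$-stable subvariety $Y_F \subset Y$ that is finite and surjective over $X$, and these cover $Y$. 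The main obstacle I anticipate lies in the finite-intersection statement: even though each $\tilde X_\lambda$ is of finite type while $\tilde Y$ is not, one must control carefully the local combinatorics of the gluing along the conductor to ensure only finitely many $\lambda'$ contribute for each fixed $\lambda$; this ultimately rests on the finiteness of the fibers of $\eta$ but requires some care in bookkeeping.
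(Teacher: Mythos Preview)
Your argument for part (i) is correct and follows the same strategy as the paper: use Proposition~\ref{prop:homotopy} to identify $\alpha^*$ with $p_2^*$ on $H^1_{\et}(-,\Lambda)$, normalize the resulting torsor isomorphism along $\{e_G\}\times X$, and kill the associativity obstruction by the $i=0$ case of the same proposition. Your observation that commutation with $\Lambda$ is automatic from the $\Lambda$-equivariance of the torsor isomorphism is a genuine simplification over the paper, which checks this by a separate argument; likewise, by invoking Proposition~\ref{prop:homotopy} directly for general $\Lambda$ you avoid the paper's reduction to the constant case via Galois descent.

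Part (ii) also follows the paper's route (pull back to the normalization $\tilde X$, trivialize the torsor there, push slices down via $\tau:\tilde Y\to Y$, then Galois-descend), and your explicit verification that each slice $\{\lambda\}\times\tilde X$ is $G$-stable is exactly what underlies the paper's appeal to ``equivariant triviality''. The one real gap is the finite-intersection claim. Your heuristic---that $Z_\lambda\cap Z_{\lambda'}\subset\pi^{-1}(C)$ and that fibers of $\eta$ over $C$ are finite---is a pointwise statement and does not by itself bound the \emph{set} of $\lambda'$ meeting a fixed $Z_\lambda$: different points of $Z_\lambda$ could in principle see different finite collections of $\lambda'$, and you have not ruled out that their union is infinite. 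The clean fix (and this is what the paper does) bypasses the conductor entirely: since $\eta$ is finite, so is $\tau$; hence $\tau^{-1}(Z_0)$ is finite over $Z_0$ and therefore of finite type, in particular quasi-compact. As $\tilde Y=\coprod_{\lambda}\{\lambda\}\times\tilde X$ with each slice open and closed, $\tau^{-1}(Z_0)$ meets only finitely many slices. But $Z_0\cap Z_{\lambda'}\neq\emptyset$ exactly when $\tau^{-1}(Z_0)$ meets $\{\lambda'\}\times\tilde X$, so you are done. No bookkeeping along the conductor is needed.
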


\begin{proof}
(i) Recall that the isomorphism classes of $\Lambda$-torsors over $X$ are 
in bijection with $H^1_{\et}(X,\Lambda)$ (see 
\cite[Prop.~III.4.6, Rem.~III.4.8]{Milne}). 

We consider first the case where $\Lambda$ is constant, and argue
as in the proof of Lemma \ref{lem:lin}.
By Proposition \ref{prop:homotopy}, the pull-back map 
$p_2^* : H^1_{\et}(X,\Lambda) \to H^1_{\et}(G \times X,\Lambda)$
is an isomorphism. It follows that the torsor $\alpha^*(\pi)$ 
(obtained from $\pi$ by pull-back via $\alpha : G \times X \to X$) 
is isomorphic to $p_2^*(\eta)$ for some $\Lambda$-torsor 
$\eta: Z \to X$. Pulling back via $e_G \times \id_X$ yields that 
$\eta \cong \pi$, i.e., $\alpha^*(\pi) \cong p_2^*(\pi)$. 
This means that there exists a morphism $\beta: G \times Y \to Y$ 
such that the diagram
\[ \CD
G \times Y @>{\beta}>> Y \\
@V{\id_G \times \pi}VV @V{\pi}VV \\
G \times X @>{\alpha}>> X \\
\endCD \]
commutes. In particular, the map $y \mapsto \beta(e_G,y)$
is an automorphism of $Y$ which lifts the identity of $X$, 
i.e., which sits in $\Aut_X(Y) \cong \Lambda(X)$. This
defines a morphism $\lambda : X \to \Lambda$ such that
$\beta(e_G, y) = \lambda(\pi(y))  y$ identically. Replacing
$\beta$ with $((- \lambda) \circ \pi) \beta$, we may assume that
$\beta(e_G,y) = y$ identically. Then the obstruction to the
associativity of $\beta$ is an automorphism of the $\Lambda$-torsor 
$\id_{G \times G} \times \pi : 
G \times G \times Y \to G \times G \times X$, 
i.e., a morphism $\varphi : G \times G \times X \to \Lambda$. 
Moreover, $\varphi(g, e_G, x) = 0 = \varphi(e_G, g, x)$ identically, 
and hence $\varphi = 0$ since $G$ is connected. 
Thus, $\beta$ is associative. 

To show that $\beta$ is unique, note that any two lifts of $\alpha$ 
differ by an automorphism of the $\Lambda$-torsor $p_2^*(\pi)$,
i.e., by a morphism $f: G \times X \to \Lambda$. Since both 
lifts pull back to the identity on $\{ e_G \} \times X$, we see 
that $f$ restricts to $0$ on $\{ e_G \} \times X$; thus, $f =0$.

To show that $\beta$ commutes with the action of 
$\Lambda$, fix $\lambda \in \Lambda$ and consider the morphism
\[ \psi : G \times Y \longrightarrow Y \times Y, \quad
(g,y) \longmapsto (y, \lambda g ( - \lambda) g^{-1} \cdot y) . \]
Clearly, the image of $\psi$ is contained in $Y \times_X Y$.
Since the latter is isomorphic to $\Lambda \times Y$ via
$(\mu,y) \mapsto (y, \mu \cdot y)$, this yields a map
$ \gamma : G \times Y \to \Lambda$ such that
$ g \lambda g^{-1} ( - \lambda) \cdot y = \gamma(g,y) \cdot y$
identically. It follows that $\gamma(e_G,y) = 0$
identically, and hence that $\gamma = 0$. 

Next, we consider an arbitrary $\Lambda$. We may then choose 
a finite Galois extension of fields $k'/k$ such that 
$\Lambda_{k'}$ is constant. Form the cartesian square
\[ \CD
Y_{k'} @>{\psi}>> Y \\
@V{\pi'}VV @V{\pi}VV \\
X_{k'} @>{\varphi}>> X. \\
\endCD \]
Then the $G$-action on $X$ lifts canonically to an action of 
$G_{k'}$ on $X_{k'}$; in turn, that action lifts uniquely to
an action of $G_{k'}$ on $Y_{k'}$ by the first step. The latter
action is equivariant under the Galois group of $k'/k$ by 
uniqueness, and hence descends to the desired action of $G$ on $Y$.

(ii) Again, we consider first the case where $\Lambda$ is 
constant. Denote by $\eta : \tilde{X} \to X$ the normalization
and form the cartesian square
\[ \CD
\tilde{Y} @>{\tau}>> Y \\
@V{\tilde{\pi}}VV @V{\pi}VV \\
\tilde{X} @>{\eta}>> X.   
\endCD \]
The $\Lambda$-torsor $\tilde{\pi}$ is trivial by Lemma 
\ref{lem:fin}; in view of Proposition \ref{prop:lift},
it follows that $\tilde{\pi}$ is equivariantly trivial
for the action of $\Lambda \times G$. In other words, we may 
choose a closed $G$-stable subvariety 
$\tilde{Z} \subset \tilde{Y}$ such that $\tilde{\pi}$ restricts
to an isomorphism $\tilde{Z}  \to \tilde{X}$, and the natural map
$\Lambda \times \tilde{Z} \to \tilde{Y}$ is an isomorphism. So 
$Y = \bigcup_{\lambda \in \Lambda} \; \lambda \cdot \tau(\tilde{Z})$;
moreover, $\tau(\tilde{Z})$ is closed in $Y$ (since 
$\tau$ is finite), of finite type, and $G$-stable. As 
the restriction $\eta \circ \tilde{\pi} : \tilde{Z} \to X$
is finite and surjective, so is $\pi : \tau(\tilde{Z}) \to X$.
Finally, since $\tau$ is finite, 
$\tau^{-1} \tau(\tilde{Z})$ meets only finitely many 
$\Lambda$-translates of $\tilde{Z}$. Equivalently, $\tau(\tilde{Z})$ 
meets only finitely many translates $\lambda \cdot \tau(\tilde{Z})$, 
where $\lambda \in \Lambda$. Thus, $Z := \tau(\tilde{Z})$ satisfies 
the assertion.

We now handle the general case, where $\Lambda$ is not necessarily
constant. Choose a finite Galois extension of fields $k'/k$
such that $\Lambda_{k'}$ is constant. By the above step, 
$Y_{k'}$ is a locally finite union of closed $G_{k'}$-stable  
subvarieties $Y'_i$, finite and surjective over $X_{k'}$; 
we may further assume that the $Y'_i$ are stable under
the Galois group. Then we may take for $Y_i$ the image of $Y'_i$
under the natural map $Y_{k'} \to Y$.
\end{proof}

We may now show that the obstruction to linearizability vanishes
by passing to a suitable free abelian cover:

\begin{theorem}\label{thm:lin}
Let $X$ be a $G$-variety. Assume either that $G$ is split and 
$X$ is seminormal, or that $G$ is linear and $X$ is geometrically 
seminormal. Then any line bundle $L$ on $X$ defines a $\wG$-torsor 
$\pi : Y \to X$ such that $\pi^*(L^{\otimes n})$ is 
$G$-linearizable, where $n$ denotes the stable exponent of $\Pic(G)$.
\end{theorem}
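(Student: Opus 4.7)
The plan is to use Theorem \ref{thm:obs}(ii) to locate a $\wG$-torsor class $\xi\in H^1_{\et}(X,\wG)$ such that $c_{G,X}(\xi)$ represents the obstruction to linearizing $L^{\otimes n}$, and then to trivialize that obstruction by pulling back along the corresponding torsor. Indeed, Theorem \ref{thm:obs}(ii) asserts that the cokernel of $c_{G,X}\times p_2^*:H^1_{\et}(X,\wG)\times\Pic(X)\to \Pic(G\times X)$ is killed by $n$; passing to the quotient by $p_2^*\Pic(X)$ and using that $\psi_{G,X}(L^{\otimes n})=n\,\psi_{G,X}(L)$, there exists $\xi$ with $c_{G,X}(\xi)=\psi_{G,X}(L^{\otimes n})$ in $\Pic(G\times X)/p_2^*\Pic(X)$. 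Let $\pi:Y\to X$ be a $\wG$-torsor of class $\xi$.

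By Proposition \ref{prop:lift}, $Y$ carries a unique $G$-action making $\pi$ equivariant, and decomposes as a locally finite union of closed $G$-stable subvarieties finite and surjective over $X$. Since $\pi$ is \'etale, $Y$ is a reduced locally noetherian $G$-scheme, so the exact sequence of Proposition \ref{prop:long} applies to $Y$ and the $G$-linearizability of $\pi^*(L^{\otimes n})$ is equivalent to the vanishing of $\psi_{G,Y}(\pi^*(L^{\otimes n}))$ in $\Pic(G\times Y)/p_{2,Y}^*\Pic(Y)$. The naturality squares of Remark \ref{rem:comp} (applied to the $G$-equivariant map $\pi$) and of Lemma \ref{lem:leray}(ii) give
\[
\psi_{G,Y}(\pi^*(L^{\otimes n}))=(\id_G\times\pi)^*\psi_{G,X}(L^{\otimes n})= (\id_G\times\pi)^* c_{G,X}(\xi)= c_{G,Y}(\pi^*\xi)
\]
in the relevant quotient. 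But $\pi^*\xi=0$ in $H^1_{\et}(Y,\wG)$, because $Y\times_X Y\to Y$ admits the diagonal as a tautological section and is therefore a trivial $\wG$-torsor. Hence the obstruction vanishes and $\pi^*(L^{\otimes n})$ is $G$-linearizable.

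The main subtlety is that $Y$ is typically not of finite type over $k$: as soon as $\wG$ has positive rank, $\pi$ is \'etale with infinite fibers, and in particular Theorem \ref{thm:obs} cannot be invoked on $Y$ itself. The argument is arranged precisely to circumvent this: on the $Y$-side we use only Proposition \ref{prop:long} together with the naturality statements for $\psi$ and $c$, all of which require nothing more than $Y$ being a reduced $G$-scheme, while Proposition \ref{prop:lift}(ii) furnishes the geometric description of $Y$ as a locally finite union of varieties, giving the conclusion a meaningful geometric content.
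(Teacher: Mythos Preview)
Your argument is correct and follows the same approach as the paper: locate $\xi\in H^1_{\et}(X,\wG)$ with $c_{G,X}(\xi)=\psi_{G,X}(L^{\otimes n})$ via Theorem~\ref{thm:obs}, lift the $G$-action to the corresponding torsor $\pi:Y\to X$ via Proposition~\ref{prop:lift}, and then use the naturality of $\psi$ (Remark~\ref{rem:comp}) and of $c$ (Lemma~\ref{lem:leray}(ii)) together with $\pi^*\xi=0$ to kill the obstruction on $Y$. One small imprecision: you invoke only Theorem~\ref{thm:obs}(ii), which assumes $X$ geometrically seminormal; for the first alternative in the hypothesis ($G$ split and $X$ merely seminormal) you must instead use Theorem~\ref{thm:obs}(i), which gives that $\Pic(G\times X)/p_2^*\Pic(X)\cong \Pic(G)\times H^1_{\et}(X,\wG)$ on each connected component, whence $n\,\psi(L)$ lands in the image of $c$ since $\Pic(G)$ is $n$-torsion.
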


\begin{proof}
Consider the obstruction class 
$\psi(L^{\otimes n}) \in \Pic(G \times X)/p_2^* \Pic(X)$. 
By Theorem \ref{thm:obs}, there exists a $\wG$-torsor 
$\pi : Y \to X$ such that $\psi(L^{\otimes n})$ is the image 
of $c([\pi]) \in \Pic(G \times X)$, where 
$[\pi] \in H^1_{\et}(X,\wG)$ denotes the isomorphism class of 
$\pi$. Also, the $G$-action on $X$ lifts to an action on $Y$
in view of Proposition \ref{prop:lift}. Since the pull-back
torsor of $\pi$ under itself is trivial, and $c$ is
compatible with pull-backs by Lemma \ref{lem:leray}, 
we see that $(\id_G \times \pi)^*(\psi(L^{\otimes n})) = 0$ 
in $\Pic(G \times Y)/p_2^*\Pic(Y)$. As $\psi$ is compatible 
with pull-backs as well, this means that 
$\psi(\pi^*(L^{\otimes n})) =0$. So Proposition 
\ref{prop:long} yields the desired statement.
\end{proof}

\begin{remarks}\label{rem:univ}
(i) Given a variety $X$, there exists a universal torsor 
$f : \tilde{X} \to X$ among torsors over $X$ with
structure group a free abelian group $\Lambda$ of finite 
rank. Indeed, these torsors are classified by
the free abelian group of finite rank
\[ H^1_{\et}(X, \Lambda) \cong 
H^1_{\et}(X, \bZ) \otimes_{\bZ} \Lambda \cong
\Hom_{\bZ}(H^1_{\et}(X,\bZ)^{\vee}, \Lambda), \]
where $H^1_{\et}(X, \bZ)^{\vee} := 
\Hom_{\bZ}(H^1_{\et}(X,\bZ), \bZ)$. Thus, the assertion
follows from Yoneda's lemma. (When $k$ is algebraically closed, 
$H^1_{\et}(X,\bZ)^{\vee}$ may be viewed as the largest free abelian 
quotient of the ``enlarged fundamental group'' of 
\cite[Exp.~X, \S 6]{SGA3}.)

If $X$ is a seminormal $G$-variety, where $G$ is split, then 
$f^*(L^{\otimes n})$ admits a $G$-linearization for any line 
bundle $L$ on $X$, as follows from Theorem \ref{thm:lin}.

\smallskip

\noindent
(ii) In characteristic $0$, the assumption of (geometric) 
seminormality cannot be omitted in Theorem \ref{thm:lin}, 
as shown by the example of the cuspidal curve $X$ with
its action of $\bG_a$ as in Example \ref{ex:cuspidal}.
But in characteristic $p > 0$, that theorem may be extended
to varieties with arbitrary singularities, as we shall see
in Subsection \ref{subsec:prime}.
\end{remarks}

\subsection{Local properties of linear algebraic group actions}
\label{subsec:lpaga}

\begin{definition}\label{def:qp}
Consider a $G$-variety $X$. 

We say that $X$ is \emph{$G$-quasiprojective} if it admits
a $G$-equivariant (locally closed) immersion into the 
projectivization of a finite-dimensional $G$-module.
Equivalently, $X$ admits an ample $G$-linearized line bundle. 

We say that $X$ is \emph{locally $G$-quasiprojective} 
(for the \'etale topology) if it admits an \'etale open 
covering $(f_i : U_i \to X)_{i \in I}$, where the $U_i$ are 
$G$-quasiprojective varieties, and the $f_i$ are $G$-equivariant.
\end{definition}

Note that if $X$ is a locally $G$-quasiprojective variety on which 
$G$ acts effectively, then $G$ must be linear, since it acts 
effectively on some projective space.

\begin{theorem}\label{thm:locqp}
Let $G$ be a connected linear algebraic group, and $X$ 
a quasi-projective, geometrically seminormal $G$-variety. 
Then $X$ is locally $G$-quasiprojective.
\end{theorem}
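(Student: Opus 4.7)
My plan is to apply Theorem \ref{thm:lin} to an ample line bundle $L$ on the quasi-projective variety $X$, obtaining a $\wG$-torsor $\pi : Y \to X$, a positive integer $n$ (the stable exponent of $\Pic(G)$), and a $G$-linearization of $\pi^*(L^{\otimes n})$; by Proposition \ref{prop:lift}(i) the $G$-action on $X$ lifts uniquely to $Y$ and commutes with the $\wG$-action. Since $\wG$ is represented by an \'etale group scheme over $k$, the torsor $\pi$ is \'etale, so the goal reduces to covering $X$ by the images of $G$-stable, quasi-compact, $G$-quasiprojective open subvarieties of $Y$.

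By Proposition \ref{prop:lift}(ii), one has $Y = \bigcup_{i} Y_i$ with each $Y_i$ a closed $G$-stable subvariety, $\pi|_{Y_i}$ finite and surjective, and each $Y_i$ meeting only finitely many others. The naive choice of the $\pi|_{Y_i}$ as the desired \'etale cover does not work, because $Y_i$ is generally not open in $Y$: already for the nodal cubic with its natural $\bG_m$-action, $Y$ is an infinite chain of $\bP^1$'s glued at their nodes and each $Y_i$ is one component, so $\pi|_{Y_i}$ is the (non-\'etale) normalization. Instead, for each finite nonempty subset $J$ of the index set, I would consider
\[ U_J := Y \setminus \bigcup_{i \notin J} Y_i \;\subseteq\; W_J := \bigcup_{i \in J} Y_i. \]

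The bulk of the argument will be to verify that each $U_J$ is a quasi-compact $G$-stable open subvariety of $Y$ that is $G$-quasiprojective, with $\pi|_{U_J}$ \'etale. Openness amounts to showing $\bigcup_{i \notin J} Y_i$ is closed, which follows from local finiteness of the family $\{Y_i\}$: any $y \in Y$ lies in some $Y_i$, hence in only finitely many $Y_j$'s, and since each $Y_k$ disjoint from $Y_i$ is closed and does not contain $y$, no such $Y_k$ has $y$ as a limit point, so a neighborhood of $y$ meets only finitely many $Y_i$. The map $W_J \to X$ is finite: properness descends along the surjective finite morphism $\bigsqcup_{i \in J} Y_i \to W_J$ from the finite map $\bigsqcup_{i \in J} Y_i \to X$, and its fibers are finite. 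Consequently $\pi^*(L^{\otimes n})|_{W_J}$ is ample by finite pullback, and its restriction to the open $U_J$ remains ample and $G$-linearized (restricting the linearization from $Y$), giving the $G$-quasiprojectivity of $U_J$. \'Etaleness of $\pi|_{U_J}$ is immediate, as the restriction of an \'etale morphism to an open subscheme.

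Coverage of $X$ is then straightforward: for any $y \in Y$, the finite set $J(y) := \{i : y \in Y_i\}$ satisfies $y \in U_{J(y)}$, so as $y$ ranges over $Y$ the images $\pi(U_{J(y)})$ exhaust $X$ (using surjectivity of $\pi$). The main obstacle I anticipate is the point-set-topological step of upgrading the condition ``each $Y_i$ meets only finitely many $Y_j$'s'' to genuine local finiteness of $\{Y_i\}$ in $Y$, which is what makes $U_J$ open; the descent argument for finiteness of $W_J \to X$ is the other place where one must be careful.
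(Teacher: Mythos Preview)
Your approach is correct and coincides with the paper's: your $W_{J(y)}$ is exactly the paper's $Y_y$, and your $U_{J(y)}$ is an explicit description of the open $G$-stable neighborhood of $y$ in $Y_y$ that the paper invokes when it asserts that $\pi_y := \pi\vert_{Y_y}$ is \'etale at $y$ and hence over a $G$-stable neighborhood. The local-finiteness issue you flag is the very same implicit step in the paper's claim that ``$Y_y$ contains an open neighborhood of $y$ in $Y$''; it can be settled by going back to the \emph{proof} of Proposition~\ref{prop:lift}(ii), where one sees a finite surjection $\tau:\tilde Y\to Y$ with $\tilde Y$ a disjoint union of pieces mapping onto the $Y_i$, so that any quasi-compact open $V\subset Y$ has quasi-compact preimage $\tau^{-1}(V)$ meeting only finitely many components, whence $V$ meets only finitely many $Y_i$.
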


\begin{proof}
Choose an ample line bundle $L$ on $X$. By Theorem \ref{thm:lin}, 
we may assume (after possibly replacing $L$ with a positive power) 
that $\pi^*(L)$ is $G$-linearizable for some $\wG$-torsor 
$\pi : Y \to X$. Let $(Y_i)$ be a collection of closed $G$-stable
subvarieties of $Y$ satisfying the finiteness assertions of
Proposition \ref{prop:lift} (ii). For any $y \in Y$, 
denote by $Y_y$ the (finite) union of those $Y_i$'s that contain
$y$. Then $Y_y$ is a closed $G$-stable subvariety of $Y$, finite 
over $X$; as a consequence, $L$ pulls back to an ample 
$G$-linearizable line bundle on $Y_y$. Moreover, $Y_y$ contains
an open neighborhood of $y$ in $Y$. Thus, the restriction
$\pi_y : Y_y \to X$ is \'etale at $y$, and hence over a $G$-stable
neighborhood of $y$. This yields the desired covering.
\end{proof}

Note again that the geometric seminormality assumption cannot 
be omitted in the above theorem when $\char(k) = 0$; yet the
statement will be extended to all varieties in prime 
characteristics. For split tori, we now obtain a sharper result:

\begin{theorem}\label{thm:locaf}
Let $G$ be a split torus. Then every quasiprojective $G$-variety
$X$ admits a finite \'etale $G$-equivariant cover $f : X' \to X$,
where $X'$ is the union of open affine $G$-stable subvarieties. 
We may take $X' = Y/n\wG$ for some $\wG$-torsor $Y \to X$ and
some positive integer $n$.
\end{theorem}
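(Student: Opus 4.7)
The plan is to combine Theorem~\ref{thm:lin} with Sumihiro's classical affine-cover theorem for normal varieties with split torus action, via a saturation step that descends the cover from the normalisation of $X'$ back to $X'$ itself. I will work under the implicit assumption that $X$ is geometrically seminormal, needed to apply Theorem~\ref{thm:lin}.

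I would first apply Theorem~\ref{thm:lin} to an ample line bundle $L$ on $X$: since $G=\bG_m^r$ is a split torus we have $\Pic(G)=0$ and stable exponent $1$, so there is a $\wG$-torsor $\pi:Y\to X$ with $\pi^{*}L$ being $G$-linearisable. By Proposition~\ref{prop:lift}, $G$ acts on $Y$ commuting with $\wG$, and $Y=\bigcup_{\lambda\in\wG}\lambda\cdot Z$ for a single closed $G$-stable subvariety $Z$ that is finite and surjective over $X$. For any positive integer $n$, setting $X'=Y/n\wG$ produces a torsor under the finite group $\wG/n\wG$, hence a finite \'etale $G$-equivariant cover $X'\to X$ of the form asserted in the statement.

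To produce the required $G$-stable affine open cover of $X'$, I would pass to the normalisation $\tilde\eta:\tilde X'\to X'$, which is $G$-equivariant with $\tilde X'$ normal and quasi-projective. Sumihiro's theorem yields a cover $\{\tilde V_\alpha\}$ of $\tilde X'$ by $G$-stable affine opens. A $G$-stable affine open $\tilde V\subset\tilde X'$ descends to a $G$-stable open $V\subset X'$ precisely when $\tilde V$ is $\tilde\eta$-saturated (a union of $\tilde\eta$-fibres), in which case $V$ is affine by Chevalley's theorem on descent of affineness along finite surjective morphisms.

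The main obstacle will be choosing $n$ so that the Sumihiro opens on $\tilde X'$ can be arranged to be $\tilde\eta$-saturated and still cover $X'$. The key input is that, by Proposition~\ref{prop:lift}(ii), the fibres of $\tilde\eta$ over the non-normal locus of $X'$ carry a $\wG/n\wG$-periodic combinatorial structure inherited from the decomposition $Y=\bigcup_\lambda\lambda\cdot Z$; as $n$ grows, this should give enough combinatorial flexibility that every point of $X'$ lies in a saturated $G$-stable affine neighbourhood. The prototype is the nodal curve (Example~\ref{ex:nodal}), where $X'$ is a cycle of $n$ copies of $\bP^1$ and for $n\ge 2$ the complements $X'\setminus(\{p_1,\ldots,p_n\}\setminus\{p_j\})$, for $j=1,\ldots,n$, give a $G$-stable affine cover. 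The general argument should reduce to a combinatorial statement of this flavour on the $\wG/n\wG$-periodic fibres, with $n$ chosen large enough to avoid the forbidden pairings coming from the non-normal locus.
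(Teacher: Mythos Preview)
Your overall architecture matches the paper's (Theorem~\ref{thm:lin} for the torsor, Proposition~\ref{prop:lift} for the decomposition $Y=\bigcup_\lambda \lambda\cdot Z$, then pass to $X'=Y/n\wG$), but the construction of the affine cover has a genuine gap. The plan ``apply Sumihiro to $\tilde X'$ and arrange the resulting opens to be $\tilde\eta$-saturated'' does not work as stated: Sumihiro's $G$-stable affine opens on $\tilde X'$ are essentially never $\tilde\eta$-saturated, and the saturation $\tilde\eta^{-1}\tilde\eta(\tilde V)$ of an open $\tilde V$ is in general not open. Your own prototype already shows this. For the nodal curve, $\tilde X'$ is $n$ disjoint copies of $\bP^1$; a Sumihiro open on one component, say $\bP^1_i\setminus\{\infty_i\}$, contains $0_i$, whose $\tilde\eta$-fibre also contains the isolated point $\infty_{i-1}$ on a \emph{different} component, so the saturation is not open. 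The affine cover you actually exhibit on $X'$ (complements of all nodes but one) is not obtained by saturating Sumihiro opens; it is a separate construction whose general form you have not supplied. There is also a circularity in the choice of $n$, since $\tilde X'$ and $\tilde\eta$ themselves depend on $n$.

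The paper avoids this by building the opens directly in $Y$, \emph{before} choosing $n$. For $z\in Z$ let $Z_z$ be the union of those translates $\chi\cdot Z$ containing $z$; it is closed, $G$-stable, finite over $X$, hence $G$-quasiprojective via $\pi^*L$. Then $Z_z^0 := Z_z \setminus \bigcup_{z\notin \chi\cdot Z}(Z_z\cap\chi\cdot Z)$ is an \emph{open} $G$-stable $G$-quasiprojective neighbourhood of $z$ in $Y$. Only finitely many $Z_z^0$ occur, and each meets only finitely many of its $\wG$-translates, so one may choose $n$ so that every $Z_z^0$ is disjoint from its $n\wG$-translates; then each $Z_z^0$ embeds openly in $Y/n\wG$, giving a $G$-quasiprojective open cover of $X'$, and Sumihiro's affine-cover theorem (applied to these opens, not to a normalization) finishes the seminormal case. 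Finally, Theorem~\ref{thm:locaf} carries no seminormality hypothesis: the paper runs the argument on the seminormalization $X^+$ (where Theorem~\ref{thm:lin} is available) and then descends the affine opens from $Y^+/n\wG$ to $Y/n\wG$ via Chevalley's theorem, using that the seminormalization map is a universal homeomorphism.
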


\begin{proof}
We adapt the proof of Theorem \ref{thm:locqp}. Choose $L$ and 
$\pi : Y \to X$ as in that proof. By Proposition \ref{prop:lift} (ii), 
there exists a closed $G$-subvariety $Z \subset Y$, finite and 
surjective over $X$, such that $Y$ is the union of the translates 
$\chi \cdot Z$, where $\chi \in \wG$, and $Z$ meets only finitely 
many such translates. 

For any $z \in Z$, denote by $Z_z$ the (finite) union of the 
translates $\chi \cdot Z$ which contain $z$. Then $Z_z$ is a closed
$G$-stable neighborhood of $z$ in $Y$, finite and surjective over 
$X$, and hence $G$-quasiprojective (since $L$ pulls back to 
an ample $G$-linearized line bundle on $Z_z$). Next, let $Z_z^0$ be the
complement in $Z_z$ of the (finitely many) intersections 
$Z_z \cap \chi \cdot Z$, where $z \notin \chi \cdot Z$. Then
$Z_z^0$ is an open $G$-quasiprojective neighborhood of $z$ in $Y$.
Moreover, there are only finitely many such subvarieties $Z_z^0$, 
where $z \in Z$, and each of them meets only finitely many of its
translates. Thus, we may choose a positive integer $n$ such that
each $Z_z^0$ is disjoint from its translates $\chi \cdot Z_z^0$,
where $\chi \in n \wG$. Then the quotient $Y/n\wG$ is finite and
\'etale over $X$, and the natural map $Z_z^0 \to Y/n\wG$ 
is an open immersion for any $z \in Z$. So the $G$-variety 
$Y/n\wG$ is the union of $G$-quasiprojective open subvarieties,
images of the $\chi \cdot Z_z^0$ for $\chi \in \wG$.

If $X$ is seminormal, then the statement follows,
since every $G$-quasiprojective variety is a union of 
open affine $G$-stable subvarieties (see e.g. the proof of 
\cite[Cor.~2]{Sumihiro}). 

In the general case, we consider the seminormalization
$\sigma : X^+ \to X$. By Lemma \ref{lem:ac}, the action of $G$ 
on $X$ lifts uniquely to an action on $X^+$ such that $\sigma$ 
is equivariant. Also, the pull-back $\sigma^*(L)$ is ample.
Thus, there exists a $\wG$-torsor $\pi^+ : Y^+ \to X^+$ 
such that $(\pi^{+})^*(L)$ is $G$-linearizable. Since the
pull-back map $H^1_{\et}(X,\wG) \to H^1_{\et}(X^+, \wG)$
is an isomorphism (see Remark \ref{rem:homotopy} (ii)), 
we have a cartesian square
\[ \CD
Y^+ @>{\tau}>> Y \\
@V{\pi^+}VV @V{\pi}VV \\
X^+ @>{\sigma}>> X,   
\endCD \]
where $\pi$ is a $\wG$-torsor, and $\tau$ the 
seminormalization. Moreover, $G$ acts on $Y$ and $Y^+$ 
so that $\tau$, $\pi$ and $\pi^+$ are equivariant 
(as follows from Proposition \ref{prop:lift}). 

Choose a positive integer $n$ as in the first step of the proof.
Then we have a cartesian square
\[ \CD
Y^+/n\wG @>{\tau_n}>> Y/n\wG \\
@V{\pi^+_n}VV @V{\pi_n}VV \\
X^+ @>{\sigma}>> X,   
\endCD \]
where $\pi_n^+ $ and $\pi_n$ are finite \'etale, and $Y^+/n\wG$ 
is the union of affine $G$-stable open subvarieties $U_i^+$. 
Since $\tau_n$ is a universal homeomorphism, the image 
$\tau_n(U_i^+) =: U_i$ is open in $Y$ for any $i$, and satisfies
$\tau_n^{-1}(U_i) = U_i^+$. Thus, $U_i$ is affine in view of a theorem 
of Chevalley (see \cite[Thm.~II.6.7.1]{EGA}). So $\pi_n$ is the 
desired morphism.
\end{proof}

\begin{remarks}\label{rem: }
(i) We do not know whether the quasi-projectivity assumption is 
necessary in Theorems \ref{thm:locqp} and \ref{thm:locaf}. 
This assumption can be omitted for a normal $G$-variety $X$, 
since every point admits a Zariski open quasi-projective 
$G$-stable neighborhood (see \cite[Lem.~8]{Sumihiro}, 
\cite[Thm.~3.8]{Sumihiro-II}, \cite[Thm.~1.1]{KKLV}). 
The proof of that result relies on properties of divisors on 
normal varieties with an algebraic group action, which do not 
extend readily to the seminormal setting.

\smallskip

\noindent
(ii) With the notation and assumptions of Theorem \ref{thm:locaf},
the variety $X'$ is not necessarily $G$-quasiprojective. For
example, if $X$ is the nodal curve with its action of $\bG_m$ as
in Example \ref{ex:nodal}, and $f : X' \to X$ has degree $n$,
then $X'$ is a cycle of projective lines $X_1,\ldots, X_n$
as follows e.g. from \cite[Exp.~X, Ex.~6.4]{SGA3}. Moreover,
$\bG_m$ acts on $X_i$ so that the point at infinity is
identified with the origin of $X_{i+1}$ for any $i$ modulo $n$.
Thus, $X'$ admits no ample $\bG_m$-linearized line bundle $L$
(otherwise, the weight of the fiber of $L$ at each $0_i$ would be
greater than the weight at $\infty_i$, with an obvious notation;
but this is impossible, since $0_n = \infty_0$).
\end{remarks}

\subsection{The case of prime characteristic}
\label{subsec:prime}

In this subsection, we assume that $k$ has characteristic $p > 0$,
and show how to extend the results of Subsections \ref{subsec:alg} 
and \ref{subsec:lpaga} to $G$-varieties with arbitrary singularities.
We will need two preliminary results, probably known but for which 
we could not find an appropriate reference.

We say that a morphism of schemes $f : X' \to X$ is 
\emph{subintegral} if $f$ is integral, bijective, and induces 
isomorphisms on all residue fields. When $X = \Spec(A)$ and
$X' = \Spec(A')$, this corresponds to the notion of subintegral
ring extension considered in \cite{Swan}.

\begin{lemma}\label{lem:rad}
Let $A \subset A'$ be a finite subintegral extension of noetherian
rings of prime characteristic $p$. Then $A'^{p^m} \subset A$ for 
$m \gg 0$, where $A'^{p^m} := \{ x^{p^m} ~\vert~ x \in A' \}$.
\end{lemma}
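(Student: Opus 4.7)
The plan is to reduce the lemma, via the Traverso--Swan structure theorem for subintegral extensions, to iterated \emph{elementary} subintegral steps $A_i \subset A_i[b]$ with $b^2, b^3 \in A_i$, for which the characteristic-$p$ freshman's dream $(u+v)^p = u^p + v^p$ makes the conclusion immediate. First I would reduce to bounding $p$-powers of a finite generating set: since $A'$ is finite over $A$, it is finitely generated as an $A$-algebra, so write $A' = A[y_1, \ldots, y_r]$; every $x = \sum c_\alpha y^\alpha \in A'$ with $c_\alpha \in A$ satisfies $x^{p^m} = \sum c_\alpha^{p^m} y_1^{p^m \alpha_1} \cdots y_r^{p^m \alpha_r}$ in characteristic $p$, so a uniform bound $y_j^{p^m} \in A$ for every $j$ already forces $A'^{p^m} \subset A$.

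Next I would invoke Swan's characterization \cite{Swan}: any element subintegral over $A$ lies in a subring of the form $A[b_1, \ldots, b_n]$ in which each step is \emph{elementary subintegral}, i.e.\ $b_{i+1}^2$ and $b_{i+1}^3$ lie in $A[b_1, \ldots, b_i]$. Applying this to each generator $y_j$ and concatenating the finite towers produced, I obtain a single finite chain $A = A_0 \subset A_1 \subset \cdots \subset A_n$ of elementary subintegral extensions with $A' \subset A_n$. The key elementary computation is that $A_{i+1}^p \subset A_i$: setting $b := b_{i+1}$, since $b^2 \in A_i$ the ring $A_{i+1}$ equals $A_i + A_i b$, every element has the form $a + a'b$ with $a, a' \in A_i$, and in characteristic $p$
\[
(a + a'b)^p \;=\; a^p + (a')^p\, b^p,
\]
with $b^p \in A_i$ because $b^n = b^2 \cdot b^{n-2} \in A_i$ for every $n \geq 2$ by induction from the base cases $n = 2, 3$.

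Iterating along the chain will then give $A_n^{p^k} \subset A_{n-k}$ for each $0 \leq k \leq n$, so that $A'^{p^n} \subset A_n^{p^n} \subset A_0 = A$ and we may take $m = n$. The main obstacle is the invocation of the structure theorem: I need to know that finiteness of $A'$ as an $A$-module genuinely packages the subintegrality into a \emph{finite} elementary tower (rather than a filtered union of such towers), and that no hidden reducedness hypothesis restricts Swan's statement in the way we need. Once that finite elementary tower is in hand, the freshman's-dream identity closes the argument in one line per step.
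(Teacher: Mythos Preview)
Your proof is correct and takes a genuinely different route from the paper's argument. The paper does \emph{not} invoke Swan's structure theorem; instead it first reduces to the case where $A$ and $A'$ are reduced (using that the nilradical $N'$ of $A'$ is nilpotent, so $(A'/N')^{p^m}\subset A/N$ implies $A'^{p^{m'}}\subset A$ for $m'\gg 0$), and then argues by Noetherian induction via the conductor: the conductor ideal $I$ is nonzero because $A$ and $A'$ agree at generic points, the conductor square is cartesian, so it suffices to prove $(A'/I)^{p^m}\subset A/I$, and one finishes by induction on the strictly smaller ring $A/I$.

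Your approach is more structural and yields an explicit bound: $m$ is the length of the elementary tower, with one application of Frobenius per step. The paper's approach is entirely self-contained and parallels the conductor-square techniques used elsewhere in the article (e.g., in the Units--Pic arguments and in Proposition~\ref{prop:homotopy}). Regarding the obstacle you flag: Swan's characterization of subintegral extensions as filtered unions of finite elementary towers is stated in \cite{Swan} for arbitrary commutative rings, with no reducedness hypothesis; and the finiteness of the concatenated tower is secured exactly as you outline, since if $A\subset A_{n_1}\subset A'$ with both $A\subset A_{n_1}$ and $A\subset A'$ subintegral, then $A_{n_1}\subset A'$ is again subintegral (the conditions on $\Spec$ and residue fields are immediate from the tower), so Swan's result can be applied iteratively from each intermediate ring to the next generator.
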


\begin{proof}
Denote by $N \subset A$ the ideal consisting of nilpotent elements,
and define likewise $N' \subset A'$. Then $A/N \subset A'/N'$ is a
finite subintegral extension as well. If 
$(A'/N')^{p^m} \subset A/N$, then of course $A'^{p^m} \subset A + N'$,
and hence $A'^{p^{m'}} \subset A$ for $m' \gg 0$, since $N'$ is 
nilpotent. Thus, we may assume that $A$, $A'$ are reduced.

Consider the conductor $I \subset A$, i.e., the annihilator of
the (finite) $A$-module $A'/A$. Then $I \neq 0$, as
$(A'/A)_{\eta} = 0$ for any generic point $\eta$ of $A$. 
Since the conductor square
\[ \CD
A @>>> A' \\
@VVV @VVV \\
A/I @>>> A'/I \\
\endCD \]
is cartesian, it suffices to show that $(A'/I)^{p^m} \subset A/I$
for $m \gg 0$. Note that $A/I \subset A'/I$ is again a finite 
subintegral extension of (possibly nonreduced) noetherian rings
of characteristic $p$. Thus, we may conclude by Noetherian
induction.
\end{proof}

\begin{lemma}\label{lem:ker}
Let $f : X' \to X$ be a finite subintegral morphism of 
noetherian schemes. Then the kernel and cokernel of 
$f^* : \Pic(X) \to \Pic(X')$ are killed by a power of $p$.
\end{lemma}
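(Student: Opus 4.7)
The plan is to construct a morphism $g : X \to X'$ that factors a power of the absolute Frobenius both through $f$ and in the other direction, then use the fact that in characteristic $p$ the absolute Frobenius pulls a line bundle back to its $p$-th tensor power.

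First I would produce $g$ globally. Since $X$ is noetherian and $f$ is finite (hence affine), I cover $X$ by finitely many affine opens $U_i = \Spec(A_i)$ with $f^{-1}(U_i) = \Spec(A_i')$; each $A_i \subset A_i'$ is a finite subintegral extension. Lemma \ref{lem:rad} gives an integer $m_i$ with $A_i'^{p^{m_i}} \subset A_i$, and taking $m$ to be the maximum of the finitely many $m_i$ yields a uniform inclusion $A_i'^{p^m} \subset A_i$ for all $i$. Because $f$ is integral, bijective, and an isomorphism on residue fields, it is a universal homeomorphism, so I may identify the underlying topological spaces of $X$ and $X'$ and view $f^\#$ as a subsheaf inclusion $\cO_X \hookrightarrow \cO_{X'}$. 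The $p^m$-th power map on $\cO_{X'}$ then lands in $\cO_X$ by the local statement, giving a sheaf homomorphism $\cO_{X'} \to \cO_X$, $a \mapsto a^{p^m}$. This corresponds to a morphism of schemes $g : X \to X'$. The local formulas glue because the Frobenius is defined by a universal formula, so no compatibility check is needed beyond the inclusion established above.

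By construction, on any affine open the composition $f \circ g$ is the ring map $A \to A' \to A$, $a \mapsto a^{p^m}$, and $g \circ f$ is the ring map $A' \to A \to A'$, $a' \mapsto a'^{p^m}$. Hence $f \circ g = F_X^m$ and $g \circ f = F_{X'}^m$, where $F^m$ denotes the $m$-fold iterate of the absolute Frobenius. Since pulling back a line bundle with transition functions $\{g_{ij}\}$ along the absolute Frobenius replaces them by $\{g_{ij}^p\}$, one has $(F_X^m)^* = p^m \cdot \id$ on $\Pic(X)$ and likewise $(F_{X'}^m)^* = p^m \cdot \id$ on $\Pic(X')$. Therefore
\[
g^* \circ f^* = p^m \cdot \id_{\Pic(X)}, \qquad
f^* \circ g^* = p^m \cdot \id_{\Pic(X')}.
\]
The first identity forces $\ker(f^*)$ to be killed by $p^m$; the second shows that every class in $\Pic(X')$ has a $p^m$-multiple in $\im(f^*)$, so $\Coker(f^*)$ is killed by $p^m$ as well.

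The main obstacle is the globalization of $g$ from the affine pieces: it is what requires both the noetherian hypothesis on $X$ (to reduce to finitely many $m_i$) and the universal homeomorphism property of $f$ (to make sense of the $p^m$-th power map as a morphism $\cO_{X'} \to \cO_X$ of sheaves on the common underlying space). Once $g$ is constructed, the Pic computation is entirely formal.
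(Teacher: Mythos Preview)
Your Frobenius-factorization approach is sound in spirit and, once completed, gives a more direct argument than the paper's. But there is a gap at the outset: you assert that on each affine open the structure map is an inclusion $A_i \subset A_i'$, and later that $f^\#$ is a subsheaf inclusion $\cO_X \hookrightarrow \cO_{X'}$. Neither holds in general. The paper's definition of a subintegral morphism (integral, bijective, isomorphisms on residue fields) does not force $f^\#$ to be injective: take $X = \Spec(k[t]/t^2)$, $X' = \Spec(k)$, and $f$ the inclusion of the reduced point. This $f$ is finite, bijective, and an isomorphism on residue fields, yet the ring map $k[t]/t^2 \to k$ kills $t$. In such a situation Lemma~\ref{lem:rad} does not apply as stated (its hypothesis is an inclusion $A \subset A'$), and the formula ``$a' \mapsto a'^{p^m}$'' does not define a map into $A$ in any meaningful sense, so your construction of $g$ breaks down.

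The fix is to reduce first to the case where both $X$ and $X'$ are reduced; then surjectivity of $f$ on spectra forces each $A_i \to A_i'$ to be injective, and your construction of $g$ goes through verbatim. This reduction is exactly the first paragraph of the paper's own proof: pass to $X'_{\red}$ and then to $X_{\red}$, using the standard d\'evissage showing that $\Pic(Z) \to \Pic(Z_{\red})$ has $p$-power-torsion kernel and cokernel for any noetherian $\bF_p$-scheme $Z$. After this common reduction the two proofs diverge. The paper works with the short exact sequence $0 \to \cO_X^* \to f_*(\cO_{X'}^*) \to \cC \to 0$, identifies $\Ker(f^*)$ and $\Coker(f^*)$ with subquotients of the cohomology of $\cC$, and shows that $\cC$ is $p^m$-torsion via Lemma~\ref{lem:rad}. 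Your route instead manufactures an explicit $g^*$ with $g^* f^* = f^* g^* = p^m \cdot \id$, which is shorter and sidesteps the vanishing of $R^1 f_*(\cO_{X'}^*)$; but it needs the same preliminary reduction you omitted.
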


\begin{proof}
Consider the reduced subscheme $X'_{\red}\subset X'$. 
By a standard d\'evissage (see e.g. \cite[\S 6]{Oort}),
the kernel and cokernel of the pull-back map 
$\Pic(X') \to \Pic(X'_{\red})$ are killed by a power of $p$.
Thus, it suffices to prove the assertion in the case where 
$X'$ is reduced. Then $f$ factors through a morphism
$X' \to X_{\red}$; by a similar reduction, we may also
assume that $X$ is reduced as well. Thus, the natural map 
$\cO_X \to f_*(\cO_{X'})$ is injective. 

Note that $f^* : \Pic(X) \to \Pic(X')$ 
is the composition of the natural maps
\[ \Pic(X) = H^1_{\Zar}(X,\cO_X^*) 
\stackrel{\varphi}{\longrightarrow} H^1_{\Zar}(X,f_*(\cO_{X'}^*))
\stackrel{\psi}{\longrightarrow} H^1_{\Zar}(X',\cO_{X'}^*) = \Pic(X'). \]
Moreover, $\psi$ is an isomorphism by the Leray spectral sequence
and the vanishing of $R^1 f_*(\cO_{X'}^*)$ 
(for the latter, see \cite[Cor.~IV.21.8.2]{EGA}). 
Thus, $\Ker(f^*) = \Ker(\varphi)$ and 
$\Coker(f^*) \cong \Coker(\varphi)$. Also, we have an exact sequence
\[ 0 \longrightarrow \cO_X^* \longrightarrow f_*(\cO_{X'}^*) 
\longrightarrow \cC \longrightarrow 0 \]
for some sheaf $\cC$ on $X$; this yields a surjective map
$H^0(X,\cC) \to \Ker(f^*)$ and an injective map 
$\Coker(f^*) \to H^1(X,\cC)$. As a consequence, it suffices to 
show that $\cC$ is killed by a power of $p$. 

Since $X$ has a Zariski open covering by finitely many affine
schemes, and $f$ is affine, we may assume that $X$, $X'$ are 
affine. Then the extension $\cO(X) \subset \cO(X')$ 
satisfies the assumptions of Lemma \ref{lem:rad}, and hence 
there exists $m > 0$ such that $\cO(X')^{p^m} \subset \cO(X)$. 
It follows that $(\cO_{X',x'})^{p^m} \subset \cO_{X,f(x')}$ 
for any $x' \in X'$. Since $f$ is bijective, we conclude that 
$\cC$ is killed by $p^m$.
\end{proof}

We may now obtain a variant of Lemma \ref{lem:locpic} without any
seminormality assumption:

\begin{lemma}\label{lem:locpicp}
Let $A$ be a henselian local ring, and $X := \Spec(A)$. 

\smallskip

\noindent
{\rm (i)} If $G$ is split, then the kernel and cokernel of the
pull-back map $\Pic(G) \to \Pic(G \times X)$ are killed by 
$p^m$ for $m \gg 0$.

\smallskip

\noindent
{\rm (ii)} If $G$ is linear, then $\Pic(G \times X)$ is killed 
by $n p^m$ for $m \gg 0$, where $n$ denotes the stable exponent 
of $\Pic(G)$.
\end{lemma}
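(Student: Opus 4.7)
The strategy is to reduce Lemma \ref{lem:locpicp} to Lemma \ref{lem:locpic} by passing to the seminormalization of $X$ and controlling the discrepancy via Lemma \ref{lem:ker}. The characteristic-$p$ hypothesis is essential here, since it is exactly what makes Lemma \ref{lem:rad} and hence Lemma \ref{lem:ker} applicable.

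For part (i), I first reduce to the case where $A$ is reduced. A standard d\'evissage (as used at the start of the proof of Lemma \ref{lem:ker}) shows that $\Pic(G \times X) \to \Pic(G \times X_{\red})$ has kernel and cokernel killed by a power of $p$; note $G = G_{\red}$ since $G$ is smooth, and $(G \times X)_{\red} = G \times X_{\red}$. Assuming $A$ reduced, I let $\sigma : X^+ \to X$ be the seminormalization. Since $A$ is Nagata and henselian local, $A^+$ is a finite $A$-algebra; because $\sigma$ is bijective, $A^+$ is local, and as a finite extension of a henselian local ring it is itself henselian local. The scheme $X^+$ is seminormal, so Lemma \ref{lem:locpic}(i) supplies an isomorphism $p_1^* : \Pic(G) \to \Pic(G \times X^+)$. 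I then consider the base change $\tau := \id_G \times \sigma : G \times X^+ \to G \times X$. This morphism is finite (base change of finite) and subintegral: bijectivity is inherited from $\sigma$ as a universal homeomorphism, and the residue field extensions remain trivial at every point because the fiber of $\tau$ over a point of $G \times X$ lying above $x \in X$ is naturally identified with the fiber of $\sigma$ over $x$. Lemma \ref{lem:ker} then gives that the kernel and cokernel of $\tau^* : \Pic(G \times X) \to \Pic(G \times X^+)$ are killed by some $p^m$. A short diagram chase on $\Pic(G) \stackrel{p_1^*}{\longrightarrow} \Pic(G \times X) \stackrel{\tau^*}{\longrightarrow} \Pic(G \times X^+)$, whose composition is an isomorphism, shows that $p_1^*$ is injective and that its cokernel is killed by $p^m$.

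For part (ii), I adapt the reduction used in the proof of Lemma \ref{lem:locpic}(ii). Choose a finite extension $k'/k$ over which $G_{k'}$ becomes split (Remark \ref{rem:split}(v)). Then $A \otimes_k k'$ is a finite product of henselian local Nagata rings of characteristic $p$, to each of which part (i) just proved applies. Hence $\Pic((G \times X)_{k'})$ differs from a finite product of copies of $\Pic(G_{k'})$ by a group killed by a power of $p$; since $\Pic(G_{k'})$ has exponent $m(G)$, the entire group is annihilated by $m(G)\, p^{m'}$ for some $m'$. Finally, Lemma \ref{lem:tors} shows that the kernel of the pull-back $\Pic(G \times X) \to \Pic((G \times X)_{k'})$ is killed by $[k':k]$, so $\Pic(G \times X)$ is annihilated by $[k':k]\, m(G)\, p^{m'} = n\, p^{m'}$, as required.

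The principal obstacle is verifying that $\id_G \times \sigma$ is subintegral, so that Lemma \ref{lem:ker} applies directly to $\tau$; while finiteness and bijectivity are automatic, one must argue that the residue field extensions remain trivial after base-changing by $G$, which comes down to describing points of $G \times X^+$ as points of $G$ over a point of $X^+$ and using triviality of the extensions along $\sigma$. A secondary subtlety is that reducedness of $A$ is not part of the hypothesis and so requires a separate d\'evissage along the nilradical; this is harmless in prime characteristic, since the nilpotents are killed by a power of $p$.
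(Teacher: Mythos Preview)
Your proof is correct and follows the same strategy as the paper: reduce to the seminormal case via Lemma~\ref{lem:ker} applied to $\id_G \times \sigma$, then invoke Lemma~\ref{lem:locpic}. The paper streamlines your subintegrality argument for $\tau$ by citing \cite[Prop.~5.1]{Greco-Traverso}, which identifies $\id_G \times \sigma$ directly as the seminormalization of $G \times X$; your separate reduction to the reduced case is then unnecessary (though harmless), since Lemma~\ref{lem:ker} already absorbs that d\'evissage.

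One small slip in part~(ii): the equality $[k':k]\, m(G)\, p^{m'} = n\, p^{m'}$ fails in general, since $n = d\, m(G)$ with $d$ the \emph{gcd} of the degrees of splitting fields, whereas a single $[k':k]$ is only a multiple of $d$. To land on $n p^m$ rather than $[k':k]\, m(G)\, p^{m'}$, choose finitely many splitting fields whose degrees have gcd $d$, take a common exponent $M$ dominating the $m'$ from each, and then take the gcd of the resulting annihilators, exactly as in the proof of Proposition~\ref{prop:tors}; this is what the paper means by ``as in the proof of Lemma~\ref{lem:locpic}.''
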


\begin{proof}
(i) Consider the seminormalization $\sigma : X^+ \to X$;
then $\id_G \times \sigma : G \times X^+ \to G \times X$ 
is the seminormalization by \cite[Prop.~5.1]{Greco-Traverso}.
In view of Lemma \ref{lem:ker}, it follows that the kernel 
and cokernel of the pull-back map 
$\Pic(G \times X) \to \Pic(G \times X^+)$ are killed by $p^m$
for $m \gg 0$. Also, the pull-back map 
$\Pic(G) \to \Pic(G \times X^+)$ is an isomorphism by Lemma
\ref{lem:locpic}. This yields the statement.

(ii) follows from (i) as in the proof of Lemma \ref{lem:locpic}.
\end{proof}

By arguing as in the proof of Theorem \ref{thm:obs}, we see that
Lemma \ref{lem:locpicp} implies the following variant of 
that theorem:

\begin{theorem}\label{thm:obsp}
Let $X$ be a noetherian scheme.

\smallskip

\noindent
{\rm (i)} If $G$ is split, then the kernel and cokernel 
of the map  
\[ p_1^* \times c \times p_2^*:  
\Pic(G) \times H^1_{\et}(X,\wG) \times \Pic(X) \longrightarrow 
\Pic(G \times X) \]
are killed by $p^m$ for $m \gg 0$.

\smallskip

\noindent
{\rm (ii)} If $G$ is linear, then the cokernel of the map
\[ c \times p_2^* : H^1_{\et}(X,\wG) \times \Pic(X) 
\longrightarrow \Pic(G \times X) \]
is killed by $n p^m$ for $m \gg 0$, where $n$ denotes 
the stable exponent of $\Pic(G)$.
\end{theorem}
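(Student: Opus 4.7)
My plan is to adapt the proof of Theorem~\ref{thm:obs} line-by-line, substituting Lemma~\ref{lem:locpicp} for Lemma~\ref{lem:locpic} at the crucial step that analyzes $R^1 p_{2*}(\bG_m)$. First I would reduce to the case that $X$ is reduced: the closed immersion $X_{\red}\hookrightarrow X$ is a nilpotent thickening, hence a universal homeomorphism, so Remark~\ref{rem:homotopy}(ii) gives an isomorphism on $H^1_{\et}(-,\wG)$, while standard d\'evissage along the nilpotent sheaf of ideals ensures that the kernel and cokernel of the pull-back maps $\Pic(X)\to\Pic(X_{\red})$ and $\Pic(G\times X)\to\Pic(G\times X_{\red})$ are both killed by a uniform power of $p$.

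With $X$ reduced, Lemma~\ref{lem:leray}(i) supplies the exact sequence
\[
0 \longrightarrow H^1_{\et}(X,\wG)\times\Pic(X) \stackrel{c\times p_2^*}{\longrightarrow} \Pic(G\times X) \longrightarrow H^0_{\et}(X, R^1 p_{2*}(\bG_m)),
\]
and Lemma~\ref{lem:leray}(iii) identifies the stalk $R^1 p_{2*}(\bG_m)_{\bar x}$ with $\Pic(G\times\Spec\cO_{X,\bar x})$, with the map out of the constant sheaf $\Pic(G)$ corresponding to $p_1^*$. For $G$ split, Lemma~\ref{lem:locpicp}(i) says $p_1^*$ has $p$-power-torsion kernel and cokernel on each stalk. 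Granted a uniform exponent $m$, one mimics Theorem~\ref{thm:obs}(i): the sheaf map $\Pic(G)\to R^1 p_{2*}(\bG_m)$ has kernel and cokernel killed by $p^m$, which, combined with the exact sequence above, gives assertion~(i). For part~(ii), I would choose a finite extension $k'/k$ splitting $G$, apply~(i) to $X_{k'}$, and descend using Lemma~\ref{lem:tors} together with Proposition~\ref{prop:tors} to absorb $[k':k]$ into the stable exponent $n$, yielding the bound $n p^m$.

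The main obstacle is obtaining a \emph{uniform} exponent $m$ valid at every geometric point, since Lemma~\ref{lem:locpicp} a priori produces an $m$ depending on the henselian local ring in question. I would handle this globally by passing to the seminormalization $\sigma: X^+\to X$: being finite and subintegral, Lemma~\ref{lem:ker} yields a single power of $p$ that kills the kernel and cokernel of $\sigma^*$ on $\Pic(X)$ and on $\Pic(G\times X)$, while Remark~\ref{rem:homotopy}(ii) preserves $H^1_{\et}(-,\wG)$ under $\sigma$. Theorem~\ref{thm:obs}(i) applied to the seminormal scheme $X^+$ then transfers back to $X$ along $\sigma$ at the cost of only a uniform factor of $p^m$, which is exactly what the statement claims.
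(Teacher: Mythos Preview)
Your proposal is correct and rests on the same idea as the paper: compare $X$ to its seminormalization via Lemma~\ref{lem:ker} and then invoke the already-established Theorem~\ref{thm:obs}. The paper's one-line proof does this \emph{locally}, packaging the comparison into Lemma~\ref{lem:locpicp} (seminormalize each strict henselization) and then arguing on stalks of $R^1p_{2*}(\bG_m)$ exactly as in Theorem~\ref{thm:obs}; you instead seminormalize $X$ \emph{globally} and apply Theorem~\ref{thm:obs} to $X^+$ directly. Your route has the merit of making the uniform exponent $m$ explicit --- a single application of Lemma~\ref{lem:ker} to $\sigma:X^+\to X$ yields one $m$ that works everywhere --- whereas the paper's sketch leaves this uniformity implicit. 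Your reduction to $X$ reduced is also a necessary step (Lemma~\ref{lem:leray}(i) needs it) that the paper does not spell out.

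Two minor points. First, for part~(ii) your descent from $X_{k'}$ via Lemma~\ref{lem:tors} is a little telegraphic: the cleanest way to finish is to observe that the global seminormalization bound gives a uniform $m$ in Lemma~\ref{lem:locpicp}(ii) at every geometric point, so the sheaf $R^1p_{2*}(\bG_m)$ is $np^m$-torsion and the argument of Theorem~\ref{thm:obs}(ii) goes through verbatim. Second, both your global seminormalization and the paper's local one need $\sigma$ to be \emph{finite} in order to invoke Lemma~\ref{lem:ker}, which implicitly requires $X$ Nagata rather than merely noetherian; this is harmless for the applications to varieties but is worth flagging given the stated hypothesis.
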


Using Theorem \ref{thm:obsp}, we may extend the results of Subsections
\ref{subsec:alg} and \ref{subsec:lpaga} to arbitrary $G$-varieties,
provided that all the line bundles under consideration are replaced 
with their $p^m$th powers for $m \gg 0$. For example, 
every quasi-projective $G$-variety is locally $G$-quasiprojective 
when $G$ is arbitrary, and is $G$-quasiprojective when $G$ has 
trivial character group.

\bigskip

\noindent
{\bf Acknowledgements}. 

I thank St\'ephane Druel, H\'el\`ene Esnault, St\'ephane Guillermou, 
Marc Levine, Tam\'as Szamuely and Chuck Weibel for very helpful 
discussions or e-mail exchanges. Thanks are also due to the referee
for valuable comments.

Most of this article was written during a stay at the Centro de 
Investigaci\'on for Matem\'atica in December 2013. I thank CIMAT 
for providing excellent working conditions, and Leticia Brambila 
and Claudia Reynoso for their invitation and kind hospitality.

\end{document}